\newcommand{\bu}{\boldsymbol u}
\newcommand{\bv}{\boldsymbol v}
\newcommand{\bw}{\boldsymbol w}
\newcommand{\bb}{\boldsymbol b}
\newcommand{\be}{\boldsymbol e}
\newcommand{\by}{\boldsymbol y}
\newcommand{\bs}{\boldsymbol s}
\newcommand{\bg}{\boldsymbol g}
\newcommand{\bff}{\boldsymbol f}
\newtheorem{Theorem}{Theorem}
\newtheorem{lema}{Lemma}
\newcounter{remark}
\def\theremark {\arabic{remark}}
\newenvironment{remark}{\refstepcounter{remark}\par\noindent{\bf Remark\ \theremark}\ }{\par}
\newtheorem{Proof}{Proof}
\newenvironment{proof}{\begin{Proof}\rm}{\hfill $\Box$ \end{Proof}}
\title{ Robust error bounds for the Navier-Stokes equations using implicit-explicit second order BDF method with variable steps
}
\author{Bosco
Garc\'{\i}a-Archilla\thanks{Departamento de Matem\'atica Aplicada
II, Universidad de Sevilla, Sevilla, Spain. Research is supported by
Spanish MCINYU under grants PGC2018-096265-B-I00 and PID2019-104141GB-I00 (bosco@esi.us.es)}
  \and Julia Novo\thanks{Departamento de
Matem\'aticas, Universidad Aut\'onoma de Madrid, Spain. Research is supported
by Spanish MINECO
under grants PID2019-104141GB-I00 and VA169P20  (julia.novo@uam.es)}}
\date{\today}
\begin{document}
\maketitle
\abstract{This paper studies fully discrete finite element approximations to the Navier-Stokes equations using inf-sup stable elements and grad-div stabilization. For
the time integration two implicit-explicit second order backward differentiation formulae (BDF2) schemes are applied. In both the laplacian is implicit while the nonlinear term is explicit, in the first one, and semi-implicit, in the second one. The grad-div
stabilization allow us to prove error bounds in which the constants are independent of inverse powers of the viscosity. Error bounds of order $r$ in space
are obtained for the $L^2$ error of the velocity using piecewise polynomials of degree $r$ to approximate the velocity together with second order bounds 
in time, both for fixed time step methods and for methods with variable time steps. A CFL-type condition is needed  for the method
in which the nonlinear term is explicit relating  time step and spatial mesh sizes parameters. 
}
\bigskip

{\bf Keywords:} Incompressible Navier-Stokes equations; Variable step BDF2; Implicit-explicit methods;
Grad-div stabilization; Robust error Bounds
\section{Introduction}
The numerical simulation of the Navier-Stokes equations is still a challenge in which there are several questions that deserve some research. In the
present paper we try to contribute to the following two aspects. On the one hand, we consider the possibility of optimize the cost of
the temporal integration. On the other, we consider spatial approximations that allow to prove error bounds with constants independent
of inverse powers of the viscosity. Concerning the first question, following the recent reference \cite{imex_schneier}, we consider
implicit-explicit methods with explicit or semi-implicit treatment of the nonlinear term, which reduces the cost of every single time step. Also, we
include the error analysis of the variable step case. As stated in \cite{john_var}: ``Adaptive time stepping is an important tool in Computational Fluid Dynamics for controlling the accuracy of simulations and for enhancing their efficiency". Concerning the second question, we add grad-div stabilization
 in the spatial discretization of the method. As in \cite{nos_grad_div}, adding this stabilization we are able to get bounds with constants independent of inverse powers of the viscosity. This is one of the methods considered in \cite{cmame_review} in which methods with
 robust bounds (error bounds with constants independent of the Reynolds number are studied). Methods for whom robust estimates can be derived enable stable flow simulations for small viscosity coefficients on comparatively coarse grids.

Let $\Omega \subset {\mathbb R}^d$, $d \in \{2,3\}$, be a bounded domain with
polyhedral  and Lipschitz boundary $\partial \Omega$. The incompressible
Navier--Stokes equations model the conservation of linear momentum and the
conservation of mass (continuity equation) by
\begin{align}
\label{NS} \partial_t\bu -\nu \Delta \bu + (\bu\cdot\nabla)\bu + \nabla p &= \bff &&\text{in }\ (0,T)\times\Omega,\nonumber\\
\nabla \cdot \bu &=0&&\text{in }\ (0,T)\times\Omega,\\
\bu(0, \cdot) &= \bu_0(\cdot)&&\text{in }\ \Omega,\nonumber
\end{align}
where $\bu$ is the velocity field, $p$ the kinematic pressure, $\nu>0$ the kinematic viscosity coefficient,
$\bu_0$ a given initial velocity, and $\bff$ represents the external body accelerations acting
on the fluid. The Navier--Stokes equations \eqref{NS} are equipped with homogeneous
Dirichlet boundary conditions $\bu = \boldsymbol 0$ on $\partial \Omega$.

In \cite{nos_grad_div} a stabilized method using only grad-div stabilization to approach the evolutionary Navier-Stokes equations is considered
and analyzed. Error bounds with constants independent of inverse powers of the viscosity are proved. Using piecewise polynomials 
of degree $r$ for the velocity bounds of order $r$ are obtained for the $L^2$ error of the velocity. Both the continuous-in-time case and the fully discrete scheme with the fully implicit backward Euler method as time integrator are analyzed.

Recently, in \cite{imex_schneier}, implicit-explicit (IMEX) variable stepsize methods for the evolutionary Navier-Stokes equations are analyzed. In these
algorithms the nonlinear term is treated fully explicitly while the remaining terms are treated implicitly. Stability and convergence is proved
for a variable stepsize first order method. However, the error bounds in \cite{imex_schneier} depend strongly on inverse powers of $\nu$. In particular, 
a CFL-type condition is needed for the time step depending on $\nu^{-1}$.  The authors state that the analysis including grad-div stabilization remains
being an open problem.

 In the present paper, as in \cite{nos_grad_div}, we add grad-did stabilization to the mixed finite element approximation
to be able to get bounds independent on $\nu^{-1}$. We analyze a second order BDF2 method in time, with explicit treatment of the nonlinear term.
We consider both the case of fixed time step and the case of variable time step.
A CFL-type condition is needed in our error analysis. This condition is stronger than the usual one in which $(\Delta t) h^{-1}$ has to be bounded ($h$
being the mesh size) because we required $(\Delta t) h^{-2}$ to be bounded (a similar CFL condition is obtained for the variable step case). However, we improve the condition in \cite{imex_schneier} in which
the quantity that has to be bounded is $(\Delta t) h^{-1}\nu^{-1}$. Then, in principle, the method we consider could be applied in case  of high Reynolds numbers without 
the need of reducing the size of the time step. Due to the stronger CFL condition of the method with  explicit treatment
of the nonlinear term, depending on the problem, it could be worth to use instead a semi-implicit
form for the nonlinear term. In this paper, we also include the error analysis of such a method in which the
nonlinear term is semi-implicit. For this method the error analysis  is obtained with the
same tools as the previous analysis but it is much simpler. Also,  no CFL condition is needed. The same kind of bounds as for the previous method
are proved in which the constants are also independent on inverse powers of the viscosity.

Concerning the BDF2 method with variable time step, we did not found in the literature any reference with the
error analysis for the Navier-Stokes equations. In \cite{becker} a second order backward difference method with variable steps is
analyzed for a linear parabolic problem. The error analysis for semilinear parabolic problems can be found in \cite{emmrich}.
The convergence of the variable two-step BDF time discretisation of nonlinear evolution problems governed by a monotone potential operator is
studied in \cite{emmrich2} while the analysis for the Cahn-Hilliard equation appears in \cite{chinos_CH}.

The outline of the paper is as follows. In Section 2 we introduce some notation and state some preliminaries. In Section 3 we carry out the error
analysis of the fully discrete methods, both for the fixed step and the variable time step cases. Some technical lemmas are stated and proved along the section. Although the error analysis of the fixed step can be obtained as a particular case of the variable step  we decided to start with the analysis of the simpler case since most of the ideas are common
and in this way, in our opinion, the skeleton of the analysis can be better understood. We prove  optimal convergence of second order in time for the
fully discrete methods (both for the fixed and variable time step size cases and both for explicit and semi-implicit treatment
of the nonlinear term) and
the same rate of convergence in space as in \cite{nos_grad_div}. In Section 4 we show some numerical experiments while 
some conclusions are stated in the last section.

\section{Preliminaries and notation}
Throughout the paper we will denote by $W^{s,p}(D)$ the Sobolev space of real valued functions defined on the domain $D\subset\mathbb{R}^d$ with distributional derivatives of order up to $s$ in $L^p(D)$, endowed with the usual norm which is denoted by $\|\cdot\|_{W^{s,p}(D)}$. If $s$ is not a positive integer, $W^{s,p}(D)$ is defined by interpolation \cite{Adams}. If $s=0$ we understand that $W^{0,p}(D)=L^p(D)$. As it is standard, $W^{s,p}(D)^d$ will be endowed with the product norm that, if no confusion can arise, will be denoted again by $\|\cdot\|_{W^{s,p}(D)}$. We will distinguish the case  $p=2$ using $H^s(D)$ to denote the space $W^{s,2}(D)$. We will make use of the space $H_0^1(D)$, the closure in $H^1(D)$ of the set of infinitely differentiable functions with compact support
in $D$.  For simplicity,
we use $\|\cdot\|_s$ (resp. $|\cdot |_s$) to denote the norm (resp. seminorm) both in $H^s(\Omega)$ or $H^s(\Omega)^d$. The exact meaning will be clear by the context. The inner product of $L^2(\Omega)$ or $L^2(\Omega)^d$ will be denoted by $(\cdot,\cdot)$ and the corresponding norm by $\|\cdot\|_0$. The norm of the space of essentially bounded functions $L^\infty(\Omega)$ will be denoted by $\|\cdot\|_\infty$. For vector valued function we will use the same conventions as before.
We represent by $\|\cdot\|_{-1}$  the norm of the dual space of $H^1_0(\Omega)$ which is denoted by $H^{-1}(\Omega)$. As usual, we always identify $L^2(\Omega)$
with its dual so we have $H^1_0(\Omega)\subset L^2(\Omega)\subset H^{-1}(\Omega) $ with compact injection.

Using the function spaces
$
V=H_0^1(\Omega)^d$, and $$ Q=L_0^2(\Omega)=\left\{q\in L^2(\Omega):
(q,1)=0\right\},
$$
the weak formulation of problem (\ref{NS}) is:
Find $(\bu,p):(0,T]\rightarrow  V\times Q$ such that for all $(\bv,q)\in V\times Q$,
\begin{equation}\label{eq:NSweak}
(\partial_t\bu,\bv)+\nu (\nabla \bu,\nabla \bv)+((\bu\cdot \nabla) \bu,\bv)-(\nabla \cdot \bv,p)
+(\nabla \cdot \bu,q)=(\boldsymbol f,\bv).
\end{equation}

The Hilbert space
$$
H^{\rm div}=\{ \bu \in L^{2}(\Omega)^d \ \mid \ \nabla \cdot \bu=0, \,
\bu\cdot \mathbf n|_{\partial \Omega} =0 \}$$
will be endowed with the inner product of $L^{2}(\Omega)^{d}$ and the space
$$V^{\rm div}=\{ \bu \in V \ \mid \ \nabla \cdot \bu=0 \}$$
with the inner product of $V$.

The following Sobolev's embedding \cite{Adams} will be used in the analysis: For  $1\le p<d/s$
let $q$ be such that $\frac{1}{q}
= \frac{1}{p}-\frac{s}{d}$. There exists a positive constant $C$, independent of $s$, such
that
\begin{equation}\label{sob1}
\|v\|_{L^{q'}(\Omega)} \le C \| v\|_{W^{s,p}(\Omega)}, \qquad
\frac{1}{q'}
\ge \frac{1}{q}, \quad v \in
W^{s,p}(\Omega).
\end{equation}
If $p>d/s$ the above relation is valid for $q'=\infty$. A similar embedding inequality holds trivially for vector valued functions.

Let $V_h\subset V$ and $Q_h\subset Q$ be two families of finite element
spaces composed of piecewise polynomials of
degrees at most $k$ and $l$, respectively, that correspond to a family of partitions $\mathcal T_h$ of
$\Omega$ into mesh cells with maximal diameter $h$.
In the following, we consider pairs of finite element spaces that satisfy the
inf-sup condition
\begin{equation}\label{LBB}
\inf_{q_h\in Q_h}\sup_{\bv_h\in V_h}\frac{(\nabla \cdot \bv_h,q_h)}{\|\nabla \bv_h\|_0\|q_h\|_0}\ge \beta_0,
\end{equation}
with $\beta_0$ a constant independent of the mesh size $h$.

It will be assumed that
the meshes are quasi-uniform and that  the following inverse
inequality holds for each $v_{h} \in V_{h}$, see e.g., \cite[Theorem 3.2.6]{Cia78},
\begin{equation}
\label{inv} \| \bv_{h} \|_{W^{m,p}(K)} \leq C_{\mathrm{inv}}
h_K^{n-m-d\left(\frac{1}{q}-\frac{1}{p}\right)}
\|\bv_{h}\|_{W^{n,q}(K)},
\end{equation}
where $0\leq n \leq m \leq 1$, $1\leq q \leq p \leq \infty$ and $h_K$
is the size (diameter) of the mesh cell $K \in \mathcal T_h$.

The space of discrete divergence-free functions is denoted by
$$
V_h^{\rm div}=\left\{\bv_h\in V_h\ \mid\ (\nabla\cdot \bv_h,q_h)=0\quad \forall q_h\in Q_h \right\}.
$$


Denoting by $\pi_h$ the $L^2(\Omega)$ projection of the pressure $p$
onto $Q_h$, we have that for $0\le m\le 1$
\begin{equation}\label{eq:pressurel2}
\|p-\pi_h\|_m\le C h^{l+1-m}\|p\|_{l+1} \quad \forall p\in H^{l+1}(\Omega).
\end{equation}
In the error analysis, the Poincar\'{e}--Friedrichs inequality
\begin{equation}\label{eq:poin}
\|\bv\|_{0} \leq
C\left|\Omega\right|^{1/d}\|\nabla \bv\|_{0},\quad\forall \bv\in H_0^1(\Omega)^d,
\end{equation}
will be used.


In the analysis, the Stokes problem
\begin{align}\label{eq:stokes_str}
-\nu \Delta \bu+\nabla p&={\bg}\quad\mbox{in }\Omega, \nonumber\\
\bu&=\boldsymbol 0\quad\mbox{on } \partial \Omega,\\
\nabla \cdot \bu&=0\quad\mbox{in } \Omega,\nonumber
\end{align}
will be considered. Let us denote by
$(\bu_h,p_h)\in V_h \times Q_h$  the mixed finite element approximation to \eqref{eq:stokes_str},
given by
\begin{equation}\label{eq:gal_stokes}
\begin{split}
\nu(\nabla \bu_h,\nabla \bv_h)-(\nabla \cdot \bv_h, p_h)&=({\bg},\bv_h)\quad \forall \bv_h\in V_h\\
(\nabla \cdot \bu_h,q_h)&=0\qquad\quad\,\,\,  \forall q_h\in Q_h.
\end{split}
\end{equation}
Following \cite{girrav}, one gets the estimates
\begin{align}\label{eq:cota_stokes_v0}
h^{-1}\|\bu-\bu_h\|_0+\|\bu-\bu_h\|_1&\le  C \left(\inf_{\bv_h\in V_h}\|\bu-\bv_h\|_1+\nu^{-1}\inf_{q_h\in Q_h}\|p-q_h\|_0\right),\\
\|p-p_h\|_0&\le C\left( \nu \inf_{\bv_h\in V_h}\|\bu-\bv_h\|_1+
\inf_{q_h\in Q_h}\|p-q_h\|_0\right).\label{eq:cota_stokes_pre}\end{align}
It can be observed that the error bounds for the velocity depend on negative powers of $\nu$.

For the analysis, it will be advantageous to use a projection of $(\bu,p)$ into $V_h \times Q_h$
with uniform in $\nu$, optimal, bounds for the velocity. In \cite{nos_oseen} a projection with this property
 was introduced.
Let  $(\bu,p)$ be the solution of the Navier--Stokes equations \eqref{NS} with $\bu\in V\cap H^{k+1}(\Omega)^d$,
$p \in Q\cap H^{k}(\Omega)$,
$k\ge 1$,
and observe that $(\bu,0)$ is the solution of  the Stokes problem \eqref{eq:stokes_str} with
 right-hand side
\begin{equation}\label{eq:stokes_rhs_g}
{\bg}={\bff}-\partial_t \bu-(\bu\cdot \nabla)\bu-\nabla p.
\end{equation}
Denoting the corresponding
Galerkin approximation in $V_h\times Q_h$ by $({\bs_h},l_h)$, one obtains from
\eqref{eq:cota_stokes_v0}-\eqref{eq:cota_stokes_pre}
\begin{align}\label{eq:cotanewpro}
\|\bu-{\bs}_h\|_0+h\|\bu-{\bs}_h\|_1&\le C h^{k+1}\|\bu \|_{k+1},\\
\label{eq:cotanewpropre}
\|l_h\|_0&\le C \nu h^{k}\|\bu\|_{k+1},
\end{align}
where the constant $C$ does not depend on $\nu$.

\begin{remark}\label{rem:estimate_partial_t}
Assuming the necessary smoothness in time and considering \eqref{eq:stokes_str} with
$$
{\bg} = \partial_t\left({\bff}-\partial_t \bu-(\bu\cdot \nabla)\bu-\nabla p\right),
$$
one can derive an error bound of the form \eqref{eq:cotanewpro}
also for $\partial_t (\bu-{\bs}_h)$. One can proceed similarly for higher order
derivatives in time.
\end{remark}

Following \cite{chenSiam},
one can also obtain the following bounds for ${\bs}_h$
\begin{align}
\|\bu-{\bs}_h\|_\infty&\le C_\infty h\left(\log(\left|\Omega\right|^{1/d}/h)\right)^{\bar r}\|\nabla \bu\|_\infty\label{cotainfty0},\\
\|\nabla (\bu-{\bs}_h)\|_\infty&\le C_\infty\|\nabla \bu\|_\infty \label{cotainfty1},
\end{align}
where $C_\infty$ does not depend on $\nu$ and $\bar r=0$ for linear elements and $\bar r=1$ otherwise.

\section{The IMEX and semi-implicit BDF2 methods in time with grad-div stabilization in space}
In the sequel we well call IMEX to the method with explicit treatment of the nonlinear term and semi-implicit to
the method with semi-implicit nonlinear term.

In this section we state and analyze a fixed step and a variable step versions of both an IMEX and a semi-implicit
BDF2 methods with grad-div stabilization in space. We first introduce the grad-div stabilization
and a couple of Gronwall lemmas that will be applied for the error analysis of the methods. Then,
we introduce the fully discrete schemes. Some lemmas are then proved that will be used both
for the fixed and variable step cases analyzed in next two subsections. In Section \ref{se:fixed} we carry out the error analysis for the case in which the step size is fixed. The variable step size case
is analyzed in Section \ref{se:var}. The ideas in both sections are similar, the second one having more technical difficulties.
For this reason, we think it is convenient to show first the analysis for the fixed step size, although this
case can be obviously obtained as a particular case of the error analysis of Section \ref{se:var}.

The spatial discretization that will be studied for the approximation of the solution of
the Navier--Stokes  equations
(\ref{NS}) is obtained by adding to the Galerkin equations a control of the divergence
constraint (grad-div stabilization). More precisely,
the following grad-div method will be considered: Find $(\bu_h,p_h):(0,T]\rightarrow V_h\times Q_h$ such that for all $(\bv_h,q_h)\in V_h\times Q_h$ one has
\begin{equation}\label{eq:gal}
\begin{split}
(\partial_t\bu_h,\bv_h)+\nu(\nabla \bu_h,\nabla \bv_h)&+b(\bu_h,\bu_h,\bv_h)-(p_h,\nabla \cdot \bv_h)\\
&+(\nabla\cdot \bu_h,q_h)+\mu(\nabla \cdot \bu_h,\nabla \cdot \bv_h) =(\boldsymbol f,\bv_h),
\end{split}
\end{equation}
with $\bu_h(0)$ given.
Here, and in the rest of the paper,
$$
b(\bu,\bv,\bw)=(B(\bu,\bv),\bw)\quad \forall\bu,\bv,\bw\in H_0^1(\Omega)^d,
$$
where,
$$
B(\bu,\bv)=(\bu\cdot \nabla )\bv+\frac{1}{2}(\nabla  \cdot\bu)\bv\quad \forall \bu,\bv\in H_0^1(\Omega)^d
$$
Notice the well-known property
\begin{equation}
\label{eq:skew}
b(\bu,\bv,\bw)= - b(\bu,\bw,\bv)\quad \forall\bu,\bv,\bw\in V,
\end{equation}
such that, in particular, $b(\bu,\bw,\bw) = 0$ $\forall \bu, \bw\in V $.
We will use the following discrete Gronwall inequality whose proof can be found in \cite{hey-ran4}.
\begin{lema}\label{gronwall}
Let $k,B,a_j,b_j,c_j,\gamma_j$ be nonnegative numbers such that
$$
a_n+k\sum_{j=0}^n b_j\le k\sum_{j=0}^n \gamma_j a_j+k\sum_{j=0}^n c_j+B,\quad for \quad n\ge 0.
$$
Suppose that $k\gamma_j<1$, for all j, and set $\sigma_j=(1-k\gamma_j)^{-1}$. Then
$$
a_n+k\sum_{j=0}^nb_j\le \exp\left(k\sum_{j=0}^n\sigma_j\gamma_j\right)\left\{k\sum_{j=0}^nc_j+B\right\}, \quad for\quad  n\ge 0.
$$
\end{lema}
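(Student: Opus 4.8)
The plan is a straightforward induction on $n$; this is the classical discrete Gronwall estimate (see \cite{hey-ran4}). Throughout, write $D_n := k\sum_{j=0}^n c_j + B$, which is nondecreasing in $n$, and $S_n := k\sum_{j=0}^n \sigma_j\gamma_j$ with the convention $S_{-1}=0$; note that the hypothesis $0\le k\gamma_j<1$ forces $\sigma_j = (1-k\gamma_j)^{-1}\ge 1$, and that $k\sigma_j\gamma_j = S_j - S_{j-1}$.

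First I would extract a bound for $a_n$ by itself. Discarding the nonnegative term $k\sum_{j=0}^n b_j$ on the left-hand side and splitting off the $j=n$ summand on the right, the hypothesis gives $(1-k\gamma_n)a_n \le k\sum_{j=0}^{n-1}\gamma_j a_j + D_n$; multiplying through by $\sigma_n$ yields
$$a_n \le \sigma_n k\sum_{j=0}^{n-1}\gamma_j a_j + \sigma_n D_n. \qquad (\star)$$
From $(\star)$ I claim $a_n \le \sigma_n e^{S_{n-1}} D_n$ for every $n\ge 0$, which I would prove by (strong) induction: for $n=0$ the sum in $(\star)$ is empty, so $a_0\le\sigma_0 D_0 = \sigma_0 e^{S_{-1}}D_0$; for the inductive step, substitute $a_j \le \sigma_j e^{S_{j-1}} D_j$ for $j\le n-1$ into $(\star)$, bound $D_j\le D_n$, and use the elementary comparison $k\sigma_j\gamma_j\, e^{S_{j-1}} = (S_j-S_{j-1})e^{S_{j-1}}\le \int_{S_{j-1}}^{S_j} e^t\,dt = e^{S_j}-e^{S_{j-1}}$, so that $k\sum_{j=0}^{n-1}\sigma_j\gamma_j e^{S_{j-1}}$ telescopes to $e^{S_{n-1}}-1$; then $(\star)$ gives $a_n\le \sigma_n D_n(e^{S_{n-1}}-1) + \sigma_n D_n = \sigma_n e^{S_{n-1}} D_n$.

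Finally I would feed this bound back into the full hypothesis, this time keeping the term $k\sum_{j=0}^n b_j$. Using $a_j\le \sigma_j e^{S_{j-1}}D_j$ for $0\le j\le n$, the monotonicity $D_j\le D_n$, and the same telescoping comparison now summed up to $j=n$, one gets $k\sum_{j=0}^n\gamma_j a_j \le D_n\sum_{j=0}^n(S_j-S_{j-1})e^{S_{j-1}} \le D_n(e^{S_n}-1)$, and therefore
$$a_n + k\sum_{j=0}^n b_j \le k\sum_{j=0}^n \gamma_j a_j + D_n \le D_n(e^{S_n}-1) + D_n = e^{S_n} D_n,$$
which is precisely the asserted inequality. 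There is no genuine obstacle here — the lemma is routine — and the only points requiring care are keeping the shifted exponent $S_{n-1}$ straight through the induction and using $\sigma_j\ge 1$ so that each discarded or added term moves the inequality in the admissible direction; if one prefers the intermediate statement in the form $a_n\le e^{S_n}D_n$, it follows from $\sigma_n e^{S_{n-1}}\le e^{S_n}$, itself a consequence of the elementary inequality $(1-x)^{-1}\le e^{x/(1-x)}$ valid for $0\le x<1$.
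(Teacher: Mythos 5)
Your proof is correct: every step checks out, including the absorption of the $j=n$ term via $(1-k\gamma_n)a_n\le k\sum_{j=0}^{n-1}\gamma_ja_j+D_n$, the strong induction giving $a_n\le\sigma_n e^{S_{n-1}}D_n$, and the telescoping comparison $(S_j-S_{j-1})e^{S_{j-1}}\le e^{S_j}-e^{S_{j-1}}$, which legitimately uses $S_j\ge S_{j-1}$. It is, however, a genuinely different route from the one the paper relies on. The paper does not prove Lemma~\ref{gronwall} at all; it cites the classical argument of Heywood and Rannacher, and it uses that same style explicitly when proving the variant Lemma~\ref{gronwall2}: there one introduces a slack term $d_n$ so that the hypothesis becomes an equality, calls $S_n$ the \emph{entire} left-hand side (including the $b_j$ sum), derives the one-step recursion $S_n-S_{n-1}\le\gamma_nS_n+\dots+c_n$, and closes the induction with the elementary bound $(1-\gamma)^{-1}=1+\sigma\gamma\le e^{\sigma\gamma}$. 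Your argument instead makes two passes: first a pointwise bound on $a_n$ alone, then reinsertion into the full inequality, with the exponential entering through an integral comparison rather than through $(1-x)^{-1}\le e^{x/(1-x)}$ (which you only invoke in your closing remark). What the recursion approach buys is brevity and the fact that the $b_j$ terms are carried along automatically inside $S_n$; what your approach buys is the sharper intermediate estimate $a_n\le\sigma_n e^{S_{n-1}}D_n$ and no need to introduce the auxiliary slack sequence. Both are sound; either could replace the citation.
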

We will also apply the following Gronwall lemma.
\begin{lema}\label{gronwall2}
Let $B,a_j,b_j,c_j,\gamma_j,\delta_j$ be nonnegative numbers such that
$$
a_n+\sum_{j=0}^n b_j\le \gamma_n a_n +\sum_{j=0}^{n-1} (\gamma_j+\delta_j) a_j+\sum_{j=0}^n c_j+B,\quad for \quad n\ge 0.
$$
Suppose that $\gamma_j<1$, for all j, and set $\sigma_j=(1-\gamma_j)^{-1}$. Then
$$
a_n+\sum_{j=0}^nb_j\le \exp\left(\sigma_n \gamma_n+\sum_{j=0}^{n-1}(\sigma_j\gamma_j+\delta_j)\right)\left\{\sum_{j=0}^nc_j+B\right\}, \quad for\quad  n\ge 0.
$$
\end{lema}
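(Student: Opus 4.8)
The plan is to prove Lemma~\ref{gronwall2} by a two-stage argument: first absorb the $\gamma_n a_n$ term on the right-hand side into the left-hand side so as to obtain a "clean" discrete Gronwall inequality of the classical type, and then apply an induction on $n$ to close the estimate. Since the statement is a purely algebraic inequality, there is nothing analytic to do; the whole difficulty is bookkeeping with the $\sigma_j=(1-\gamma_j)^{-1}$ factors.

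\textbf{Step 1 (absorbing the diagonal term).} Starting from
$$
a_n+\sum_{j=0}^n b_j\le \gamma_n a_n +\sum_{j=0}^{n-1} (\gamma_j+\delta_j) a_j+\sum_{j=0}^n c_j+B,
$$
and using $\gamma_n<1$, I would subtract $\gamma_n a_n$ from both sides and divide by $1-\gamma_n$, i.e. multiply by $\sigma_n$. Since $b_j,c_j,B\ge 0$ and $\sigma_n\ge 1$, this yields
$$
a_n+\sum_{j=0}^n b_j\le \sigma_n a_n+\sum_{j=0}^n b_j\le \sigma_n\Bigl(\sum_{j=0}^{n-1}(\gamma_j+\delta_j)a_j+\sum_{j=0}^n c_j+B\Bigr).
$$
It is cleaner, though, to keep $a_n+\sum_{j=0}^n b_j$ on the left and only enlarge the right: one gets
$$
a_n+\sum_{j=0}^n b_j\le \sum_{j=0}^{n-1}(\sigma_n\gamma_j+\sigma_n\delta_j)a_j+\sigma_n\sum_{j=0}^n c_j+\sigma_n B,
$$
but to match the exponent in the statement one wants $\sigma_j\gamma_j+\delta_j$ rather than $\sigma_n\gamma_j+\sigma_n\delta_j$, so instead I would first bound $a_j\le \sigma_j a_j$ inside the sum only where it helps, and track the product of the $\sigma$'s through the induction rather than pulling a single $\sigma_n$ outside.

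\textbf{Step 2 (induction).} Set $G_n=\exp\bigl(\sigma_n\gamma_n+\sum_{j=0}^{n-1}(\sigma_j\gamma_j+\delta_j)\bigr)$ and $C_n=\sum_{j=0}^n c_j+B$, and claim $a_n+\sum_{j=0}^n b_j\le G_n C_n$. For $n=0$ the inequality $a_0+b_0\le\gamma_0 a_0+c_0+B$ gives, after multiplying by $\sigma_0$, $a_0+b_0\le \sigma_0 a_0+b_0\le\sigma_0(c_0+B)\le e^{\sigma_0\gamma_0}(c_0+B)=G_0C_0$, using $\sigma_0=(1-\gamma_0)^{-1}\le e^{\sigma_0\gamma_0}$ (which follows from $1+x\le e^x$ with $x=\sigma_0\gamma_0$ and $\sigma_0=1+\sigma_0\gamma_0$). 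For the inductive step, feed the bound $a_j+\sum_{i\le j}b_i\le G_jC_j$, hence $a_j\le G_jC_j$, into the right-hand side of the inequality produced in Step~1; using monotonicity $C_j\le C_n$ and $G_j\le G_n/\exp(\sigma_n\gamma_n+\sigma_j\gamma_j+\delta_j+\cdots)$ one collects the geometric-type sum $\sum_{j=0}^{n-1}(\sigma_j\gamma_j+\delta_j)$ in the exponent, together with the leftover $\sigma_n\gamma_n$ coming from absorbing the diagonal term at level $n$, and concludes $a_n+\sum_{j=0}^n b_j\le G_nC_n$ via the elementary inequality $\prod(1+x_j)\le \exp(\sum x_j)$.

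\textbf{Main obstacle.} The only delicate point is getting the exponent to come out as exactly $\sigma_n\gamma_n+\sum_{j=0}^{n-1}(\sigma_j\gamma_j+\delta_j)$ rather than something weaker like $\sum_{j=0}^n\sigma_j\gamma_j+\sum_{j=0}^n\sigma_j\delta_j$; the trick is that the $\delta_j$ contributions must be kept \emph{without} a $\sigma_j$ factor, which forces one to absorb the diagonal $\gamma_n a_n$ term by the $\sigma_n$ multiplier \emph{before} iterating, and to handle the off-diagonal $\delta_j a_j$ terms by the plain induction hypothesis $a_j\le G_jC_j$ with no extra weight. Once the order of these two operations is fixed, the rest is the standard telescoping/geometric-sum estimate, and it can be cited in the same spirit as Lemma~\ref{gronwall} from \cite{hey-ran4}.
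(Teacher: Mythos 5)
Your Step 1 and the base case are sound, and your overall strategy (absorb the diagonal term via $\sigma_n$, then induct on $a_j\le G_jC_j$) is genuinely different from the paper's and can in principle be completed --- but the inductive step, which you yourself identify as the only delicate point, is not actually carried out, and the tools you cite do not close it. After Step 1 and the induction hypothesis $a_j\le G_jC_j\le G_jC_n$, what has to be shown is
\[
\sigma_n\Bigl(1+\sum_{j=0}^{n-1}(\gamma_j+\delta_j)G_j\Bigr)\le G_n=\exp\Bigl(\sigma_n\gamma_n+\sum_{j=0}^{n-1}(\sigma_j\gamma_j+\delta_j)\Bigr).
\]
Monotonicity of $C_j$, your ratio bound on $G_j$ (which is in fact misstated: $G_n/G_j=\exp\bigl(\sigma_n\gamma_n+\delta_j+\sum_{i=j+1}^{n-1}(\sigma_i\gamma_i+\delta_i)\bigr)$, the $\sigma_j\gamma_j$ cancels) and $\prod(1+x_j)\le\exp(\sum x_j)$ do not give this: you must compare a \emph{sum} of weighted exponentials with a single exponential, not a product of factors $(1+x_j)$ with an exponential. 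A correct way to finish is the telescoping identity $\prod_{j=0}^{n-1}e^{x_j}-1=\sum_{j=0}^{n-1}(e^{x_j}-1)\prod_{i<j}e^{x_i}$ with $x_j=\sigma_j\gamma_j+\delta_j$, which reduces the claim to the termwise estimate $(\gamma_j+\delta_j)e^{\sigma_j\gamma_j}\le e^{\sigma_j\gamma_j+\delta_j}-1$; this holds because $e^{\delta_j}\ge1+\delta_j$ and $e^{\sigma_j\gamma_j}\ge1+\sigma_j\gamma_j=\sigma_j$, and combined with $\sigma_n\le e^{\sigma_n\gamma_n}$ it produces exactly the stated exponent. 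None of this appears in your sketch, so as written the argument has a genuine hole at its central point.

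For comparison, the paper sidesteps this difficulty entirely by following \cite{hey-ran4}: it introduces the majorant $S_n$ equal to the right-hand side (adding a nonnegative slack $d_n$ to make it an equality), observes that $S_n-S_{n-1}=\gamma_n a_n+\delta_{n-1}a_{n-1}+c_n\le\gamma_n S_n+\delta_{n-1}S_{n-1}+c_n$, and thus obtains the two-term recursion $S_n\le\sigma_n\bigl((1+\delta_{n-1})S_{n-1}+c_n\bigr)$, which iterates immediately using $1+\delta\le e^{\delta}$ and $\sigma_k=1+\sigma_k\gamma_k\le e^{\sigma_k\gamma_k}$; the whole history sum $\sum_{j<n}(\gamma_j+\delta_j)a_j$ never has to be estimated. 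Your route is viable, but it needs the missing telescoping/termwise argument above before it is a proof.
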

\begin{proof}
We follow the proof of~\cite[Lemma~5.1]{hey-ran4}. Let $d_n$ be defined by the relation
$$
a_n+\sum_{j=0}^n b_j+d_n= \gamma_n a_n +\sum_{j=0}^{n-1} (\gamma_j+\delta_j) a_j+\sum_{j=0}^n c_j+B,\quad for \quad n\ge 0,
$$
and let us denote by~$S_n$ the left hand side above. Notice then 
$$
S_n-S_{n-1} = \gamma_n a_n + \delta_{n-1} a_{n-1} + c_n \le \gamma_n S_n + \delta_{n-1} S_{n-1}+c_n,\qquad n\ge 1.
$$
For $n=0$, we have
$$
S_0 \le \left(1+\gamma_0\right)^{-1} (c_0+B) = \left(1 + \sigma_0\gamma_0\right) (c_0+B)
\le \exp(\sigma_0\gamma_0)(c_0+B).
$$
Assume that
$$
S_k \le \exp\biggl(\sigma_k \gamma_k + \sum_{j=0}^{k-1} (\sigma_j \gamma_j+ \delta_j)\biggr) \biggl(
\sum_{j=0}^{k} c_j +B\biggr), \qquad k=0,\ldots,n.
$$
and let us see that the inequality also holds for $k=n+1$, which will finish the proof. We have
\begin{align*} 
S_{n+1} &\le (1-\gamma_{n+1})^{-1} \left((1+\delta_n)S_n + c_{n+1}\right)
\le (1+\sigma_{n+1}\gamma_{n+1})\left(\exp(\delta_n) S_n+c_{n+1}\right)\\
& \le \exp\left(\sigma_{n+1}\gamma_{n+1}\right)\biggl( \exp\biggl(\sum_{k=0}^n(\sigma_k\gamma_k+\delta_k)\biggr)
\biggl(\sum_{k=0}^n c_k+B\biggr) + c_{n+1}\biggr).
\end{align*}
\end{proof}

Now, we consider a fully discrete approximation to the solution of \eqref{eq:gal}. 
Let us consider time levels $0=t_0<t_1<\ldots < t_N=T$, and let us denote
$$
\Delta t_n =t_{n+1}-t_n, \qquad n=0,1\ldots,N-1,
$$
and
$$
\omega_n=\Delta t_n /\Delta t_{n-1},\qquad n=1,\ldots,N-1.
$$
In the sequel we will assume that for $\gamma\in(0,1)$ and $\Gamma>1$, we have
\begin{equation}
\label{eq:cond_omega}
\gamma \le \omega_n\le \Gamma,\qquad n=0,1,\ldots,N-1.
\end{equation}
We wil also use the following notation,
$$
D \bu^n=\bu^n-\bu^{n-1},\quad \hat \bu^n= \bu^n+\omega_nD\bu^n,
$$
and
$$
{\cal D}\bu^{n} = D\bu^{n} +\frac{\omega_{n-1}}{1+\omega_{n-1}}\left( \bu^{n} -\hat\bu^{n-1}\right).
$$
Observe that $\hat \bu^n$ is the extrapolated value at $t_{n+1}$ of the linear interpolant taking values $\bu^n$ and~$\bu^{n-1}$ at $t_n$ and~$t_{n-1}$, respectively. Notice also that for fixed step size, that is when $\Delta t_n=T/N$, 
$n=0,1,\ldots,N-1$, we have
$$
\hat \bu^n = 2\bu^n - \bu^{n-1}\quad \hbox{\rm and} \quad {\cal D}\bu^n =(D+\frac{1}{2}D^2)\bu^n=\frac{3}{2}\bu^n-2\bu^{n-1}+\frac{1}{2}\bu^{n-2}.
$$

Now, we consider the following implicit-explicit (IMEX) approximation to \eqref{eq:gal} based on the BDF2 method.
Given $\bu_h^0, \bu_h^1$ solve, for $n\ge 1$ and for all $(\bv_h,q_h)\in V_h\times Q_h$ 
\begin{eqnarray}\label{fully_discrete}
&&\left(\frac{1}{\Delta t_n}{\cal D}\bu_h^{n+1},\bv_h\right)+\nu(\nabla \bu_h^{n+1},\nabla \bv_h)+b(\hat \bu_h^n,\hat \bu_h^n,\bv_h)
-(p_h^{n+1},\nabla \cdot \bv_h)\quad\\
&&\quad\quad+(\nabla\cdot \bu_h^{n+1},q_h)+\mu(\nabla \cdot \bu_h^{n+1},\nabla \cdot \bv_h) =(\boldsymbol f^{n+1},\bv_h),\nonumber
\end{eqnarray} 
where, here and in the sequel, the notation $\boldsymbol f^{n+1}$ means $\boldsymbol f(t_{n+1})$.
We also consider
the semi-implicit method obtained when the third term on the left-hand side of ~(\ref{fully_discrete}) is replaced by
\begin{equation}
\label{eq:elCI0}
b(\hat \bu_h^n, \bu_h^{n+1},\bv_h)
\end{equation}
that is,
\begin{eqnarray}\label{semimp}
&&\left(\frac{1}{\Delta t_n}{\cal D}\bu_h^{n+1},\bv_h\right)+\nu(\nabla \bu_h^{n+1},\nabla \bv_h)+b(\hat \bu_h^n,\bu_h^{n+1},\bv_h)
-(p_h^{n+1},\nabla \cdot \bv_h)\quad\\
&&\quad\quad+(\nabla\cdot \bu_h^{n+1},q_h)+\mu(\nabla \cdot \bu_h^{n+1},\nabla \cdot \bv_h) =(\boldsymbol f^{n+1},\bv_h),\nonumber
\end{eqnarray}

Taking $\bv_h\in V_h^{\rm div}$ in \eqref{fully_discrete} we get
\begin{eqnarray}\label{fully_discrete2}
\left(\frac{1}{\Delta t_n}{\cal D}\bu_h^{n+1},\bv_h\right)+\nu(\nabla \bu_h^{n+1},\nabla \bv_h)+b(\hat \bu_h^n,\hat \bu_h^n,\bv_h)+\mu(\nabla \cdot \bu_h^{n+1},\nabla \cdot \bv_h) =(\boldsymbol f^{n+1},\bv_h).
\end{eqnarray} 
To get $\bu_h^1$ we 
apply the IMEX Euler method with the same structure as BDF2 (explicit non-linear term) for both the IMEX and the semi-implicit methods. In fact, formulae~\eqref{fully_discrete} and~\eqref{fully_discrete2} are valid for $n=0$ if we set
\begin{equation}
\label{eq:notation0}
\hat\bu^0=\bu^0,\quad {\cal D} \bu^1 =D\bu^1.
\end{equation}

We will compare $\bu_h^n$ with $\bs_h^n$. For simplicity we will assume $\bu_h^0=\bs_h^0$. 
It is easy to prove that for $n\ge 1$ and $\bv_h\in V_h^{\rm div}$
\begin{eqnarray}\label{fully_discrete2_sh}
&&\left(\frac{1}{\Delta t_n}{\cal D}\bs_h^{n+1},\bv_h\right)+\nu(\nabla \bs_h^{n+1},\nabla \bv_h)+b(\hat \bs_h^n,\hat \bs_h^n,\bv_h)
+\mu(\nabla \cdot \bs_h^{n+1},\nabla \cdot \bv_h) \quad\\
&&\quad=(\boldsymbol f^{n+1},\bv_h)+(p^{n+1}-\pi_h^{n+1},\nabla \cdot \bv_h)
+\mu(\nabla \cdot (\bs_h^{n+1}-\bu^{n+1}),\nabla \cdot \bv_h)\nonumber\\
&&\quad+\left(\frac{1}{\Delta t_n}{\cal D}\bs_h^{n+1}-\bu_t^{n+1},\bv_h\right)
+b(\hat \bs_h^n,\hat\bs_h^n,\bv_h)-b(\bu^{n+1},\bu^{n+1},\bv_h).
\nonumber
\end{eqnarray}
Then, denoting by 
$$
\be_h^n=\bs_h^n-\bu_h^n.
$$
and subtracting \eqref{fully_discrete2} from \eqref{fully_discrete2_sh} we get
\begin{eqnarray}\label{fully_error}
&&\left(\frac{1}{\Delta t_n}{\cal D}\be_h^{n+1},\bv_h\right)+\nu(\nabla \be_h^{n+1},\nabla \bv_h)
+b(\hat \bs_h^n,\hat \bs_h^n,\bv_h)-b(\hat\bu_h^n,\hat\bu_h^n,\bv_h)\quad\nonumber\\
&&\quad+\mu(\nabla \cdot \be_h^{n+1},\nabla \cdot \bv_h) =(\tau_1^{n+1}+\tau_3^{n+1},\bv_h)+(\tau_2^{n+1},\nabla \cdot \bv_h).
\end{eqnarray}
where
\begin{eqnarray}\label{lostrun}
\tau_1^{n+1}&=&\frac{1}{\Delta t_n}{\cal D}\bs_h^{n+1}-\bu_t^{n+1},\nonumber\\
\tau_2^{n+1}&=&(p^{n+1}-\pi_h^{n+1})+\mu\nabla \cdot(\bs_h^{n+1}-\bu^{n+1}),\\
(\tau_3^{n+1},\bv_h)&=&b(\hat \bs_h^n,\hat\bs_h^n,\bv_h)-b(\bu^{n+1},\bu^{n+1},\bv_h).\nonumber
\end{eqnarray} 
Similarly, for the semi-implicit method we get~\eqref{fully_discrete2} with the third and foruth terms replaced by
$b(\hat \bs_h^n, \bs_h^{n+1},\bv_h)-b(\hat\bu_h^n,\bu_h^{n+1},\bv_h)$, and $\tau_3$ replaced by $\tau_4$ defined by
\begin{equation}
\label{tau4}
(\tau_4^{n+1},\bv_h)=b(\hat \bs_h^n,\bs_h^{n+1},\bv_h)-b(\bu^{n+1},\bu^{n+1},\bv_h).
\end{equation}

In the following lemma we bound the difference of the nonlinear terms for the IMEX method.
\begin{lema}\label{le:nl}  Considering the fixed stepsize case $\Delta t_n=\Delta t=T/N$, the following bound holds
\begin{eqnarray}\label{eq:nonli1-0}
&&|b(\hat \bs_h^n,\hat \bs_h^n,\be_h^{n+1})-b(\hat\bu_h^n,\hat\bu_h^n,\be_h^{n+1})|\le
\left(\frac{\|\nabla \hat \bs_h^n\|_\infty}{2}+\frac{9}{2\mu}\|\hat \bs_h^n\|_\infty^2\right)\|\be_h^{n+1}\|_0^2
\nonumber\\
&&\quad+3\|\nabla \hat \bs_h^n\|_\infty\|\be_h^n\|_0^2+\frac{3}{2}\|\nabla \hat \bs_h^n\|_\infty\|\be_h^{n-1}\|_0^2+\frac{\mu}{3}\|\nabla \cdot\be_h^n\|_0^2+\frac{\mu}{6}\|\nabla \cdot\be_h^{n-1}\|_0^2\\
&&\quad+2\|\hat \bu_h^n\|_\infty c_{\rm inv} h^{-1}\|\be_h^{n+1}-\hat \be_h^n\|_0\|\be_h^{n+1}\|_0.
\nonumber
\end{eqnarray}
In view of the upper bound in \eqref{eq:cond_omega}, for the variable stepsize case the bound for the difference of the nonlinear terms is
\begin{eqnarray}\label{eq:nonli1-0_var}
&&|b(\hat \bs_h^n,\hat \bs_h^n,\be_h^{n+1})-b(\hat\bu_h^n,\hat\bu_h^n,\be_h^{n+1})|\le
\left(\frac{\|\nabla \hat \bs_h^n\|_\infty}{2}+\frac{9\alpha}{2\mu}\|\hat \bs_h^n\|_\infty^2\right)\|\be_h^{n+1}\|_0^2
\nonumber\\
&&\quad+\frac{(1+2\Gamma)(1+\Gamma)}{2}\|\nabla \hat \bs_h^n\|_\infty\|\be_h^n\|_0^2
+\frac{\Gamma(1+2\Gamma)}{2}\|\nabla \hat \bs_h^n\|_\infty\|\be_h^{n-1}\|_0^2
\nonumber\\
&&\quad+\frac{\mu(1+2\Gamma)(1+\Gamma)}{18\alpha}\|\nabla \cdot\be_h^n\|_0^2
\quad+\frac{\mu\Gamma(1+2\Gamma)}{18\alpha}\|\nabla \cdot\be_h^{n-1}\|_0^2\\
&&\quad+2\|\hat \bu_h^n\|_\infty c_{\rm inv} h^{-1}\|\be_h^{n+1}-\hat \be_h^n\|_0\|\be_h^{n+1}\|_0,
\nonumber
\end{eqnarray}
where $\alpha$ is a positive constant.
\end{lema}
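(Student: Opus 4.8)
The plan is to reduce everything to the linearity of $b$ in its first two arguments, the skew-symmetry \eqref{eq:skew}, the inverse inequality \eqref{inv}, and elementary Young inequalities. Writing $\hat\bu_h^n=\hat\bs_h^n-\hat\be_h^n$ and using linearity in the first two slots, I would first split
$$
b(\hat \bs_h^n,\hat \bs_h^n,\be_h^{n+1})-b(\hat\bu_h^n,\hat\bu_h^n,\be_h^{n+1})
= b(\hat\be_h^n,\hat\bs_h^n,\be_h^{n+1}) + b(\hat\bu_h^n,\hat\be_h^n,\be_h^{n+1}),
$$
so that only the second term carries the discrete iterate $\hat\bu_h^n$. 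The first term will be estimated \emph{without} the inverse inequality, the second one \emph{with} it, which is exactly what produces the $h^{-1}$ factor in the last line of \eqref{eq:nonli1-0} and \eqref{eq:nonli1-0_var}.

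For the first term I would expand, from the definition of $B$,
$$
b(\hat\be_h^n,\hat\bs_h^n,\be_h^{n+1})=\bigl((\hat\be_h^n\cdot\nabla)\hat\bs_h^n,\be_h^{n+1}\bigr)+\tfrac12\bigl((\nabla\cdot\hat\be_h^n)\hat\bs_h^n,\be_h^{n+1}\bigr).
$$
The first summand is bounded by $\|\nabla\hat\bs_h^n\|_\infty\|\hat\be_h^n\|_0\|\be_h^{n+1}\|_0$ and then by Young's inequality, giving a $\tfrac12\|\nabla\hat\bs_h^n\|_\infty\|\be_h^{n+1}\|_0^2$ contribution plus $\tfrac12\|\nabla\hat\bs_h^n\|_\infty\|\hat\be_h^n\|_0^2$; the second summand is bounded by $\tfrac12\|\hat\bs_h^n\|_\infty\|\nabla\cdot\hat\be_h^n\|_0\|\be_h^{n+1}\|_0$ and then by a $\mu$-weighted Young inequality, producing the $\tfrac{9}{2\mu}\|\hat\bs_h^n\|_\infty^2\|\be_h^{n+1}\|_0^2$ term plus a (deliberately non-sharp) multiple of $\mu\|\nabla\cdot\hat\be_h^n\|_0^2$. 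For the second term I would use skew-symmetry, $b(\hat\bu_h^n,\be_h^{n+1},\be_h^{n+1})=0$, to rewrite $b(\hat\bu_h^n,\hat\be_h^n,\be_h^{n+1})=\bigl(B(\hat\bu_h^n,\hat\be_h^n-\be_h^{n+1}),\be_h^{n+1}\bigr)$; since $\hat\be_h^n-\be_h^{n+1}\in V_h$, the inverse inequality gives $\|\nabla(\hat\be_h^n-\be_h^{n+1})\|_0\le c_{\rm inv}h^{-1}\|\hat\be_h^n-\be_h^{n+1}\|_0$ for the convective part, while the $\tfrac12(\nabla\cdot\hat\bu_h^n)$ part, after an integration by parts (the boundary term vanishes since $\hat\bu_h^n\in V_h\subset H_0^1(\Omega)^d$) and a further use of the inverse inequality, contributes a second term of the same type; together they add up to the last line $2\|\hat\bu_h^n\|_\infty c_{\rm inv}h^{-1}\|\be_h^{n+1}-\hat\be_h^n\|_0\|\be_h^{n+1}\|_0$.

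It then remains only to express $\hat\be_h^n$ through $\be_h^n$ and $\be_h^{n-1}$. In the fixed-step case $\hat\be_h^n=2\be_h^n-\be_h^{n-1}$, so applying $\|2a-b\|_0^2\le 6\|a\|_0^2+3\|b\|_0^2$ to both $\|\hat\be_h^n\|_0^2$ and $\|\nabla\cdot\hat\be_h^n\|_0^2$ turns the bounds above into \eqref{eq:nonli1-0}, the $\mu$-weighted Young constant being chosen so that the grad-div contributions appear with coefficients $\mu/3$ and $\mu/6$. In the variable-step case $\hat\be_h^n=(1+\omega_n)\be_h^n-\omega_n\be_h^{n-1}$, so one uses instead $\|(1+\omega_n)a-\omega_n b\|_0^2\le (1+\omega_n)(1+2\omega_n)\|a\|_0^2+\omega_n(1+2\omega_n)\|b\|_0^2$ together with $\omega_n\le\Gamma$ from \eqref{eq:cond_omega} (which turns the constants $6,3$ into $(1+\Gamma)(1+2\Gamma)$ and $\Gamma(1+2\Gamma)$), and the free $\mu$-weight is recorded as the parameter $\alpha$; this yields \eqref{eq:nonli1-0_var}, of which \eqref{eq:nonli1-0} is the case $\alpha=\Gamma=1$.

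The only point that requires care is the choice of the $\mu$-weighted Young constant for the grad-div summand: it must be picked so that the coefficients of $\|\nabla\cdot\be_h^n\|_0^2$ and $\|\nabla\cdot\be_h^{n-1}\|_0^2$ on the right are small enough to be later absorbed by the grad-div stabilization available at those time levels in the error equation \eqref{fully_error} (e.g.\ $\mu/3+\mu/6=\mu/2<\mu$ in the fixed-step case), without blowing up the $\|\hat\bs_h^n\|_\infty^2/\mu$ coefficient of $\|\be_h^{n+1}\|_0^2$; everything else is a routine Cauchy--Schwarz/Young computation. The $h^{-1}$ term in the last line is intentionally left un-estimated — it will be handled in Sections~\ref{se:fixed}--\ref{se:var} via the G-stability identity for the BDF2 operator and the CFL condition bounding $(\Delta t)h^{-2}$ — so it is not an obstacle at this stage.
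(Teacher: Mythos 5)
Your proposal is correct and follows essentially the same route as the paper: the same splitting $b(\hat\be_h^n,\hat\bs_h^n,\be_h^{n+1})+b(\hat\bu_h^n,\hat\be_h^n-\be_h^{n+1},\be_h^{n+1})$ via skew-symmetry, the same H\"older/Young estimates with the $\mu$-weight (recorded as $\alpha$ in the variable-step case), the inverse inequality only on the term carrying $\hat\bu_h^n$, and the expansion of $\hat\be_h^n=(1+\omega_n)\be_h^n-\omega_n\be_h^{n-1}$ with $\omega_n\le\Gamma$ to obtain the stated constants. Your integration-by-parts justification of the factor $2$ in the $c_{\rm inv}h^{-1}$ term correctly fills in a detail the paper leaves as ``easy to check''.
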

\begin{proof}
We start by proving \eqref{eq:nonli1-0}.
Adding $\pm b(\hat \bu_h^n,\hat \bs_h^n,\be_h^{n+1})$, and noticing that $b(\hat \bu_h^n,\be_h^{n+1},\be_h^{n+1})=0$,
we have
\begin{equation}
\label{eq:nl0}
|b(\hat \bs_h^n,\hat \bs_h^n,\be_h^{n+1})-b(\hat\bu_h^n,\hat\bu_h^n,\be_h^{n+1})|
=|b(\hat \be_h^n,\hat \bs_h^n,\be_h^{n+1})
-b(\hat \bu_h^n,\be_h^{n+1}-\hat \be_h^n,\be_h^{n+1})|.
\end{equation}
For the first term on the right-hand side above we write
\begin{eqnarray}\label{eq:hacefal}
|b(\hat \be_h^n,\hat \bs_h^n,\be_h^{n+1})|&\le&
\|\hat \be_h^n\|_0\|\nabla \hat \bs_h^n\|_\infty\|\be_h^{n+1}\|_0
+\|\hat \bs_h\|_\infty\|\nabla \cdot \hat \be_h^n\|_0\|\be_h^{n+1}\|_0\\
&\le&\left(\frac{\|\nabla \hat \bs_h^n\|_\infty}{2}+\frac{9}{2\mu}\|\hat\bs_h^n\|_\infty^2\right)\|\be_h^{n+1}\|_0^2
+\|\nabla \hat \bs_h^n\|_\infty\frac{1}{2}\|\hat \be_h^n\|_0^2+
\frac{\mu}{18}\|\nabla \cdot \hat \be_h^n\|_0^2.\nonumber
\end{eqnarray}
Let us now observe that, 
$$
\|\hat \be_h^n\|_0^2\le 6\|\be_h^n\|_0^2+3\|\be_h^{n-1}\|_0^2,
$$
and a similar expression for~$\|\nabla\cdot \hat \be_h^n\|_0^2$, so that,
\begin{eqnarray*}
|b(\hat \be_h^n,\hat \bs_h^n,\be_h^{n+1})|&\le&
\left(\frac{\|\nabla \hat \bs_h^n\|_\infty}{2}+\frac{9}{2\mu}\|\hat\bs_h^n\|_\infty^2\right)\|\be_h^{n+1}\|_0^2
{}+\frac{\mu}{3}\|\nabla \cdot\be_h^n\|_0^2+\frac{\mu}{6}\|\nabla \cdot\be_h^{n-1}\|_0^2\\
&&+3\|\nabla \hat \bs_h^n\|_\infty\|\be_h^n\|_0^2+\frac{3}{2}\|\nabla \hat \bs_h^n\|_\infty\|\be_h^{n-1}\|_0^2.
\end{eqnarray*}
Finally, from the definition of the nonlinear term and using inverse inequality \eqref{inv} it is easy to check that
$$
|b(\hat \bu_h^n,\be_h^{n+1}-\hat \be_h^n,\be_h^{n+1})| \le 2\|\hat \bu_h^n\|_\infty c_{\rm inv} h^{-1}\|\be_h^{n+1}-\hat \be_h^n\|_0\|\be_h^{n+1}\|_0.
$$
The proof of \eqref{eq:nonli1-0_var} can be obtained 
arguing in the same way with only two differences. The first one is that we include a parameter
$\alpha$ to bound the second term on the first line of the right-hand side of \eqref{eq:hacefal}. The
second one is that we  apply \eqref{eq:cond_omega} to bound both $\|\hat \be_h^n\|_0^2$ and $\| \nabla \cdot \hat \be_h^n\|_0^2$.
\end{proof}
For the semi-implicit method we have the following result.

\begin{lema}\label{le:nl-si}   The following bound holds in the fixed stepsize case $\Delta t_n=\Delta t=T/N$,
\begin{eqnarray}\label{eq:nonli1-0-si}
&&|b(\hat \bs_h^n, \bs_h^{n+1},\be_h^{n+1})-b(\hat\bu_h^n,\bu_h^{n+1},\be_h^{n+1})|\le
\left(\frac{\|\nabla \bs_h^{n+1}\|_\infty}{2}+\frac{9}{2\mu}\|\bs_h^{n+1}\|_\infty^2\right)\|\be_h^{n+1}\|_0^2
\nonumber\\
&&\quad+3\|\nabla\bs_h^{n+1}\|_\infty\|\be_h^n\|_0^2+\frac{3}{2}\|\nabla\bs_h^{n+1}\|_\infty\|\be_h^{n-1}\|_0^2+\frac{\mu}{3}\|\nabla \cdot\be_h^n\|_0^2+\frac{\mu}{6}\|\nabla \cdot\be_h^{n-1}\|_0^2,
\end{eqnarray}
and, in the variable stepsize case, for $\Gamma$ the constant in~\eqref{eq:cond_omega} and~$\alpha$ any positive constant,
the following bound holds:
\begin{eqnarray}\label{eq:nonli1-0_var-si}
&&|b(\hat \bs_h^n \bs_h^{n+1},\be_h^{n+1})-b(\hat\bu_h^n,\bu_h^{n+1},\be_h^{n+1})|\le
\left(\frac{\|\nabla \bs_h^{n+1}\|_\infty}{2}+\frac{9\alpha}{2\mu}\|\bs_h^{n+1}\|_\infty^2\right)\|\be_h^{n+1}\|_0^2
\nonumber\\
&&\quad+\frac{(1+2\Gamma)(1+\Gamma)}{2}\|\nabla \bs_h^{n+1}\|_\infty\|\be_h^n\|_0^2
+\frac{\Gamma(1+2\Gamma)}{2}\|\nabla\bs_h^{n+1}\|_\infty\|\be_h^{n-1}\|_0^2
\nonumber\\
&&\quad+\frac{\mu(1+2\Gamma)(1+\Gamma)}{18\alpha}\|\nabla \cdot\be_h^n\|_0^2
\quad+\frac{\mu\Gamma(1+2\Gamma)}{18\alpha}\|\nabla \cdot\be_h^{n-1}\|_0^2.
\end{eqnarray}
\end{lema}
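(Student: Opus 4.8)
I would prove this by mimicking the proof of Lemma~\ref{le:nl}, exploiting the fact that in the semi-implicit case the decomposition of the nonlinear difference closes \emph{exactly}, leaving no residual term that needs the inverse inequality. Writing $\be_h^{n+1}=\bs_h^{n+1}-\bu_h^{n+1}$ and $\hat\be_h^n=\hat\bs_h^n-\hat\bu_h^n$ (the hat operator being linear in its argument), I would add and subtract $b(\hat\bu_h^n,\bs_h^{n+1},\be_h^{n+1})$ to obtain
\begin{equation*}
b(\hat\bs_h^n,\bs_h^{n+1},\be_h^{n+1})-b(\hat\bu_h^n,\bu_h^{n+1},\be_h^{n+1})
= b(\hat\be_h^n,\bs_h^{n+1},\be_h^{n+1})+b(\hat\bu_h^n,\be_h^{n+1},\be_h^{n+1}).
\end{equation*}
By the skew-symmetry property~\eqref{eq:skew} the last term vanishes, so the whole difference reduces to $b(\hat\be_h^n,\bs_h^{n+1},\be_h^{n+1})$. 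This is the only structural difference with Lemma~\ref{le:nl}: because the correction $\bs_h^{n+1}-\bu_h^{n+1}$ sits in the middle slot of $b$ and the test function in the last slot coincides with it, there is no analogue of the term $b(\hat\bu_h^n,\be_h^{n+1}-\hat\be_h^n,\be_h^{n+1})$, and hence no CFL-type factor $c_{\rm inv}h^{-1}$.

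It then remains to bound $|b(\hat\be_h^n,\bs_h^{n+1},\be_h^{n+1})|$ exactly as in~\eqref{eq:hacefal}. Using $B(\bu,\bv)=(\bu\cdot\nabla)\bv+\tfrac12(\nabla\cdot\bu)\bv$ and Hölder's inequality,
\begin{equation*}
|b(\hat\be_h^n,\bs_h^{n+1},\be_h^{n+1})|\le \|\hat\be_h^n\|_0\|\nabla\bs_h^{n+1}\|_\infty\|\be_h^{n+1}\|_0+\|\bs_h^{n+1}\|_\infty\|\nabla\cdot\hat\be_h^n\|_0\|\be_h^{n+1}\|_0,
\end{equation*}
and I would then apply Young's inequality to each product, choosing the weights so that the first product yields $\tfrac12\|\nabla\bs_h^{n+1}\|_\infty(\|\be_h^{n+1}\|_0^2+\|\hat\be_h^n\|_0^2)$ and the second yields $\tfrac{9}{2\mu}\|\bs_h^{n+1}\|_\infty^2\|\be_h^{n+1}\|_0^2+\tfrac{\mu}{18}\|\nabla\cdot\hat\be_h^n\|_0^2$, the factor $\mu/18$ being what later gets absorbed by the grad-div term in the energy argument. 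For the variable-step version~\eqref{eq:nonli1-0_var-si} I would keep a free parameter $\alpha$ in the Young step applied to the divergence product, producing $\tfrac{9\alpha}{2\mu}\|\bs_h^{n+1}\|_\infty^2\|\be_h^{n+1}\|_0^2+\tfrac{\mu}{18\alpha}\|\nabla\cdot\hat\be_h^n\|_0^2$.

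Finally I would eliminate the hatted quantities. In the fixed-step case $\hat\be_h^n=2\be_h^n-\be_h^{n-1}$, so $\|\hat\be_h^n\|_0^2\le 6\|\be_h^n\|_0^2+3\|\be_h^{n-1}\|_0^2$ and likewise for $\|\nabla\cdot\hat\be_h^n\|_0^2$, which turns the coefficients into $3\|\nabla\bs_h^{n+1}\|_\infty$, $\tfrac32\|\nabla\bs_h^{n+1}\|_\infty$, $\tfrac{\mu}{3}$, $\tfrac{\mu}{6}$ as claimed in~\eqref{eq:nonli1-0-si}. In the variable-step case $\hat\be_h^n=(1+\omega_n)\be_h^n-\omega_n\be_h^{n-1}$, so from the elementary estimate $\|\lambda a+\rho b\|_0^2\le |\lambda|(|\lambda|+|\rho|)\|a\|_0^2+|\rho|(|\lambda|+|\rho|)\|b\|_0^2$ together with the upper bound $\omega_n\le\Gamma$ from~\eqref{eq:cond_omega}, one obtains $\|\hat\be_h^n\|_0^2\le(1+\Gamma)(1+2\Gamma)\|\be_h^n\|_0^2+\Gamma(1+2\Gamma)\|\be_h^{n-1}\|_0^2$, and the same bound for $\|\nabla\cdot\hat\be_h^n\|_0^2$; substituting these yields~\eqref{eq:nonli1-0_var-si}. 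The case $n=0$ is covered by the convention~\eqref{eq:notation0} together with $\be_h^0=0$. I do not expect any genuine obstacle here: the only thing to watch is the bookkeeping of the constants through the Young steps and the $\alpha$-dependence in the variable case, the argument being strictly simpler than that of Lemma~\ref{le:nl} precisely because the inverse-inequality term is absent.
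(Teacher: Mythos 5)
Your proposal is correct and takes essentially the same route as the paper: add and subtract $b(\hat\bu_h^n,\bs_h^{n+1},\be_h^{n+1})$, use skew-symmetry to remove $b(\hat\bu_h^n,\be_h^{n+1},\be_h^{n+1})$, and bound the remaining term $b(\hat\be_h^n,\bs_h^{n+1},\be_h^{n+1})$ exactly as $b(\hat\be_h^n,\hat\bs_h^n,\be_h^{n+1})$ was bounded in Lemma~\ref{le:nl}. Your bookkeeping of the Young weights and of the elimination of the hatted quantities (both fixed and variable step, including the $\alpha$-dependence) reproduces the constants in \eqref{eq:nonli1-0-si} and \eqref{eq:nonli1-0_var-si}.
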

\begin{proof} By adding $\pm b(\hat\bu_h^n,\bs_h^{n+1},\be_h^{n+1})$  and taking into account that
$b(\hat\bu_h^n,\be_h^{n+1},\be_h^{n+1})=0$, we have
$$
|b(\hat \bs_h^n ,\bs_h^{n+1},\be_h^{n+1})-b(\hat\bu_h^n,\bu_h^{n+1},\be_h^{n+1})|
=
|b(\hat\be_h^n,\bs_h^{n+1},\be_h^{n+1})|,
$$
which can be bounded as the term~$|b(\hat\be_h^n,\hat \bs_h^{n},\be_h^{n+1})|$ in the previous lemma.

\end{proof}

\begin{remark}\label{semimp-nl}\rm We notice the similarity between the bounds~in Lemmas~\ref{le:nl} and~\ref{le:nl-si}. Indeed, bounds~\eqref{eq:nonli1-0-si} and~\eqref{eq:nonli1-0_var-si} differ from~\eqref{eq:nonli1-0} and~\eqref{eq:nonli1-0_var} in that $\hat \bs_h^n$ in~\eqref{eq:nonli1-0} and~\eqref{eq:nonli1-0_var} is replaced
by~$\bs_h^{n+1}$ in \eqref{eq:nonli1-0-si} and~\eqref{eq:nonli1-0_var-si} and, more importantly, the last term,
$2\|\hat \bu_h^n\|_\infty c_{\rm inv} h^{-1}\|\be_h^{n+1}-\hat \be_h^n\|_0\|\be_h^{n+1}\|_0$ on the right-hand side
of~\eqref{eq:nonli1-0} and~\eqref{eq:nonli1-0_var} is not present in~\eqref{eq:nonli1-0-si} and~\eqref{eq:nonli1-0_var-si}.
We will see that this term gives rise to a CFL type condition that affects the IMEX~\eqref{fully_discrete} method but not the semi-implicit method~\eqref{semimp}.
\end{remark}

%

We now estimate the truncation errors.  For $\tau_2^{n+1}$, in view of~\eqref{eq:pressurel2} and~\eqref{eq:cotanewpro}, it easily follows that
 \begin{equation}
 \label{eq:norm_t2}
 \left\| \tau_2^{n+1}\right\|_0 \le C\bigl(\mu\left\| \bu^{n+1}\right\|_{k+1} h^{k} + \left\| p^{n+1}\right\|_{l+1}h^{l+1}\bigr).
 \end{equation}
For $\tau_1^{n+1}$, $\tau_3^{n+1}$ and~$\tau_4^{n+1}$ we treat separately the cases $n\ge 1$ and~$n=0$.

\begin{lema}\label{le:trun} The following bounds hold for $n\ge 1$.
\begin{align}
 \label{eq:norm_t1}
 \left\| \tau_1^{n+1}\right\|_0^2 &\le \frac{4h^{2k}}{\Delta t_n}\int_{t_n}^{t_{n+1}} \left\| \bu_t(t)\right\|_{k}^2 \,dt
 +\frac{2h^{2k}}{\Delta t_{n-1}} \int_{t_{n-1}}^{t_{n}} \left\| \bu_t(t)\right\|_{k}^2 \,dt
 \\
 &{}+ \frac{(1+\omega_n)^2}{10} (\Delta t_n)^3 \int_{t_{n}}^{t_{n+1}} \left\|\bu_{ttt}(t)\right\|_0^2\,ds
 +\frac{\omega_n^2}{10}(\Delta t_n + \Delta t_{n-1})^3\int_{t_{n-1}}^{t_{n+1}} \left\|\bu_{ttt}(t)\right\|_0^2\,dt.
 \nonumber
 \end{align}
 \begin{align}
\label{eq:cota_t3}
\left| (\tau_3^{n+1},\bv_h)\right| \le &C\Bigl((1+\omega_n)\left|\Omega\right|^{1/d}\left\|\bu\right\|_{L^\infty(W^{1,\infty})}h^k
\left\|\bu\right\|_{L^\infty(H^{k+1})}
\nonumber \\
&{}+ \left\|\bu\right\|_{L^\infty(W^{1,\infty})}
\left\|\hat\bu^n - \bu^{n+1}\right\|_0+ \left\|\bu\right\|_{L^\infty(L^\infty)}
\left\|\hat\bu^n - \bu^{n+1}\right\|_1\Bigr)\left\|\bv_h\right\|_0,
\end{align}
 \begin{align}
\label{eq:cota_t4}
\left| (\tau_4^{n+1},\bv_h)\right| \le &C\Bigl((1+\omega_n)\left|\Omega\right|^{1/d}\left\|\bu\right\|_{L^\infty(W^{1,\infty})}h^k
\left\|\bu\right\|_{L^\infty(H^{k+1})}
\nonumber \\
&{}+ \left\|\bu\right\|_{L^\infty(W^{1,\infty})}
\left\|\hat\bu^n - \bu^{n+1}\right\|_0\Bigr)\left\|\bv_h\right\|_0,
\end{align}

where
 \begin{align}
 \label{eq:extrapol_trl}
 \left\|  \bu^{n+1} -\hat \bu^{n}\right\|_l\le &\frac{1+\omega_n}{\sqrt{3}} (\Delta t_n)^{3/2}
 \biggl(\int_{t_{n}}^{t_{n+1}} \left\| \bu_{tt}(t)\right\|_l^2\,dt\biggr)^{1/2}
 \nonumber\\
 &{} +
\frac{\omega_n}{\sqrt{3}} (\Delta t_n+\Delta t_{n-1})^{3/2}
 \biggl(\int_{t_{n-1}}^{t_{n+1}} \left\| \bu_{tt}(t)\right\|_l^2\,dt\biggr)^{1/2},\qquad l=0,1.
 \end{align}
\end{lema}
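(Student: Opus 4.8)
The plan is to establish each of the four bounds by isolating the relevant Taylor-type remainder and controlling it with an integral form of the remainder together with Cauchy–Schwarz. The starting point is the exact identity, valid for $n\ge 1$,
\[
{\cal D}\bu^{n+1} = D\bu^{n+1} + \frac{\omega_{n-1}}{1+\omega_{n-1}}\bigl(\bu^{n+1}-\hat\bu^{n}\bigr)
= \Delta t_n\,\bu_t^{n+1} + \Bigl({\cal D}\bu^{n+1}-\Delta t_n\,\bu_t^{n+1}\Bigr),
\]
which after division by $\Delta t_n$ gives $\tau_1^{n+1}$ a natural split into a ``projection part'' and a ``consistency part''. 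For the projection part I would write $\frac{1}{\Delta t_n}D({\bs}_h-\bu)^{n+1}$ and $\frac{1}{\Delta t_n}({\bs}_h-\bu)^{n+1}$–type terms as $\frac{1}{\Delta t_n}\int_{t_n}^{t_{n+1}}(\bu-{\bs}_h)_t(t)\,dt$ (and the analogous integral over $[t_{n-1},t_n]$ coming from the $\bu^{n+1}-\hat\bu^n$ piece), apply Cauchy–Schwarz in time, and invoke the $\partial_t(\bu-{\bs}_h)$ version of \eqref{eq:cotanewpro} from Remark \ref{rem:estimate_partial_t}; this produces the two $h^{2k}$ terms in \eqref{eq:norm_t1} with the stated weights $4/\Delta t_n$ and $2/\Delta t_{n-1}$, the numerical constants being generated by expanding $(a+b)^2\le 2a^2+2b^2$ together with the coefficient $\omega_{n-1}/(1+\omega_{n-1})\le 1$. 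For the consistency part I would use the exact integral remainder of the BDF2 quotient: ${\cal D}\bu^{n+1}-\Delta t_n\bu_t^{n+1}$ is a linear combination of second-difference-type quantities whose Peano kernel representation involves $\bu_{ttt}$ over $[t_{n-1},t_{n+1}]$; writing it as $\int$ of $\bu_{ttt}$ against an explicit kernel supported on the appropriate intervals, and bounding the kernel in $L^2_t$, yields exactly the two $\bu_{ttt}$ terms with the $(1+\omega_n)^2(\Delta t_n)^3/10$ and $\omega_n^2(\Delta t_n+\Delta t_{n-1})^3/10$ weights. The constant $1/10$ comes from the $L^2$ norm of the quadratic Peano kernel, i.e. $\int_0^1 s^2(1-s)^2\,ds$ type integrals, after the Cauchy–Schwarz step.

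For \eqref{eq:extrapol_trl} the key observation is that $\hat\bu^n$ is the value at $t_{n+1}$ of the linear interpolant through $(t_{n-1},\bu^{n-1})$ and $(t_n,\bu^n)$, so $\bu^{n+1}-\hat\bu^n$ is precisely the linear-extrapolation error, which has the exact Peano representation
\[
\bu^{n+1}-\hat\bu^n = \int_{t_n}^{t_{n+1}} K_1(t)\,\bu_{tt}(t)\,dt - \omega_n\int_{t_{n-1}}^{t_n} K_2(t)\,\bu_{tt}(t)\,dt,
\]
with explicit piecewise-linear kernels; taking $H^l$ norms, splitting the two contributions, and applying Cauchy–Schwarz in time gives the two terms with prefactors $(1+\omega_n)/\sqrt3$ and $\omega_n/\sqrt3$ and the stated time powers, the $1/\sqrt3$ again being $\bigl(\int_0^1(1-s)^2\,ds\bigr)^{1/2}$ essentially. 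I would prove this first, since \eqref{eq:extrapol_trl} feeds directly into the estimates for $\tau_3$ and $\tau_4$.

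For \eqref{eq:cota_t3} I would use skew-symmetry to write the difference of trilinear forms compactly, then add and subtract $b(\cdot,\cdot,\cdot)$ terms to split $b(\hat{\bs}_h^n,\hat{\bs}_h^n,\bv_h)-b(\bu^{n+1},\bu^{n+1},\bv_h)$ as (i) a part measuring ${\bs}_h^n-\bu^n$ type errors, controlled by \eqref{eq:cotanewpro} (the $h^k\|\bu\|_{L^\infty(H^{k+1})}$ term, with the $(1+\omega_n)$ coming from $\hat{\bs}_h^n$ involving both $\bs_h^n$ and $\bs_h^{n-1}$), and (ii) a part measuring $\hat\bu^n-\bu^{n+1}$, controlled by estimating $b$ using $\|\bu\|_{L^\infty(W^{1,\infty})}$ and $\|\bu\|_{L^\infty(L^\infty)}$ and moving derivatives appropriately to land on $\|\hat\bu^n-\bu^{n+1}\|_0$ and $\|\hat\bu^n-\bu^{n+1}\|_1$. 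Here it is convenient to put one factor of $\bu$-regularity in $L^\infty$ and the extrapolation error in $L^2$ or $H^1$ as dictated by the two ways of distributing the spatial derivative in $b(\cdot,\cdot,\bv_h)=((\cdot\cdot\nabla)\cdot,\bv_h)+\tfrac12((\nabla\cdot\cdot)\cdot,\bv_h)$. The estimate \eqref{eq:cota_t4} is the same argument but easier: since $\tau_4$ uses $b(\hat{\bs}_h^n,{\bs}_h^{n+1},\bv_h)$ with the second slot already at time $t_{n+1}$, only the first slot produces an extrapolation-error contribution, so no $\|\hat\bu^n-\bu^{n+1}\|_1$ term is needed, and I would simply repeat the split with one fewer term.

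The main obstacle is bookkeeping rather than conceptual: getting the \emph{exact} numerical constants ($4$, $2$, $1/10$, $1/\sqrt3$) and the precise $\omega_n$-dependence requires writing the Peano kernels of the variable-step BDF2 quotient and of the linear extrapolation explicitly and evaluating their $L^2_t$ norms carefully, keeping track of which remainders live on $[t_n,t_{n+1}]$ versus $[t_{n-1},t_{n+1}]$. Everything else (the trilinear-form splittings, the uses of \eqref{eq:cotanewpro} and Remark \ref{rem:estimate_partial_t}) is routine; the delicate point is not to lose the $\Delta t_{n-1}$ (as opposed to $\Delta t_n$) in the second term of \eqref{eq:norm_t1}, which is exactly where the variable-step structure enters.
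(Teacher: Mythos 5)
Your plan is essentially the paper's own proof: the same projection-plus-consistency splitting of $\tau_1^{n+1}$ (difference quotients of $\bs_h-\bu$ written as time integrals and bounded via Remark~\ref{rem:estimate_partial_t} and \eqref{eq:cotanewpro}, plus the variable-step BDF2 consistency error expressed through Taylor remainders in $\bu_{ttt}$), the same Taylor/integral-remainder treatment of the extrapolation error \eqref{eq:extrapol_trl}, and the same add-and-subtract decompositions of $\tau_3^{n+1}$ and $\tau_4^{n+1}$ estimated with \eqref{eq:cotanewpro} and the $L^\infty$ bounds \eqref{cotainfty0}--\eqref{cotainfty1}. Two cosmetic remarks: in your identity for ${\cal D}\bu^{n+1}$ the coefficient should be $\omega_n/(1+\omega_n)$, not $\omega_{n-1}/(1+\omega_{n-1})$, and the absence of $\left\|\hat\bu^n-\bu^{n+1}\right\|_1$ in \eqref{eq:cota_t4} rests on the fact that the $\tfrac12(\nabla\cdot\,\cdot)$ part of $B$ vanishes when its first argument is a combination of exact (solenoidal) velocities, rather than on skew-symmetry, which is not actually needed for $\tau_3$ or $\tau_4$.
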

\begin{proof}
We first write $\tau_1=(\Delta t_{n})^{-1}{\cal D}(\bs_h^{n+1}-\bu^{n+1}) + (\Delta t_{n})^{-1}({\cal D}\bu^{n+1} -
\bu_t^{n+1})$ and, further, we notice that we can write for the first term
\begin{align*}
\frac{1}{\Delta t_n} {\cal D}(\bs_h^{n+1}-\bu^{n+1})=& \Bigl(1+\frac{\omega_n}{1+\omega_n}\Bigr)\frac{1}{\Delta t_n} D (\bs_h^{n+1}-\bu^{n+1}) - \frac{\omega_n^2}{1+\omega_n} \frac{1}{\Delta t_{n}} D (\bs_h^{n}-\bu^{n})
\nonumber\\
{}= &\Bigl(1+\frac{\omega_n}{1+\omega_n}\Bigr)\frac{1}{\Delta t_n}\int_{t_n}^{t_{n+1}}\frac{d}{dt} (\bs_h(t)-\bu(t))\,dt
\nonumber\\
&{}
- \frac{\omega_n}{1+\omega_n} \frac{1}{\Delta t_{n-1}}\int_{t_{n-1}}^{t_{n}}\frac{d}{dt} (\bs_h(t)-\bu(t))\,dt.
\end{align*}
Furthermore, Taylor expansion with integral remainder shows that the second term can be bounded by
\begin{align*}
\frac{1}{\Delta t_n} ({\cal D}\bu^{n+1} -
\bu_t^{n+1})
 =& (1+\omega_n)\frac{1}{2\Delta t_n} \int_{t_{n}}^{t_{n+1}} (t-t_{n})^2\bu_{ttt}(t)\,dt\nonumber\\
 &{}-
 \frac{\omega_n^2}{2(1+\omega_n)}\frac{1}{\Delta t_n} \int_{t_{n-1}}^{t_{n+1}} (t-t_{n-1})^2\bu_{ttt}(t)\,dt.
\end{align*}
We also notice that
$$
 \frac{\omega_n^2}{2(1+\omega_n)}\frac{1}{\Delta t_n} =  \frac{\omega_n}{2(1+\omega_n)}\frac{1}{\Delta t_{n-1}}
 = \frac{\omega_n}{2(\Delta t_n + \Delta t_{n-1})}.
 $$
Thus, applying~\eqref{eq:cotanewpro} and H\"older's inequality an easy calculation shows that \eqref{eq:norm_t1} holds.

For $\tau_3^{n+1}$, adding $\pm b(\hat \bu^{n},\hat  \bs^n_h,\bv_h) \pm b(\hat\bu^n,\hat \bu^n,\bv_h) \pm
 b(\hat\bu^n, \bu^{n+1}, \bv_h)$, we can write
\begin{align}
\label{eq:cota_t3_0}
 (\tau_3^{n+1},\bv_h) = &b(\hat\bs_h^n-\hat\bu^n,\hat\bs_h^n,\bv_h)+
 b(\hat \bu^n,\hat\bs_h^n-\hat\bu^n,\bv_h) + b(\hat \bu^n,\hat\bu^n-\bu^{n+1},\bv_h)
 \nonumber\\
 &{}+ b(\hat\bu^n-\bu^{n+1},\bu^{n+1},\bv_h).
\end{align}
Thus, by noticing that
$\left\|\hat\bs_h^n - \hat \bu^{n}\right\|_l\le 2(1+\omega_n)\left\|\bs_h-\bu\right\|_{L^\infty(H^l)}$, $l=0,1$, applying~\eqref{eq:cotanewpro}, H\"older's inequality and bounding $h\le \left|\Omega\right|^{1/d}$, when necessary, we get \eqref{eq:cota_t3}.
For $\tau_4$, adding $\pm b(\hat \bu^{n}, \bs_h^{n+1},\bv_h) \pm b(\hat\bu^n,\bs_h^{n+1},\bv_h)$ one gets
\begin{align}
\label{eq:cota_t4_0}
 (\tau_4^{n+1},\bv_h) = &b(\hat\bs_h^n-\hat\bu^n,\bs_h^{n+1},\bv_h)+
 b(\hat \bu^n-\bu^{n+1},\bs_h^{n+1},\bv_h) + b(\bu^{n+1},\bs_h^{n+1}-\bu^{n+1},\bv_h),
\end{align}
from where, arguing as with~$\tau_3^{n+1}$, the bound~\eqref{eq:cota_t4} follows.

 Also, by means of Taylor expansion with integral remainder and applying H\"older's inequality, it is easy to show \eqref{eq:extrapol_trl}.
  \end{proof}

We now deal with the truncation errors in the first step. Recall  that
$$
\tau_1^{1}=\frac{1}{\Delta t_0}{ D}\bs_h^{1}-\bu_t^{1}
$$
and $\tau_3^1$ are defined as in \eqref{lostrun} but with $\hat \bs_h^0=\bs_h^0$.
\begin{lema}\label{le:trun1} We have the following bounds,
\begin{align}
\label{trunc11}
\|\tau_1^1\|_0^2\le& C\Bigl(\frac{1}{\Delta t_0}h^{2k}\|\bu_t\|_{L^2(H^k)} +(\Delta t_0)^2\|\bu_{tt}\|_{L^\infty(L^2)}^2\Bigr),
\\
\label{trunc13}
\left\|\tau_3^1\right\|_0^2\le &C\Bigl( \left|\Omega\right|^{2/d}\left\|\bu\right\|_{L^\infty(W^{1,\infty})}^2h^{2k}\nonumber\\
&\quad{}+(\Delta t_0)^2 \bigl( \left\|\bu\right\|_{L^{\infty}(W^{1,\infty})}^2
\left\| \bu_t\right\|_{L^\infty(L^2)}^2 + \left\|\bu\right\|_{L^{\infty}(L^\infty)}^2
\left\| \bu_t\right\|_{L^\infty(H^1)}^2\bigr)\Bigr).
\end{align}
\end{lema}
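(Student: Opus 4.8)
The plan is to derive the two bounds of Lemma~\ref{le:trun1} by specializing the arguments already used for $n\ge 1$ in Lemma~\ref{le:trun} to the first step, where by the convention~\eqref{eq:notation0} we have $\hat\bu^0=\bu^0$ and ${\cal D}\bu^1=D\bu^1$, and where the BDF2 extrapolation degenerates to a first order (Euler) scheme. First I would treat $\tau_1^1$. Writing, exactly as in the proof of Lemma~\ref{le:trun}, $\tau_1^1=(\Delta t_0)^{-1}D(\bs_h^1-\bu^1)+(\Delta t_0)^{-1}(D\bu^1-\bu_t^1)$, the first term equals $(\Delta t_0)^{-1}\int_{t_0}^{t_1}\frac{d}{dt}(\bs_h(t)-\bu(t))\,dt$, which by Cauchy--Schwarz and the error bound~\eqref{eq:cotanewpro} applied to $\bu_t-\partial_t\bs_h$ (see Remark~\ref{rem:estimate_partial_t}) is bounded in $L^2$ by $C(\Delta t_0)^{-1/2}h^k\|\bu_t\|_{L^2(t_0,t_1;H^k)}$; the second term, by Taylor expansion with integral remainder, equals $(\Delta t_0)^{-1}\int_{t_0}^{t_1}(t_0-t)\bu_{tt}(t)\,dt$ and is bounded by $C\Delta t_0\|\bu_{tt}\|_{L^\infty(L^2)}$. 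Squaring and adding gives~\eqref{trunc11}.

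Next I would treat $\tau_3^1$. Here $(\tau_3^1,\bv_h)=b(\bs_h^0,\bs_h^0,\bv_h)-b(\bu^1,\bu^1,\bv_h)$ since $\hat\bs_h^0=\bs_h^0$. I would decompose it in the same spirit as~\eqref{eq:cota_t3_0}, now adding $\pm b(\bu^0,\bs_h^0,\bv_h)\pm b(\bu^0,\bu^0,\bv_h)\pm b(\bu^0,\bu^1,\bv_h)$, so that
\[
(\tau_3^1,\bv_h)=b(\bs_h^0-\bu^0,\bs_h^0,\bv_h)+b(\bu^0,\bs_h^0-\bu^0,\bv_h)+b(\bu^0,\bu^0-\bu^1,\bv_h)+b(\bu^0-\bu^1,\bu^1,\bv_h).
\]
For the first two terms I use $\|\bs_h^0-\bu^0\|_l\le Ch^k\|\bu^0\|_{k+1}$ from~\eqref{eq:cotanewpro} (for $l=0$ one can even use $h^{k+1}$, but $h^k$ suffices), together with the $W^{1,\infty}$ and $L^\infty$ norms of $\bu$ and, where needed, $h\le|\Omega|^{1/d}$; since $\bu_h^0=\bs_h^0$ by assumption these are the terms that supply the $h^{2k}$ contribution. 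For the last two terms I write $\bu^1-\bu^0=\int_{t_0}^{t_1}\bu_t(t)\,dt$ and bound $\|\bu^1-\bu^0\|_l\le \Delta t_0^{1/2}\|\bu_t\|_{L^2(t_0,t_1;H^l)}\le\Delta t_0\|\bu_t\|_{L^\infty(H^l)}$ for $l=0,1$, which yields the $(\Delta t_0)^2$ contribution with the stated $W^{1,\infty}$--$L^2$ and $L^\infty$--$H^1$ norm combinations. Collecting all four terms, using $|b(\bv,\bw,\bz)|\le C(\|\bv\|_0\|\nabla\bw\|_\infty+\|\bv\|_\infty\|\nabla\cdot\bw\|_0)\|\bz\|_0\le C\|\bv\|_1\|\bw\|_{W^{1,\infty}}\|\bz\|_0$-type estimates, squaring and summing gives~\eqref{trunc13}.

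The computations here are entirely routine; the only points requiring a little care are bookkeeping ones: making sure the first-step convention~\eqref{eq:notation0} is consistently used so that no spurious $\omega_0$-dependent terms appear, invoking Remark~\ref{rem:estimate_partial_t} to legitimize the bound on $\partial_t(\bu-\bs_h)$ needed for the first term of $\tau_1^1$, and choosing which of $H^k$/$H^{k+1}$ exponents to keep in~\eqref{eq:cotanewpro} so that the final bounds match the stated form. There is no genuine obstacle — this lemma is the "$n=0$ base case" counterpart of Lemma~\ref{le:trun}, and the slight simplification (no extrapolation, no $\bu_{ttt}$ terms because Euler is only first order and the $\Delta t_0^2$ term already absorbs the consistency error) is exactly what one expects.
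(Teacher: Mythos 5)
Your proposal is correct and follows essentially the same route as the paper: the paper bounds $\tau_1^1$ by the same splitting $(\Delta t_0)^{-1}D(\bs_h^1-\bu^1)+(\Delta t_0)^{-1}(D\bu^1-\bu_t^1)$ as in Lemma~\ref{le:trun} (using Remark~\ref{rem:estimate_partial_t} and a first-order Taylor remainder), and bounds $\tau_3^1$ by specializing the decomposition~\eqref{eq:cota_t3_0} to $n=0$ with $\hat\bs_h^0=\bs_h^0$, $\hat\bu^0=\bu^0$ and $\|\bu^1-\bu^0\|_l\le\Delta t_0\|\bu_t\|_{L^\infty(H^l)}$, exactly as you do. The only blemish is a cosmetic one in your schematic bound for $b$ (the divergence in $B(\bv,\bw)$ acts on the first argument, so the term should read $\|\nabla\cdot\bv\|_0\|\bw\|_\infty$ rather than $\|\bv\|_\infty\|\nabla\cdot\bw\|_0$), which does not affect the final norm combinations you state.
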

\begin{proof}
For~$\tau_1^1$, by arguing as in \eqref{eq:norm_t1} we have
$$\|\tau_1^1\|_0^2\le \frac{C}{\Delta t_0}h^{2k}\int_{t_0}^{t_1}\|\bu_t\|_k^2 \,dt+C \Delta t_0\int_{t_0}^{t_1}\|\bu_{tt}\|_0^2 ,\,dt,
$$
from where~\eqref{trunc11} follows.
To prove~\eqref{trunc13} we go back to~\eqref{eq:cota_t3_0}, and repeat the argument there, but now taking into account that
for~$n=0$, $\hat\bs_h^0=\bs_0$ and~$\hat\bu^0=\bu^0$, so that $\left\|\hat \bs_h^0 -\bu^0\right\|_l\le Ch^{k+1-l}\left\|\bu^0\right\|_{k+1}$ and
$$
\left\| \hat \bu^0 -\bu^1\right\|_l =\biggl\| \int_{t_0}^{t_1} \bu_t\,dt\biggr\|_l \le \Delta t_0 \left\|\bu_t\right\|_{L^\infty(H^l)}.
$$
Then~\eqref{trunc13} follows easily. 
\end{proof}


\subsection{Fixed stepsize}\label{se:fixed}
We consider now the case where $\Delta t_n=\Delta t=T/N$, $n=0,1,\ldots,N-1$.
\begin{Theorem}\label{th1} Fix $\kappa>0$ and let $\Delta t$ and $h$ satisfy the following CFL-type condition
\begin{eqnarray}\label{CFL-u}
64c_{\rm inv}^2\left\|\bu\right\|_{L^\infty(L^\infty)}^2\frac{\Delta t}{h^2}\le \frac{\kappa}{T},
\end{eqnarray}
and
\begin{eqnarray}\label{delta_t_fixed}
\Delta t\Bigl(4\frac{\kappa+1}{T}+ M^k\Bigr)\le \frac{1}{2},\quad k=2,\ldots,n+1,
\end{eqnarray}
with $M^k$ defined in \eqref{Mjota}.
Then, there exist a positive constant $h_0$, such that for $h<h_0$ the error $\be_h^{n+1}=\bs_h^{n+1}-\bu_h^{n+1}$ of the IMEX method~\eqref{fully_discrete}, satisfies the following bound for $1\le n\le N-1=T/\Delta t -1$,
\begin{eqnarray}\label{er:cota_fin_th}
&&\|\be_h^{n+1}\|_0^2+\sum_{k=2}^{n+1}\nu \Delta t \|\nabla \be_h^k\|_0^2
+\mu\sum_{k=2}^{n+1}\Delta t \|\nabla\cdot \be_h^k\|_0^2
\le \nonumber\\
&&\quad  e^{8(\kappa+1) +2T\max_jM^j}
\left(\left(C_0^2+TC_3^2\right)(\Delta t)^4 + T\left(C_1^2h^{2k} + C_2^2h^{2l+2}\right)\right),
\end{eqnarray}
where
the constants $C_i^2$ are defined in
\eqref{eq:C2} and \eqref{eq:C0}.
\end{Theorem}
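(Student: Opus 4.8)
The plan is to test the error equation~\eqref{fully_error} with $\bv_h = \be_h^{n+1}\in V_h^{\rm div}$ and absorb each term. First I would treat the time-difference term: using the identity $({\cal D}\be_h^{n+1},\be_h^{n+1})$ together with the standard BDF2 "$G$-stability" / telescoping algebra, I would obtain a lower bound of the form $\frac{1}{2\Delta t}\bigl(\|\be_h^{n+1}\|_0^2 - \|\be_h^n\|_0^2 + \|2\be_h^{n+1}-\be_h^n\|_0^2 - \|2\be_h^n-\be_h^{n-1}\|_0^2\bigr) + \frac{1}{2\Delta t}\|\be_h^{n+1} - 2\be_h^n+\be_h^{n-1}\|_0^2$ (the fixed-step case of~\eqref{eq:notation0}), so that summing over $n$ telescopes. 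The viscous term $\nu\|\nabla\be_h^{n+1}\|_0^2$ and the grad-div term $\mu\|\nabla\cdot\be_h^{n+1}\|_0^2$ are kept on the left-hand side. For the nonlinear term I would invoke Lemma~\ref{le:nl}, which already produces the decomposition into an $\|\be_h^{n+1}\|_0^2$ contribution (Gronwall-type, with coefficient controlled by $\|\nabla\hat\bs_h^n\|_\infty$ and $\mu^{-1}\|\hat\bs_h^n\|_\infty^2$, both $\nu$-independent via~\eqref{cotainfty0}--\eqref{cotainfty1}), lower-order $\|\be_h^n\|_0^2,\|\be_h^{n-1}\|_0^2$ terms, grad-div terms $\tfrac{\mu}{3}\|\nabla\cdot\be_h^n\|_0^2$ and $\tfrac{\mu}{6}\|\nabla\cdot\be_h^{n-1}\|_0^2$ which are absorbed by the summed left-hand side (coefficients $\tfrac13+\tfrac16<1$), and the critical last term $2\|\hat\bu_h^n\|_\infty c_{\rm inv}h^{-1}\|\be_h^{n+1}-\hat\be_h^n\|_0\|\be_h^{n+1}\|_0$.

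The heart of the argument is handling that last term, and this is where the CFL condition~\eqref{CFL-u} enters; I expect this to be the main obstacle. The quantity $\|\be_h^{n+1}-\hat\be_h^n\|_0 = \|\be_h^{n+1}-2\be_h^n+\be_h^{n-1}\|_0$ is exactly $\|{\cal D}^2\be_h^{n+1}\|_0$-type, i.e.\ it is controlled by the extra dissipative term $\frac{1}{2\Delta t}\|\be_h^{n+1}-2\be_h^n+\be_h^{n-1}\|_0^2$ coming from the BDF2 identity. So I would write $2\|\hat\bu_h^n\|_\infty c_{\rm inv}h^{-1}\|\be_h^{n+1}-\hat\be_h^n\|_0\|\be_h^{n+1}\|_0 \le \frac{1}{2\Delta t}\|\be_h^{n+1}-\hat\be_h^n\|_0^2 + 2\Delta t\, c_{\rm inv}^2\|\hat\bu_h^n\|_\infty^2 h^{-2}\|\be_h^{n+1}\|_0^2$; the first piece is absorbed by the BDF2 dissipation, and the second piece is a Gronwall term provided $\Delta t\, h^{-2}\|\hat\bu_h^n\|_\infty^2$ is under control. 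Here one must replace $\|\hat\bu_h^n\|_\infty$ by $\|\hat\bs_h^n\|_\infty + \|\hat\be_h^n\|_\infty$ and use the inverse inequality on $\|\hat\be_h^n\|_\infty \le C_{\rm inv}h^{-d/2}\|\hat\be_h^n\|_0$, which is why one needs a bootstrap/induction: assuming the target bound~\eqref{er:cota_fin_th} holds up to level $n$, $\|\be_h^j\|_0$ is small (order $(\Delta t)^2 + h^k$), so $\|\hat\be_h^n\|_\infty$ is negligible for $h<h_0$, leaving $\|\hat\bu_h^n\|_\infty^2 \le 16\|\bu\|_{L^\infty(L^\infty)}^2$ (the factor from $\|\hat\bs_h^n\|_\infty\le \|\bs_h^n\|_\infty+2\|\be_h^n\|_\infty$ and~\eqref{cotainfty0}); then~\eqref{CFL-u} gives $2\Delta t\,c_{\rm inv}^2\|\hat\bu_h^n\|_\infty^2 h^{-2}\le 2\cdot 64 c_{\rm inv}^2\|\bu\|_{L^\infty(L^\infty)}^2 (\Delta t/h^2)\cdot\frac12 \le \kappa/T$ up to the bookkeeping constants, supplying the $8(\kappa+1)$ in the exponential.

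Having arranged this, I would collect all terms: the left-hand side after summation $n'=1,\dots,n$ is $\|\be_h^{n+1}\|_0^2 + \|2\be_h^{n+1}-\be_h^n\|_0^2 + \sum \nu\Delta t\|\nabla\be_h^k\|_0^2 + \sum\mu\Delta t\|\nabla\cdot\be_h^k\|_0^2$ (minus the initial contributions, which vanish since $\be_h^0=0$ and $\be_h^1$ is handled via Lemma~\ref{le:trun1}), and the right-hand side is $\Delta t\sum_j \gamma_j\|\be_h^j\|_0^2 + \Delta t\sum_j\|\tau^{j}\|$-contributions, where the truncation-error sum is bounded via Lemma~\ref{le:trun} (the $\tau_1,\tau_3$ bounds) and~\eqref{eq:norm_t2} ($\tau_2$), yielding the $(\Delta t)^4$, $h^{2k}$, $h^{2l+2}$ terms with the constants $C_0,\dots,C_3$ of~\eqref{eq:C2}--\eqref{eq:C0}. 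Applying the discrete Gronwall inequality (Lemma~\ref{gronwall}), with $\gamma_j$ coming from the $\|\nabla\hat\bs_h^n\|_\infty + \mu^{-1}\|\hat\bs_h^n\|_\infty^2$ coefficients (these define $M^k$ in~\eqref{Mjota}) plus the CFL contribution $\kappa/T$, and noting condition~\eqref{delta_t_fixed} guarantees $k\gamma_j\le\tfrac12<1$ so that $\sigma_j\le 2$, the exponential factor becomes $\exp(2T\max_j M^j + 8(\kappa+1))$, giving~\eqref{er:cota_fin_th}. The induction closes because the resulting bound on $\|\be_h^{n+1}\|_0$ is exactly of the assumed order, so for $h<h_0$ small enough the smallness of $\|\hat\be_h^{n+1}\|_\infty$ used in the CFL step is re-established at the next level.
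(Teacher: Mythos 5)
Your proposal follows essentially the same route as the paper's proof: testing \eqref{fully_error} with $\be_h^{n+1}$, using the BDF2 G-stability identity to telescope, invoking Lemma~\ref{le:nl} with the convective term's critical piece split by Young's inequality so that $\|\be_h^{n+1}-\hat\be_h^n\|_0^2$ is absorbed by the BDF2 dissipation while the remaining factor $c_{\rm inv}^2\|\hat\bu_h^n\|_\infty^2\Delta t/h^2$ is controlled through the CFL condition, bounding the truncation and first-step errors via Lemmas~\ref{le:trun} and~\ref{le:trun1}, applying Lemma~\ref{gronwall} under \eqref{delta_t_fixed}, and closing with the same induction/bootstrap (inverse inequality plus the derived error bound) that keeps $\|\hat\bu_h^n\|_\infty\le 4\|\bu\|_{L^\infty(L^\infty)}$ so that \eqref{CFL-u} implies the working CFL assumption. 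The only deviation is a bookkeeping slip (the G-stability identity carries the factor $\tfrac{1}{4\Delta t}$, not $\tfrac{1}{2\Delta t}$, cf.~\eqref{eq:er2}), which only shifts the explicit constants and is repaired by choosing the Young split as in \eqref{eq:cota_ref}.
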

\begin{proof}
Taking $\bv_h=\be_h^{n+1}$ in \eqref{fully_error} we obtain
\begin{eqnarray}\label{eq:er2}
&&\frac{1}{4}\frac{1}{\Delta t}\|\be_h^{n+1}\|_0^2+\frac{1}{4}\frac{1}{\Delta t}\|\hat \be_h^{n+1}\|_0^2
+\frac{1}{4}\frac{1}{\Delta t }\|\be_h^{n+1}-\hat \be_h^n\|_0^2
\\
&&\quad-\frac{1}{4}\frac{1}{\Delta t }\|\be_h^n\|_0^2
-\frac{1}{4}\frac{1}{\Delta t }\|\hat \be_h^n\|_0^2
+\nu\|\nabla \be_h^{n+1}\|_0^2
\nonumber\\
&&\quad +\mu\|\nabla \cdot \be_h^{n+1}\|_0^2\le |b(\hat \bs_h^n,\hat \bs_h^n,\be_h^{n+1})-b(\hat\bu_h^n,\hat\bu_h^n,\be_h^{n+1})|
+\frac{T}{2}\|\tau_1^{n+1}\|_0^2+\frac{T}{2}\|\tau_3^{n+1}\|_0^2\nonumber\\
&&\quad+\frac{1}{T}\|\be_h^{n+1}\|_0^2+\frac{9}{2\mu}\|\tau_2^{n+1}\|_0^2+\frac{\mu}{18}\|\nabla \cdot\be_h^{n+1}\|_0^2.
\end{eqnarray}
We now apply \eqref{eq:nonli1-0} from Lemma~\ref{le:nl} and, further, we bound
\begin{equation}
\label{eq:cota_ref}
2\|\hat \bu_h^n\|_\infty c_{\rm inv} h^{-1}\|\be_h^{n+1}-\hat e_h^n\|_0\|\be_h^{n+1}\|_0
\le 
4c_{\rm inv}^2\|\hat \bu_h^n\|_\infty^2\frac{\Delta t}{h^2}\|\be_h^{n+1}\|_0^2 +
\frac{\|\be_h^{n+1}-\hat \be_h^n\|_0^2}{4\Delta t}.
\end{equation}
Then
\begin{eqnarray}\label{eq:nonli1}
|b(\hat \bs_h^n,\hat \bs_h^n,\be_h)-b(\hat\bu_h^n,\hat\bu_h^n,\be_h)|&\le&
\left(\frac{\|\nabla \hat \bs_h^n\|_\infty}{2}+\frac{9}{2\mu}\|\hat \bs_h^n\|_\infty^2+4c_{\rm inv}^2\|\hat \bu_h^{n}\|_\infty^2\frac{\Delta t}{h^2}\right)\|\be_h^{n+1}\|_0^2\nonumber\\
&&\quad +3\|\nabla \hat \bs_h^n\|_\infty\|\be_h^n\|_0^2+\frac{3}{2}\|\nabla \hat \bs_h^n\|_\infty\|\be_h^{n-1}\|_0^2
\\
&&\quad+\frac{\mu}{3}\|\nabla \cdot\be_h^n\|_0^2+\frac{\mu}{6}\|\nabla \cdot\be_h^{n-1}\|_0^2+\frac{\|\be_h^{n+1}-\hat \be_h^n\|_0^2}{4\Delta t}.
\nonumber
\end{eqnarray}
The last term on the right-hand side of \eqref{eq:nonli1} will be absorbed with the left-hand side of \eqref{eq:er2}. In the sequel, we assume that, for $0\le n\le N-1$,
\begin{eqnarray}\label{CFL}
4c_{\rm inv}^2\|\hat \bu_h^{n}\|_\infty^2\frac{\Delta t}{h^2}\le \frac{\kappa}{T}.
\end{eqnarray}
At the end of the proof we will show that~(\ref{CFL}) always holds for $h$ sufficiently small.

Inserting \eqref{eq:nonli1}
into 	\eqref{eq:er2} we reach
\begin{eqnarray}\label{eq:er3}
&&\frac{1}{4}\frac{1}{\Delta t}\|\be_h^{n+1}\|_0^2+\frac{1}{4}\frac{1}{\Delta t}\|\hat \be_h^{n+1}\|_0^2
-\frac{1}{4}\frac{1}{\Delta t }\|\be_h^n\|_0^2
-\frac{1}{4}\frac{1}{\Delta t }\|\hat \be_h^n\|_0^2
\nonumber\\
&&\quad+\nu\|\nabla \be_h^{n+1}\|_0^2 +\frac{17}{18}\mu\|\nabla \cdot \be_h^{n+1}\|_0^2\le 
\\
&&\left(\frac{\|\nabla \hat \bs_h^n\|_\infty}{2}+\frac{9}{2\mu}\|\hat \bs_h^n\|_\infty^2+\frac{\kappa+1}{T}\right)\|\be_h^{n+1}\|_0^2
+3\|\nabla \hat \bs_h^n\|_\infty\|\be_h^n\|_0^2+\frac{3}{2}\|\nabla \hat \bs_h^n\|_\infty\|\be_h^{n-1}\|_0^2
\nonumber\\
&&\quad+\frac{\mu}{3}\|\nabla \cdot\be_h^n\|_0^2+\frac{\mu}{6}\|\nabla \cdot\be_h^{n-1}\|_0^2+\frac{T}{2}\|\tau_1^{n+1}\|_0^2+\frac{T}{2}\|\tau_3^{n+1}\|_0^2+\frac{9}{2\mu}\|\tau_2^{n+1}\|_0^2.\nonumber
\end{eqnarray}
Taking the sum of terms in \eqref{eq:er3} we get
\begin{eqnarray}\label{eq:er4}
&&\frac{1}{4}\|\be_h^{n+1}\|_0^2+\frac{1}{4}\|\hat \be_h^{n+1}\|_0^2+\sum_{k=2}^{n+1}\nu\Delta t \|\nabla \be_h^k\|_0^2
+\frac{4\mu}{9}\sum_{k=2}^{n+1}\Delta t \|\nabla \cdot \be_h^k\|_0^2
\le \frac{1}{4}\|\be_h^1\|_0^2+\frac{1}{4}\|\hat \be_h^1\|_0^2\nonumber\\
&&\quad +\frac{\mu}{2}(\Delta t)\|\nabla \cdot \be_h^1\|_0^2
+\frac{\mu}{6}(\Delta t)\|\nabla \cdot \be_h^0\|_0^2
+\frac{3}{2}(\Delta t)\|\nabla \hat \bs_h^1\|_\infty\|\be_h^0\|_0^2\\
&&\quad+(\Delta t)\left(\frac{3}{2}\|\nabla \hat \bs_h^2\|_\infty
+3\|\nabla \hat \bs_h^1\|_\infty\right)\|\be_h^1\|_0^2
+\sum_{k=2}^{n+1}\Delta t\left(\frac{T}{2}\|\tau_1^k\|_0^2+\frac{T}{2}\|\tau_3^k\|_0^2+\frac{9}{2\mu}\|\tau_2^k\|_0^2\right)
\nonumber\\
&&\quad+\sum_{k=2}^{n+1}\Delta t \left(\frac{\|\nabla \hat \bs_h^{k-1}\|_\infty}{2}+\frac{9}{2\mu}\|\hat \bs_h^{k-1}\|_\infty^2
+\frac{ \kappa+1}{T}
+3\|\nabla \hat \bs_h^k\|_\infty+\frac{3}{2}\|\nabla \hat \bs_h^{k+1}\|_\infty\right)\|\be_h^k\|_0^2,\nonumber
\end{eqnarray}
where in the last term of \eqref{eq:er3} to simplify writing we assume $\|\nabla \hat \bs_h^{k+1}\|_\infty=0$ for $k=n+1$.
In the sequel, to simplify notation we define
\begin{equation}\label{Mjota}
M^k=4\biggl(\frac{\|\nabla \hat \bs_h^{k-1}\|_\infty}{2}+\frac{9}{2\mu}\|\hat \bs_h^{k-1}\|_\infty^2
+3\|\nabla \hat \bs_h^k\|_\infty+\frac{3}{2}\|\nabla \hat \bs_h^{k+1}\|_\infty\biggr)
\end{equation}
and
\begin{eqnarray}\label{E0}
E^0&=&4\biggl(\frac{1}{4}\|\be_h^1\|_0^2+\frac{1}{4}\|\hat \be_h^1\|_0^2+\frac{\mu}{2}(\Delta t)\|\nabla \cdot \be_h^1\|_0^2
+\frac{\mu}{6}(\Delta t)\|\nabla \cdot \be_h^0\|_0^2
+\frac{3}{2}(\Delta t)\|\nabla \hat \bs_h^1\|_\infty\|\be_h^0\|_0^2\nonumber\\
&&\quad+(\Delta t)\left(\frac{3}{2}\|\nabla \hat \bs_h^2\|_\infty
+3\|\nabla \hat \bs_h^1\|_\infty\right)\|\be_h^1\|_0^2\biggr).
\end{eqnarray}
Let us observe that all the terms in \eqref{Mjota} can be bounded in terms of $\|\bu\|_{L^\infty(L^\infty)}$ and
$\|\nabla \bu\|_{L^\infty(L^\infty)}$ applying \eqref{cotainfty0} and \eqref{cotainfty1}.
 
With the above simplifications we can write equation \eqref{eq:er4} in the form
\begin{align}\label{eq:er5}
\|\be_h^{n+1}\|_0^2+\sum_{k=2}^{n+1}\nu \Delta t \|\nabla \be_h^k\|_0^2
+\mu\sum_{k=2}^{n+1}\Delta t \|\nabla\cdot \be_h^k\|_0^2&
\le E^0+\sum_{k=2}^{n+1}\Delta t\Bigl(4\frac{\kappa+1}{T}+  M^k\Bigr)\|\be_h^k\|_0^2\\
&{}+2\sum_{k=2}^{n+1}\Delta t \left(T(\|\tau_1^k\|_0^2+\|\tau_3^k\|_0^2)+\frac{9}{\mu}\|\tau_2^k\|_0^2\right).\nonumber
\end{align}
Assuming \eqref{delta_t_fixed} we can apply Gronwall Lemma \ref{gronwall} to obtain
\begin{eqnarray}\label{er:cota_fin}
&&\|\be_h^{n+1}\|_0^2+\sum_{k=2}^{n+1}\nu \Delta t \|\nabla \be_h^k\|_0^2
+\mu\sum_{k=2}^{n+1}\Delta t \|\nabla\cdot \be_h^k\|_0^2
\le \nonumber\\
&&\quad  e^{8(\kappa+1) +2T\max_jM^j}\left(E^0+2\sum_{k=2}^{n+1}\Delta t \left(T(\|\tau_1^k\|_0^2+\|\tau_3^k\|_0^2)+\frac{9}{\mu}\|\tau_2^k\|_0^2\right)\right).
\end{eqnarray}
In view of~\eqref{eq:norm_t2} and \eqref{eq:norm_t1}-\eqref{eq:cota_t3}-\eqref{eq:extrapol_trl} 
from Lemma \ref{le:trun} and taking into account that $\omega_n=1$, $n=1,\ldots,N=T/\Delta t$, we have
$$
\Delta t \sum_{k=2}^n\left\|\tau_1^k\right\|_0^2 \le C\Bigl(\left\| \bu_t\right\|_{L^2(H^k)}^2h^{2k} + 
\left\|\bu_{ttt}\right\|_{L^2(L^2)}^2(\Delta t)^4\Bigr),
$$
$$
\Delta t \sum_{k=2}^n\left\|\tau_2^k\right\|_0^2 \le CT\left(\mu^2 \left\|\bu\right\|_{L^\infty(H^{k+1})}^2 h^{2k} + 
\left\| p\right\|_{L^\infty(H^{l+1})}^2h^{2(l+1)}\right),
$$
and
\begin{align*}
\Delta t \sum_{k=2}^n\left\|\tau_3^k\right\|_0^2\le &C\left\| \bu\right\|_{L^\infty(W^{1,\infty})}^2\left(T
\left|\Omega\right|^{2/d}\left\|\bu\right\|_{L^\infty(H^{k+1})}^2h^{2k}
+ \left\|\bu_{tt}\right\|_{L^2(L^2)}^2(\Delta t)^4
\right)
\nonumber\\
&+C\left\| \bu\right\|_{L^\infty(L^\infty)}^2
 \left\|\bu_{tt}\right\|_{L^2(H^1)}^2(\Delta t)^4.
\end{align*}
Consecuently,
\begin{equation}
\label{eq:suma_truncs}
\sum_{k=2}^{n+1}\Delta t \left(T(\|\tau_1^k\|_0^2+\|\tau_3^k\|_0^2)+\frac{9}{\mu}\|\tau_2^k\|_0^2\right)
\le T\left(C_1^2h^{2k} + C_2^2h^{2l+2} + C_3^2(\Delta t)^4\right),
\end{equation}
where
\begin{align}
C_1^2&=C\left(\left\|\bu_t\right\|_{L^2(H^k)}^2+ \left(\mu +T\left|\Omega\right|^{2/d}\left\|\bu\right\|_{L^\infty(W^{1,\infty})}^2\right) \left\|\bu\right\|_{L^\infty(H^{k+1})}^2\right),
\nonumber\\
\label{eq:C2}
C_2^2&=C\frac{1}{\mu}\left\|p\right\|_{L^\infty(H^{l+1})}^2,
\\
\nonumber
C_3^2&=C\left(\left\|\bu_{ttt}\right\|_{L^2(L^2)}^2 + \left\|\bu\right\|_{L^\infty(W^{1,\infty})}^2\left\|\bu_{tt}\right\|_{L^2(L^2)}^2 + \left\|\bu\right\|_{L^\infty(L^\infty)}^2\left\|\bu_{tt}\right\|_{L^2(H^1)}^2
\right).
\end{align}
Finally, since we have assumed $\bu_h^0=\bs_h^0$ to bound $E_0$ we only need to bound $\be_h^1$. To this end, let us observe that after one step
using Euler method with the explicit form of the nonlinear term, and arguing as before, it is easy to get
\begin{equation}\label{eq:ert1}
\frac{1}{\Delta t_0}\|\be_h^1\|_0^2+\nu\|\nabla \be_h^1\|_0^2+\mu\|\nabla \cdot \be_h^1\|_0^2=(\tau_1^1+\tau_3^1,\be_h^1)
+(\tau_2^1,\nabla \cdot \be_h^1).
\end{equation}
From \eqref{eq:ert1} we get
$$
\frac{1}{2}\|\be_h^1\|_0^2+\frac{\mu}{2}\Delta t_0\|\nabla \cdot \be_h^1\|_0^2\le \frac{(\Delta t_0)^2}{2}\|\tau_1^1+\tau_3^1\|_0^2
+\frac{\Delta t_0}{2\mu}\|\tau_2^1\|_0^2.
$$
Now, in view of~(\ref{trunc11}) and  taking into account that $\Delta t\le T$, we have
\begin{equation}
\label{eq:Dt12}
(\Delta t)^2\|\tau_1^1\|_0^2
 \le C \|\bu_{tt}\|_{L^\infty(L^2)}^2(\Delta t)^4 + TC_1^2h^{2k},
\end{equation}
where~$C_1$ is the constant in~\eqref{eq:C2}.
Also in view of~\eqref{trunc13} we have
$$
(\Delta t)^2 \left\|\tau_3^1\right\|_0^2\le C(\Delta t)^4 \bigl( \left\|\bu\right\|_{L^{\infty}(W^{1,\infty})}^2
\left\| \bu_t\right\|_{L^\infty(L^2)}^2 + \left\|\bu\right\|_{L^{\infty}(L^\infty)}^2
\left\| \bu_t\right\|_{L^\infty(H^1)}^2\bigr) + TC_1 h^{2k}.
$$
Thus, we finally obtain
\begin{equation}\label{cota_ini}
\|\be_h^1\|_0^2+\mu\Delta t\|\nabla \cdot \be_h^1\|_0^2\le 
C_0^2(\Delta t)^4 + T\left(C_1^2h^{2k} + C_2^2h^{2l+2} \right),
\end{equation}
where
\begin{equation}
\label{eq:C0}
C_{0}^2=C\bigl( \|\bu_{tt}\|_{L^\infty(L^2)}^2
+\left\|\bu\right\|_{L^{\infty}(W^{1,\infty})}^2
\left\| \bu_t\right\|_{L^\infty(L^2)}^2 + \left\|\bu\right\|_{L^{\infty}(L^\infty)}^2
\left\| \bu_t\right\|_{L^\infty(H^1)}^2\bigr).
\end{equation}
Going back to the definition of $E_0$, \eqref{E0}, we reach
$$
E_0\le \left(5+\Delta t\left[6\|\nabla \hat \bs_h^2\|_\infty
+12\|\nabla \hat \bs_h^1\|_\infty\right]\right)\|\be_h^1\|_0^2+2\mu(\Delta t)\|\nabla\cdot \be_h^1\|_0^2,
$$
so that applying \eqref{cota_ini} and~\eqref{cotainfty1} we finally obtain
\begin{equation}\label{cota_E0}
E_0 \le  C\left(1+\Delta t\left\|\bu\right\|_{L^\infty(W^{1,\infty})} \right)
\left(C_0^2(\Delta t)^4 + T\left(C_1^2h^{2k} + C_2^2h^{2l+2}\right)\right).
\end{equation}
Inserting \eqref{eq:suma_truncs}, \eqref{cota_E0} into \eqref{er:cota_fin} we conclude \eqref{er:cota_fin_th} as long as condition~\eqref{CFL} holds. Thus, in order to finish the proof, we have to check that this is the case if $h$ and~$\Delta t$ are taken sufficiently small. To this end we devote the rest of the proof. 

We will show that for $h$ sufficiently small it holds
\begin{equation}
\label{conti0}
\|\hat\bu_h^n\|_\infty \le 4\left\|\bu\right\|_{L^\infty(L^\infty)},
\end{equation}
for $0\le n\le N-1$, so that~\eqref{CFL} will be a consequence of~\eqref{CFL-u}.
For this purpose, we start by taking $h_{0,1}$ such that the right-hand side of~\eqref{cotainfty0} is bounded by~$\left\|\bu\right\|_{L^\infty(L^\infty)}/6$,
so that by writing~$\bs_h=\bs_h - \bu + \bu$, and
in view of~\eqref{cotainfty0}, we have that, if $h\le h_{0,1}$, then,
$\left\| \bs_h\right\|_{L^\infty(L^\infty)}\le (7/6)\left\|\bu\right\|_{L^\infty(L^\infty)}$, and, consequently,
\begin{equation}
\label{conti1}
\left\|\hat \bs_h^n \right\| \le (7/2) \left\|\bu\right\|_{L^\infty(L^\infty)}, \qquad 0\le t_n\le T.
\end{equation}
For~$\|\hat\bu_h^n\|_\infty$, 
adding and subtracting
$\hat\bs_h^n$ and using inverse inequality \eqref{inv}  we get
\begin{align}
\label{conti2}
\|\hat\bu_h^n\|_\infty&\le \|\hat\bu_h^n-\hat \bs_h^n\|_\infty+\|\hat\bs_h^n\|_\infty
\le c_{\rm inv}h^{-d/2}\|\hat\bu_h^n-\hat \bs_h^n\|_0+\|\hat\bs_h^n\|_\infty\nonumber\\
&\le
c_{\rm inv}h^{-d/2}2\|\bu_h^n- \bs_h^n\|_0+c_{\rm inv}h^{-d/2}\|\bu_h^{n-1}- \bs_h^{n-1}\|_0+(7/2) \left\|\bu\right\|_{L^\infty(L^\infty)},
\end{align}
where, in the last inequality, we have also applied~\eqref{conti1}.
We now take $h_{0,2}\le h_{0,1}$ such that if $h\le h_{0,2}$ the right-hand sides of~\eqref{er:cota_fin_th}
and~\eqref{cota_ini} are smaller that
\begin{equation}
\label{cantidad}
r(h)=\frac{h^{d}}{36c_{\rm inv}^2}.
\end{equation}
Notice that this is possible since, on the one hand, 
due to~\eqref{CFL-u}, $\Delta t$ can be bounded in terms of~$ h^2$, and, on the other hand, the right-hand sides of~~\eqref{er:cota_fin_th}
and~\eqref{cota_ini}, being at least $O(h^4)$, decay faster with~$h$ than~$r(h)$ in~\eqref{cantidad}, which is, at most, $O(h^3)$.

For $h\le h_{0,2}$, and since we are taking $\bu_h^0=\bs_h^0$, in view of~(\ref{conti2}), we have that
\eqref{conti0}, holds for~$n=0,1$. Assuming that \eqref{conti0} holds for $n\le m$, we will now show that it also holds
for~$n=m+1$, and this will finish the proof. Indeed, if~(\ref{conti0}) holds for $n\le m$, then, as argued above,
\eqref{er:cota_fin_th} holds for~$n=m$. But, since~$h\le h_{0,2}$, the right-hand side of~\eqref{er:cota_fin_th} is smaller that~$r(h)$ in~\eqref{cantidad}, and, consequently, for $n=m+1$, the right-hand side of~\eqref{conti2} is smaller that
$(7/2+1/2) \left\|\bu\right\|_{L^\infty(L^\infty)}$, that is, \eqref{conti0} also holds for~$n=m+1$.
\end{proof}
\begin{remark}\label{re:2}
In view of~\eqref{CFL-u}, we observe that as $\Delta t,h\rightarrow 0$ the ratio $\Delta t/h^2$ must be bounded. One can allow the upper bound to be larger or smaller, but its size reflects in the error bound~\eqref{er:cota_fin_th} through the factor
$\exp(8(\kappa+1))$ on the right-hand side of the error bound \eqref{er:cota_fin_th}.

As pointed out in the introduction, arguing as in \cite{imex_schneier}, it is possible to get a different CFL condition in which the quantity that has to be bounded is $(\Delta t) h^{-1}\nu^{-1}$ instead of $(\Delta t)h^{-2}$ and that involves a weaker norm for $\bu$ in \eqref{CFL-u}. However, since we focus on getting bounds that hold for high Reynolds numbers we will develop this line of research in future works.
\end{remark}
\bigskip

For the semi-implicit method~\eqref{semimp} we have the following result.

\begin{Theorem}\label{th1-si} Let $\Delta t$ satisfy the following bound for $k=2,\ldots,n+1$,
$$
\Delta t\Bigl(\frac{1}{T}+ N^k\Bigr)\le \frac{1}{2},\quad k=2,\ldots,n+1,
$$
where
$$
N_k
=4\biggl(\frac{\|\nabla \bs_h^{k}\|_\infty}{2}+\frac{9}{2\mu}\| \bs_h^{k}\|_\infty^2
+3\|\nabla  \bs_h^{k+1}\|_\infty+\frac{3}{2}\|\nabla \bs_h^{k+2}\|_\infty\biggr).
$$
Then, there exist a positive constant $h_0$, such that for $h<h_0$ the error $\be_h^{n+1}=\bs_h^{n+1}-\bu_h^{n+1}$ of the semi-implicit method~\eqref{semimp}, satisfies the following bound for $1\le n\le N-1=T/\Delta t -1$,
\begin{eqnarray*}
&&\|\be_h^{n+1}\|_0^2+\sum_{k=2}^{n+1}\nu \Delta t \|\nabla \be_h^k\|_0^2
+\mu\sum_{k=2}^{n+1}\Delta t \|\nabla\cdot \be_h^k\|_0^2
\le \nonumber\\
&&\quad  e^{8 +2T\max_jN^j}
\left(\left(C_0^2+TC_3^2\right)(\Delta t)^4 + T\left(C_1^2h^{2k} + C_2^2h^{2l+2}\right)\right),
\end{eqnarray*}
where
the constants $C_i^2$ are defined in
\eqref{eq:C2} and \eqref{eq:C0}.
\end{Theorem}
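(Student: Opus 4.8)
The proof follows the pattern of that of Theorem~\ref{th1}, specialised to the case $\kappa=0$, that is, with all the steps dealing with the CFL condition removed. First I take $\bv_h=\be_h^{n+1}$ in the error equation for the semi-implicit scheme, i.e.\ in the analogue of~\eqref{fully_error} in which the nonlinear difference is $b(\hat\bs_h^n,\bs_h^{n+1},\be_h^{n+1})-b(\hat\bu_h^n,\bu_h^{n+1},\be_h^{n+1})$ and $\tau_3^{n+1}$ is replaced by $\tau_4^{n+1}$. Using the BDF2 $G$-stability identity
$$
\left(\frac{1}{\Delta t}{\cal D}\be_h^{n+1},\be_h^{n+1}\right)=\frac{1}{4\Delta t}\Bigl(\|\be_h^{n+1}\|_0^2+\|\hat\be_h^{n+1}\|_0^2+\|\be_h^{n+1}-\hat\be_h^n\|_0^2-\|\be_h^n\|_0^2-\|\hat\be_h^n\|_0^2\Bigr)
$$
together with Young's inequality on the right-hand side (the pressure term absorbed into the grad-div term, the truncation terms split off, and the $\|\be_h^{n+1}\|_0^2$ term given a small weight), I reach the analogue of~\eqref{eq:er2}, now with $\tau_4^{n+1}$ in place of $\tau_3^{n+1}$. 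I then insert the bound~\eqref{eq:nonli1-0-si} of Lemma~\ref{le:nl-si}. The crucial simplification, already noted in Remark~\ref{semimp-nl}, is that~\eqref{eq:nonli1-0-si} contains no term of the form $h^{-1}\|\be_h^{n+1}-\hat\be_h^n\|_0\|\be_h^{n+1}\|_0$; hence the positive term $\tfrac14(\Delta t)^{-1}\|\be_h^{n+1}-\hat\be_h^n\|_0^2$ on the left is simply discarded, no estimate of the type~\eqref{eq:cota_ref} is invoked, and no CFL condition appears.

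Next I sum over $k=2,\dots,n+1$ and telescope exactly as in~\eqref{eq:er4}, obtaining, after multiplying through by $4$, an inequality of the shape of~\eqref{eq:er5} in which $\tau_3$ is replaced by $\tau_4$, $M^k$ is replaced by $N^k$ (the quantity of the statement, namely $M^k$ of~\eqref{Mjota} with $\hat\bs_h^{k-1},\hat\bs_h^k,\hat\bs_h^{k+1}$ replaced by $\bs_h^k,\bs_h^{k+1},\bs_h^{k+2}$), and the term $4\tfrac{\kappa+1}{T}$ is replaced by its value at $\kappa=0$ (the only source of that coefficient now being the Young weight on the truncation terms). The smallness hypothesis on $\Delta t$ in the statement ensures the applicability of the discrete Gronwall Lemma~\ref{gronwall}, whose conclusion then produces the factor $e^{8+2T\max_j N^j}$ (consistent with the IMEX factor $e^{8(\kappa+1)+2T\max_j M^j}$ at $\kappa=0$). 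The truncation sums are estimated exactly as in the proof of Theorem~\ref{th1}: $\|\tau_2^k\|_0$ by~\eqref{eq:norm_t2}, $\|\tau_1^k\|_0$ by~\eqref{eq:norm_t1}, and $\|\tau_4^k\|_0$ by~\eqref{eq:cota_t4} together with~\eqref{eq:extrapol_trl}. Since~\eqref{eq:cota_t4} has one summand fewer than~\eqref{eq:cota_t3} (it lacks the term involving $\|\hat\bu^n-\bu^{n+1}\|_1$), the sum $\sum\Delta t\|\tau_4^k\|_0^2$ is controlled by the bound already derived for $\sum\Delta t\|\tau_3^k\|_0^2$, so the same constants $C_1^2,C_2^2,C_3^2$ of~\eqref{eq:C2}--\eqref{eq:C0} may be used and one reaches an estimate of the form~\eqref{eq:suma_truncs}. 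All the $\bs_h$-norms entering $N^k$ and the initial term are bounded uniformly in $\nu$ and, for $h<h_0$, uniformly in $h$, through~\eqref{cotainfty0}--\eqref{cotainfty1}.

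Finally I bound the starting error $\be_h^1$. Since the first step of the semi-implicit scheme is again an IMEX Euler step, taking $\bv_h=\be_h^1$ gives the identity~\eqref{eq:ert1} verbatim except that $\tau_3^1$ is replaced by the functional arising from $b(\hat\bs_h^0,\bs_h^1,\bv_h)-b(\bu^1,\bu^1,\bv_h)$ with $\hat\bs_h^0=\bs_h^0$; estimating this functional as in Lemma~\ref{le:trun1} — and again its bound is no larger than~\eqref{trunc13} — yields~\eqref{cota_ini}, hence the estimate~\eqref{cota_E0} for the $E^0$-term (the semi-implicit analogue of~\eqref{E0}). Substituting the truncation sums and this bound into the Gronwall conclusion gives the asserted estimate. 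I do not expect any genuine obstacle: in contrast with the final part of the proof of Theorem~\ref{th1}, no continuation/bootstrap argument is needed here, and the role of $h<h_0$ is merely to make the logarithmic factor in~\eqref{cotainfty0} meaningful and to validate the uniform-in-$h$ bounds on $\bs_h$. The one point requiring care is the bookkeeping of the Young weights and of the various $\bs_h$-norm coefficients, so that the final constants match those of Theorem~\ref{th1}.
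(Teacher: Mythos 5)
Your proposal is correct and follows essentially the same route as the paper, which proves this theorem by observing that \eqref{eq:er2}--\eqref{eq:er3} hold for the semi-implicit scheme with $\kappa=0$, $\hat\bs_h^n$ replaced by $\bs_h^{n+1}$ and $\tau_3$ by $\tau_4$ (via Lemma~\ref{le:nl-si} and Remark~\ref{semimp-nl}), and then repeats the argument of Theorem~\ref{th1} without the CFL/boundedness-of-$\bu_h^n$ step. Your treatment of the Gronwall application, the truncation sums and the first (IMEX Euler) step matches the paper's intent, so no gap remains.
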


\begin{proof} Arguing as in the case of method~\eqref{fully_discrete}, we have that~\eqref{eq:er2} also holds for the semi-implicit method but with the first term on the right-hand side of~\eqref{eq:er2} replaced by
 $$
 |b(\hat \bs_h^n,\bs_h^{n+1},\be_h^{n+1})-b(\hat\bu_h^n,\bu_h^{n+1},\be_h^{n+1})|
 $$
 and~$\tau_3^n$ replaced by~$\tau_4^n$. Then, recalling~Lemma~\ref{le:nl-si}
 and taking into account Remark~\ref{semimp-nl}, we have that~\eqref{eq:er3} also holds true for method~\eqref{semimp} with $\kappa=0$, $\hat \bs_h^n$ replaced by~$\bs_h^{n+1}$ and $\tau_3^n$ replaced by~$\tau_4^n$. Then the proof follows the steps of the proof of Theorem~\ref{th1}. In the present case, there is no need to argue about the boundedness of~$\bu_h^n$ since it is not necessary that~condition~\eqref{CFL} is satisfied.
\end{proof}

\begin{remark}\label{re_order} From Theorems \ref{th1} and~\ref{th1-si}, using triangle inequality and applying \eqref{eq:cotanewpro}, we reach
$$
\max_{1\le n\le N}\|\bu_h^n-\bu^n\|_0=O((\Delta t)^2+h^k+h^{l+1}),
$$
which means that the rate of convergence is of optimal order 2 in time. Choosing for example Hood-Taylor elements with $l=k-1$ 
we get a rate of convergence of order $k$ in space, which matches previously results in \cite{nos_grad_div}, as stated in the introduction.
\end{remark}
\subsection{Variable stepsize}\label{se:var}
In this section we carry out the error analysis for the variable stepsize case. 

We start with a technical lemma that is needed to prove the main result.
\begin{lema}\label{le:tech}
Assuming condition \eqref{eq:cond_omega}
and denoting by
\begin{align}
\label{eq:elG_n}
G_n&=\frac{\omega_{n-1}}{2(1+\omega_{n-1})}\left\|\be_h^{n}\right\|_0^2 +  \frac{1}{4}\left\| \be_h^{n}+(\be_h^{n}-\be_h^{n-1})\right\|_0^2,\qquad n=1,2,\ldots,N,
\end{align}
the following inequality holds, for a constant $K_0\le 75/14$, 
\begin{align}
\label{eq:pvar1_d}
\Delta t_n({\cal D} \be_h^{n+1},\be_h^{n+1}) 
 &\ge G_{n+1} - G_n+\frac{\omega_n}{2(1+\omega_n)}\left\|  \be_h^{n+1}-\hat\be_h^n\right\|_0^2  \\
 &\quad - \frac{K_0}{\gamma}(1+\Gamma)^2 \left(\left|\omega_{n-1}-1\right| + \left|\omega_n-1\right|\right)G_n.\nonumber
\end{align}
\end{lema}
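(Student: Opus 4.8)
The plan is to expand the quantity ${\cal D}\be_h^{n+1}$ according to its definition, pair it with $\be_h^{n+1}$, and reconstruct the telescoping quantity $G_{n+1}-G_n$ plus the dissipative term $\frac{\omega_n}{2(1+\omega_n)}\|\be_h^{n+1}-\hat\be_h^n\|_0^2$, collecting everything that does not fit into a tidy remainder that will be controlled by the $G_n$ factor. First I would recall that
${\cal D}\be_h^{n+1}=D\be_h^{n+1}+\frac{\omega_{n-1}}{1+\omega_{n-1}}(\be_h^{n+1}-\hat\be_h^{n})$
and that $\hat\be_h^n=\be_h^n+\omega_n D\be_h^n$, so that $\be_h^{n+1}-\hat\be_h^n=D\be_h^{n+1}-\omega_n D\be_h^n$. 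The core computation is the standard BDF2 "$G$-stability'' identity: one writes $(D\be_h^{n+1},\be_h^{n+1})=\frac12\|\be_h^{n+1}\|_0^2-\frac12\|\be_h^n\|_0^2+\frac12\|D\be_h^{n+1}\|_0^2$, and similarly treats the second piece via the polarization identity $2(a-b,a)=\|a\|_0^2-\|b\|_0^2+\|a-b\|_0^2$ applied with $a=\be_h^{n+1}$, $b=\hat\be_h^n$.

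Second, I would identify the "frozen-ratio'' part of the computation: if $\omega_n=\omega_{n-1}=1$ for all indices then $G_n=\frac14\|\be_h^n\|_0^2+\frac14\|2\be_h^n-\be_h^{n-1}\|_0^2$ — the classical Dahlquist/Nevanlinna–Odeh functional for BDF2 — and the identity $\Delta t_n({\cal D}\be_h^{n+1},\be_h^{n+1})=G_{n+1}-G_n+\frac14\|\be_h^{n+1}-\hat\be_h^n\|_0^2$ holds with no remainder. For general $\omega_n$ satisfying \eqref{eq:cond_omega}, the coefficients in the natural quadratic form differ from those in $G_{n+1},G_n$ by amounts proportional to $|\omega_n-1|$ and $|\omega_{n-1}-1|$; the difference $G_{n+1}^{\text{natural}}-G_{n+1}$ and the analogous correction at level $n$ are each a quadratic form in $(\be_h^n,\be_h^{n-1})$ (resp. $(\be_h^{n+1},\be_h^n)$) with coefficients of that size. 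I would bound each such quadratic form by a constant times $(|\omega_n-1|+|\omega_{n-1}-1|)$ times $\|\be_h^n\|_0^2+\|\be_h^{n-1}\|_0^2$, and then re-express $\|\be_h^n\|_0^2+\|\be_h^{n-1}\|_0^2$ in terms of $G_n$ using the lower bound $G_n\ge c(\gamma,\Gamma)(\|\be_h^n\|_0^2+\|\be_h^{n-1}\|_0^2)$, which follows because $\frac{\omega_{n-1}}{2(1+\omega_{n-1})}\ge\frac{\gamma}{2(1+\gamma)}$ and $\|2\be_h^n-\be_h^{n-1}\|_0^2$ together with $\|\be_h^n\|_0^2$ control $\|\be_h^{n-1}\|_0^2$. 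Tracking the constants — here the upper bound $\Gamma$ enters through $(1+\Gamma)^2$ and the lower bound $\gamma$ through the $1/\gamma$, and the numeric factor works out to $K_0\le 75/14$ — is the bookkeeping that yields the stated form.

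The main obstacle I anticipate is not any single estimate but the careful algebraic accounting: one must choose the representation of the quadratic correction terms so that (i) all "good'' terms reassemble exactly into $G_{n+1}-G_n$ and the clean dissipation term $\frac{\omega_n}{2(1+\omega_n)}\|\be_h^{n+1}-\hat\be_h^n\|_0^2$ (note the $\omega_n$-dependent coefficient, which comes from the weight $\frac{\omega_{n-1}}{1+\omega_{n-1}}$ in ${\cal D}$ shifted one index and from the $\Delta t_n$ multiplier), and (ii) the leftover is genuinely $O(|\omega_{n-1}-1|+|\omega_n-1|)$ and of one sign after being dominated, so it can be moved to the right-hand side as $-\frac{K_0}{\gamma}(1+\Gamma)^2(|\omega_{n-1}-1|+|\omega_n-1|)G_n$. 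A secondary subtlety is the boundary index $n=1$: there ${\cal D}\be_h^1=D\be_h^1$ and $\hat\be_h^0=\be_h^0$ by the convention \eqref{eq:notation0}, and one checks the inequality degenerates correctly with $G_1$ as defined (with $\omega_0$ satisfying \eqref{eq:cond_omega}). Once the exact identity for the constant-ratio case is written down, the variable-ratio version is a perturbation argument, and the numerical value $75/14$ drops out of optimizing the splitting of the cross terms via Young's inequality.
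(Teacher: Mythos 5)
Your plan follows essentially the same route as the paper's proof: the two applications of the polarization identity that produce $G_{n+1}-G_n$ plus the dissipation term $\frac{\omega_n}{2(1+\omega_n)}\|\be_h^{n+1}-\hat\be_h^n\|_0^2$, a remainder that is a quadratic form in $(\be_h^n,\be_h^{n-1})$ with coefficients of size $O(|\omega_{n-1}-1|+|\omega_n-1|)$, and absorption of that remainder via a lower bound $G_n\ge c(\gamma,\Gamma)\bigl(\|\be_h^n\|_0^2+\|\be_h^{n-1}\|_0^2\bigr)$ (which the paper obtains from an explicit eigenvalue estimate of a $2\times 2$ matrix, yielding $K_0\le 75/14$). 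The only detail left implicit in your sketch, and which the computation confirms, is that no correction actually arises at level $n+1$ — the $G_{n+1}$ piece matches exactly — so the entire perturbation sits at level $n$ and can indeed be dominated by $G_n$ alone, as the statement requires.
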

\begin{proof}
We first rewrite adequately the term~$({\cal D}\be_h^{n+1},\be_h^{n+1})$.
Using the identity
\begin{equation}
\label{eq:the_identity}
(\bv-\bw,\bv)=\frac{1}{2}\bigl(\left\| \bv\right\|_0^2 -\left\| \bw\right\|_0^2+\left\| \bv-\bw\right\|_0^2),
\end{equation}
we have that
\begin{align}
\label{eq:pvar0}
\Delta t_n({\cal D} \be_h^{n+1},\be_h^{n+1}) =& \frac{1}{2}
\bigl(\left\| \be_h^{n+1}\right\|_0^2 -\left\| \be_h^{n}\right\|_0^2+\left\| \be_h^{n+1}-\be_h^{n}\right\|_0^2\bigr)\\
\nonumber
&{}
+ \frac{\omega_n}{2(1+\omega_n)}
\bigl(\left\| \be_h^{n+1}\right\|_0^2 -\left\| \hat \be_h^{n}\right\|_0^2+\left\| \be_h^{n+1}-\hat\be_h^{n}\right\|_0^2\bigr).
\end{align}
For the first term on the right-hand side above, using the identity~\eqref{eq:the_identity}, we write
\begin{align*}
\frac{1}{2}
\bigl(\left\| \be_h^{n+1}\right\|_0^2 -\left\| \be_h^{n}\right\|_0^2+\left\| \be_h^{n+1}-\be_h^{n}\right\|_0^2\bigr)=&
\frac{1}{4}
\bigl(\left\| \be_h^{n+1}\right\|_0^2 -\left\| \be_h^{n}\right\|_0^2+\left\| \be_h^{n+1}-\be_h^{n}\right\|_0^2\bigr)
\\
&{}+\frac{1}{2}(\be_h^{n+1},\be_h^{n+1}-\be_h^{n})
\\
=&\frac{1}{4}\left\| \be_h^{n+1}+(\be_h^{n+1}-\be_h^{n})\right\|_0^2 - \frac{1}{4}\left\| \be_h^{n}\right\|_0^2.
\end{align*}
Then
\begin{align}
\label{eq:pvar1}
\Delta t_n({\cal D} \be_h^{n+1},\be_h^{n+1}) &=G_{n+1} - \Bigl(\frac{1}{4}\left\| \be_h^{n}\right\|_0^2 + 
 \frac{\omega_n}{2(1+\omega_n)}\left\| \hat\be_h^{n}\right\|_0^2\Bigr) +
\frac{\omega_n}{2(1+\omega_n)}\left\| \be_h^{n+1}-\hat\be_h^n\right\|_0^2 \nonumber\\
 &= G_{n+1} - G_n +{\cal R}_n +  \frac{\omega_n}{2(1+\omega_n)}\left\|  \be_h^{n+1}-\hat\be_h^n\right\|_0^2,
\end{align}
where
$$
{\cal R}_n  = G_{n}-\Bigl(\frac{1}{4}\left\| \be_h^{n}\right\|_0^2 + 
 \frac{\omega_n}{2(1+\omega_n)}\left\| \hat\be_h^n\right\|_0^2\Bigr).
$$
Noticing that
$$
\left\| \be_h^{n}+\omega_n(\be_h^{n}-\be_h^{n-1})\right\|_0^2= (1+\omega_n)^2\left\| \be_h^{n}\right\|_0^2
-2\omega_n(1+\omega_n) (\be_h^{n},\be_h^{n-1}) + \omega_{n}^2 \left\| \be_h^{n-1}\right\|_0^2,
$$
a straightforward computation shows
$$
 {\cal R}_n = c_{1,n} \left\| \be_h^{n}\right\|_0^2 - c_{2,n}(\be_h^{n},\be_h^{n-1}) +
c_{3,n}\left\| \be_h^{n-1}\right\|_0^2,
$$
where
\begin{align}
c_{1,n} & =\frac{\omega_{n-1}}{2(1+\omega_{n-1})}-\frac{1}{4}+ 1-\frac{\omega_n(1+\omega_n)}{2},
\label{eq:c_1n}\\
c_{2,n} &= 1 -\omega_n^2,
\label{eq:c_2n}\\
c_{3,n} & =\frac{1}{4}- \frac{\omega_n^3}{2(1+\omega_n)}.
\end{align}
We now express these three coefficients in terms of $\omega_n-1$ and~$\omega_{n-1}-1$. For $c_{1,n}$, we have
$$
c_{1,n}=\frac{\omega_{n-1}-1}{4(1+\omega_{n-1})}+\frac{2-\omega_n-\omega_n^2}{2}
=
\frac{\omega_{n-1}-1}{4(1+\omega_{n-1})} + \frac{2+\omega_n}{2}(1-\omega_n).
$$
For $c_{3,n}$, we have
$$
c_{3,n}=\frac{1+\omega_n-2\omega_n^3}{4(1+\omega_n)}
=\frac{1+2\omega_n+2\omega_n^2}{4(1+\omega_n)}(1-\omega_n).
$$
From \eqref{eq:cond_omega} it is easy to check that
\begin{align}
\left|c_{1,n}\right|&\le \frac{1}{4}\left|\omega_{n-1}-1\right| + (1+\Gamma/2)\left|\omega_n-1\right|,
\nonumber\\
\label{eq:cota_c2n}
\left|c_{2,n}\right|&\le (1+\Gamma)\left|\omega_n-1\right|,
\\
\left|c_{3,n}\right|&\le \Bigl(\frac{1}{4}+ \frac{1+\Gamma}{2}\Bigr)\left|\omega_n-1\right|,
\end{align}
so that 
\begin{equation}
\label{eq:cota_cn}
\max_{1\le j\le 3} \left|c_{j,n}\right|\le \frac{1}{4}\left|\omega_{n-1}-1\right| + (1+\Gamma)\left|\omega_n-1\right|.
\end{equation}
Furthermore, since
$$
G_n=\Bigl( 1+\frac{\omega_{n-1}}{2(1+\omega_{n-1})}\Bigr) \left\| \be_h^{n}\right\|_0^2 - (\be_h^{n},\be_h^{n-1}) +\frac{1}{4}
\left\| \be_h^{n-1}\right\|_0^2,
$$
and the smallest eigenvalue of matrix
\begin{equation}
\left[\begin{array}{cc} 1+x & -1/2\\ -1/2 & 1/4\end{array}\right]
\label{eq:una_matriz}
\end{equation}
is
$\lambda=\left((5+4x)-\sqrt{(5+4x)^2-16x}\right)/8$, which can be seen to be $\lambda\ge 0.14x$,
for $0\le x\le 1/2$ (see Fig.~\ref{fig:comparo}). Then,
if follows that
\begin{equation}
G_{n} \ge \frac{0.07}{(1+\Gamma)}{\omega_{n-1}} \bigl(\left\|\be_h^{n-1}\right\|_0^2 + \left\|\be_h^{n}\right\|_0^2\bigr).
\label{eq:cota_inf_Gn}
\end{equation}
Thus, if~\eqref{eq:cond_omega} holds, from~\eqref{eq:cota_cn} and~\eqref{eq:cota_inf_Gn} it follows that
\begin{eqnarray}
\label{eq:cota_calEn}
\left| {\cal R}_n\right| &\le& K_0\frac{1+\Gamma}{4\gamma} \left(\left|\omega_{n-1}-1\right| + 4(1+\Gamma)\left|\omega_n-1\right|
\right)G_n \nonumber\\
&\le& \frac{K_0}{\gamma}(1+\Gamma)^2 \left(\left|\omega_{n-1}-1\right| + \left|\omega_n-1\right|\right)G_n,
\end{eqnarray}
with $K_0\le (3/8)100/7=75/14$.
\begin{figure}
\begin{center}
\includegraphics[height=2.7truecm]{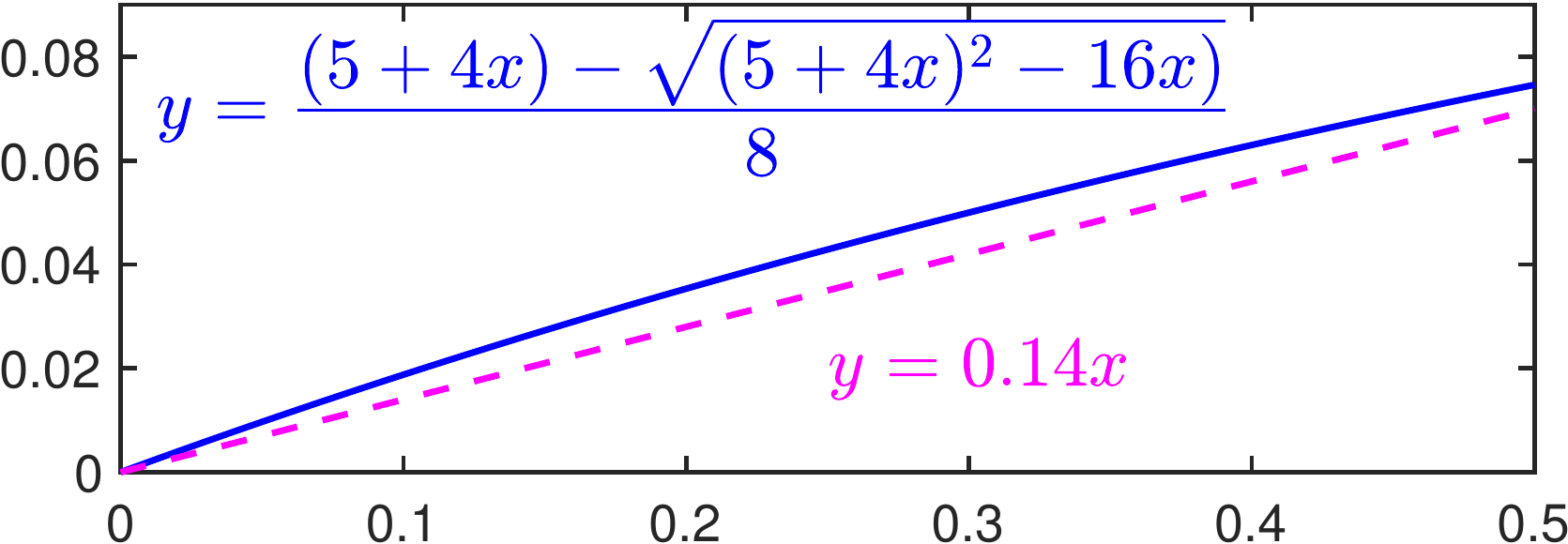}
\end{center}
\caption{Graphs of smallest eigenvalue of matrix~\eqref{eq:una_matriz} and $y=0.14x$.}
\label{fig:comparo}
\end{figure}
Inserting \eqref{eq:cota_calEn} into \eqref{eq:pvar1} we finally obtain \eqref{eq:pvar1_d}.
\end{proof}
In the sequel, we define
$$
\Delta t=\max_{1\le n\le N}\Delta t_n.
$$
Also, for $\alpha\in[1,10]$, we consider the polynomial
\begin{equation}\label{poly}
\frac{x(1+2x)}{\alpha}x^2+\frac{(1+2x)(1+x)}{\alpha}x-17.
\end{equation}
One can check that it has two complex roots and two real ones, one negative and one positive.  Let $\Gamma^*(\alpha)$ be this positive real root. It is possible to check that $\Gamma^*(
\alpha)$ is an increasing function of~$\alpha$ whose vales for $\alpha=1,5,10$ are larger than, $1.25$, $2.12$ and~$2.61$, respectively. Fixed $\alpha\in[1,10]$, we restrict ourselves to meshes satisfying
\begin{equation}
\label{cota_Gamma}
\Gamma\le\Gamma^*(\alpha).
\end{equation}
We also assume
\begin{equation}
\label{ratios_Lambda}
\sum_{n=2}^{N-1}\left(\left|\omega_{n-1}-1\right|
 + \left|\omega_n-1\right|\right)\le \Lambda,
\end{equation}
for some positive $\Lambda$.

We now state and prove the convergence result for the IMEX method~\eqref{fully_discrete}.
\begin{Theorem}\label{th:th2}
  Fix $\kappa>0$, and $\sigma>1$,
and let $\Delta t_n$ and~$h$ satisfy, the following CFL-type condition
\begin{eqnarray}\label{CFL2-u}
3c_{\rm inv}^2\|\bu\|_{L^\infty(L^\infty)}^2(1+2\Gamma)^2\frac{(1+\Gamma)}{\gamma}\frac{\Delta t_n}{h^2}\le \frac{\kappa}{T}, \quad n\ge 1,
\end{eqnarray}
and
\begin{eqnarray}\label{mas_hip}
1-\Delta t_n\frac{(1+\Gamma)L_h^n}{0.07\omega_n}&>&\frac{1}{\sigma}, \quad n\ge 1.
\end{eqnarray}
where $L_h^n$ is defined in \eqref{eq:Ln}. Let $G_n$ be as defined in~\eqref{eq:elG_n} for the error~$\be_h^n$ of method~\eqref{fully_discrete}.
Then, the following bound holds with $m_k$ defined in \eqref{mas_hip2}
\begin{eqnarray}\label{cota_def}
&&G_{n+1}+\nu\sum_{k=2}^{n+1}\Delta t_{k-1}\|\nabla e_h^k\|_0^2+\mu\sum_{k=2}^{n+1}\Delta t_k m_k\|\nabla \cdot \be_h^k\|_0^2\le\\
&&
C_n
\biggl((C_0^2+TC_3^2)(\Delta t)^4 + T\left(C_1^2h^{2k} + C_2^2h^{2l+2}\right)
\biggr),\nonumber
\end{eqnarray}
where
\begin{equation}\label{laCn}
C_n=\exp\left(\sigma_{n+1}\Delta t_{n}\frac{(1+\Gamma)L_h^{n}}{0.07\omega_{n}}+\sum_{k=2}^{n}
\left(\sigma_k\Delta t_{k-1}\frac{(1+\Gamma)L_h^{k-1}}{0.07\omega_{k-1}}+\omega_k f_k\right)\right),
\end{equation}
and the constants
$f_k$ and~$\sigma_k$ are defined in \eqref{laf}, \eqref{la_sigma}, respectively,
and the constants $C_i^2$ are defined in \eqref{eq:C2} and~\eqref{eq:C0}. 
\end{Theorem}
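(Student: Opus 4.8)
The plan is to mimic the proof of Theorem~\ref{th1}, replacing the elementary handling of the BDF2 time--difference term by Lemma~\ref{le:tech} and the fixed--step nonlinear estimate by the variable--step bound~\eqref{eq:nonli1-0_var}. First I would take $\bv_h=\be_h^{n+1}$ in the error equation~\eqref{fully_error}. For the term $\frac{1}{\Delta t_n}({\cal D}\be_h^{n+1},\be_h^{n+1})$ I would invoke Lemma~\ref{le:tech}, which produces $\frac{1}{\Delta t_n}(G_{n+1}-G_n)$, the nonnegative term $\frac{1}{\Delta t_n}\frac{\omega_n}{2(1+\omega_n)}\|\be_h^{n+1}-\hat\be_h^n\|_0^2$, and the variable--step remainder $-\frac{1}{\Delta t_n}\frac{K_0}{\gamma}(1+\Gamma)^2(|\omega_{n-1}-1|+|\omega_n-1|)G_n$. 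For the difference of nonlinear terms I would use~\eqref{eq:nonli1-0_var}; its last term, $2\|\hat\bu_h^n\|_\infty c_{\rm inv}h^{-1}\|\be_h^{n+1}-\hat\be_h^n\|_0\|\be_h^{n+1}\|_0$, I would split by Young's inequality so that the part carrying $\|\be_h^{n+1}-\hat\be_h^n\|_0^2$ is absorbed into the nonnegative term just mentioned, leaving a contribution of the form $C\|\hat\bu_h^n\|_\infty^2 c_{\rm inv}^2\frac{\Delta t_n}{h^2}\|\be_h^{n+1}\|_0^2$; this is where the CFL condition~\eqref{CFL2-u} enters, and, exactly as in Theorem~\ref{th1}, it will be applied with $\|\hat\bu_h^n\|_\infty$ in place of $\|\bu\|_{L^\infty(L^\infty)}$, the replacement being justified a posteriori at the end. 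The truncation terms $(\tau_1^{n+1}+\tau_3^{n+1},\be_h^{n+1})$ and $(\tau_2^{n+1},\nabla\cdot\be_h^{n+1})$ I would treat as in~\eqref{eq:er2}, retaining a fraction of $\mu\|\nabla\cdot\be_h^{n+1}\|_0^2$ and a $\frac1T\|\be_h^{n+1}\|_0^2$ contribution on the left and leaving $\frac T2\|\tau_1^{n+1}\|_0^2+\frac T2\|\tau_3^{n+1}\|_0^2+\frac{9}{2\mu}\|\tau_2^{n+1}\|_0^2$ on the right.

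Next I would multiply by $\Delta t_n$ and use~\eqref{eq:cota_inf_Gn} in the form $\|\be_h^{k}\|_0^2\le\frac{1+\Gamma}{0.07\,\omega_{k-1}}G_k$ (and likewise for $\be_h^{n+1}$) to turn every $L^2$ error norm into a multiple of a $G$, then sum over $n$ from $1$ to the current index. The $G$'s telescope to $G_{n+1}-G_1$; the grad--div terms collect into $\mu\sum_{k=2}^{n+1}\Delta t_k m_k\|\nabla\cdot\be_h^k\|_0^2$ with $m_k$ the net coefficient of~\eqref{mas_hip2}, and the viscous terms into $\nu\sum_{k=2}^{n+1}\Delta t_{k-1}\|\nabla\be_h^k\|_0^2$. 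On the right I would be left with two families of multiples of $G_k$: those of size $\Delta t_{k-1}\frac{(1+\Gamma)L_h^{k-1}}{0.07\,\omega_{k-1}}$, gathering the CFL residual together with the nonlinear and zeroth--order truncation coefficients through $L_h^k$ of~\eqref{eq:Ln} — these play the role of the $\gamma_j$ of Lemma~\ref{gronwall2} — and those of size $\omega_k f_k$, with $f_k$ proportional to $\frac{K_0}{\gamma}(1+\Gamma)^2(|\omega_{k-1}-1|+|\omega_k-1|)$ coming from the remainder of Lemma~\ref{le:tech} — these play the role of the $\delta_j$. The accumulated truncation error I would bound, via~\eqref{eq:norm_t2} and~\eqref{eq:norm_t1}--\eqref{eq:cota_t3}--\eqref{eq:extrapol_trl} from Lemma~\ref{le:trun} together with $\gamma\le\omega_n\le\Gamma$, by $T(C_1^2h^{2k}+C_2^2h^{2l+2}+C_3^2(\Delta t)^4)$ with $C_1,C_2,C_3$ as in~\eqref{eq:C2}--\eqref{eq:C0}.

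For the starting value I would argue as at the end of the proof of Theorem~\ref{th1}: since $\bu_h^0=\bs_h^0$, one Euler step with explicit nonlinear term gives $\frac1{\Delta t_0}\|\be_h^1\|_0^2+\nu\|\nabla\be_h^1\|_0^2+\mu\|\nabla\cdot\be_h^1\|_0^2=(\tau_1^1+\tau_3^1,\be_h^1)+(\tau_2^1,\nabla\cdot\be_h^1)$, so Lemma~\ref{le:trun1} and Young's inequality yield $\|\be_h^1\|_0^2+\mu\Delta t\|\nabla\cdot\be_h^1\|_0^2\le C_0^2(\Delta t)^4+T(C_1^2h^{2k}+C_2^2h^{2l+2})$, whence the same bound for $G_1$. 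I would then apply Lemma~\ref{gronwall2}: its smallness hypothesis $\gamma_j<1$ is precisely~\eqref{mas_hip}, its $\delta_j$ are summable thanks to~\eqref{ratios_Lambda}, and every $\|\nabla\hat\bs_h^k\|_\infty$ and $\|\hat\bs_h^k\|_\infty$ entering $L_h^k$ and $f_k$ is controlled through~\eqref{cotainfty0}--\eqref{cotainfty1} by $\|\bu\|_{L^\infty(W^{1,\infty})}$ and $\|\bu\|_{L^\infty(L^\infty)}$; this produces~\eqref{cota_def} with $C_n$ as in~\eqref{laCn}, $\sigma_k=(1-\gamma_k)^{-1}$, and $f_k,\sigma_k$ the quantities of~\eqref{laf} and~\eqref{la_sigma}. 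The restriction~\eqref{cota_Gamma} on $\Gamma$ is exactly what keeps $m_k$ nonnegative and the constants in $L_h^k$ under control.

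Finally I would close the bootstrap justifying the use of~\eqref{CFL2-u} with $\|\bu\|_{L^\infty(L^\infty)}$ in place of $\|\hat\bu_h^n\|_\infty$, as in Theorem~\ref{th1}: choose $h$ small so that~\eqref{cotainfty0} gives $\|\hat\bs_h^n\|_\infty\le(7/2)\|\bu\|_{L^\infty(L^\infty)}$ for all $t_n\le T$; then by~\eqref{inv}, $\|\hat\bu_h^n\|_\infty\le c_{\rm inv}h^{-d/2}\bigl(2\|\be_h^n\|_0+\|\be_h^{n-1}\|_0\bigr)+(7/2)\|\bu\|_{L^\infty(L^\infty)}$; shrinking $h$ further so that the right--hand sides of~\eqref{cota_def} and of the initial bound are below $h^d/(36 c_{\rm inv}^2)$ — possible since~\eqref{CFL2-u} controls $\Delta t$ by $h^2$ and those bounds are $O(h^4)$ — an induction on $n$ gives $\|\hat\bu_h^n\|_\infty\le4\|\bu\|_{L^\infty(L^\infty)}$, turning~\eqref{CFL2-u} into the inequality actually used. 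The hard part, and where the variable--step difficulty really concentrates, is the bookkeeping in the middle step: checking that the coefficients $c_{1,n},c_{2,n},c_{3,n}$ of Lemma~\ref{le:tech}, the factors $\omega_n/(2(1+\omega_n))$, and the CFL residual combine so that $m_k\ge0$, so that the smallness condition of Lemma~\ref{gronwall2} reduces exactly to~\eqref{mas_hip}, and so that the exponent of $C_n$ stays bounded uniformly in $N$ — which is precisely what~\eqref{ratios_Lambda} and~\eqref{cota_Gamma} secure.
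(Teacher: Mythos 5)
Your proposal follows essentially the same route as the paper's proof: testing \eqref{fully_error} with $\be_h^{n+1}$, invoking Lemma~\ref{le:tech} for the BDF2 term, absorbing the CFL piece of \eqref{eq:nonli1-0_var} into the $\frac{\omega_n}{2(1+\omega_n)}\|\be_h^{n+1}-\hat\be_h^n\|_0^2$ term, converting $\|\be_h^k\|_0^2$ into $G_k$ via \eqref{eq:cota_inf_Gn}, summing so that \eqref{cota_Gamma} keeps $m_k\ge0$, applying Lemma~\ref{gronwall2} with \eqref{mas_hip} and \eqref{ratios_Lambda}, bounding the first Euler step by Lemma~\ref{le:trun1}, and closing with the same inductive bootstrap for the CFL condition. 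The only discrepancy is minor bookkeeping in that bootstrap: you reuse the fixed-step factors ($2$, $1$, $7/2$, $4$, $h^d/(36c_{\rm inv}^2)$), whereas with variable steps $\hat\bu_h^n=(1+\omega_n)\bu_h^n-\omega_n\bu_h^{n-1}$ forces the $\Gamma$-dependent factors $(1+\Gamma)$, $\Gamma$ and $(1+2\Gamma)$ used in the paper, which is precisely why the constant in \eqref{CFL2-u} carries $(1+2\Gamma)^2(1+\Gamma)/\gamma$.
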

\begin{proof}
Taking $\bv_h=\be_h^{n+1}$ in \eqref{fully_error} and applying \eqref{eq:pvar1_d} from Lemma\, \ref{le:tech} we reach
\begin{eqnarray}\label{er_var_1}
G_{n+1} - G_n+\frac{\omega_n}{2(1+\omega_n)}\left\|  \be_h^{n+1}-\hat\be_h^n\right\|_0^2+\nu\Delta t_n\|\nabla \be_h^{n+1}\|_0^2
+\mu\Delta t_n\|\nabla \cdot\be_h^{n+1}\|_0^2\le\qquad\\
\Delta t_n|b(\hat \bs_h^n,\hat\bs_h^n,\be_h^{n+1})-b(\hat\bu_h^n,\hat \bu_h^n,\be_h^{n+1})|
+\Delta t_n(\tau_1^{n+1}+\tau_3^{n+1},\be_h^{n+1})+\Delta t_n(\tau_2^{n+1},\nabla \cdot\be_h^{n+1})\nonumber\\
+\frac{K_0}{\gamma}(1+\Gamma)^2 \left(\left|\omega_{n-1}-1\right| + \left|\omega_n-1\right|\right)G_n.\nonumber
\end{eqnarray}
As before, we now apply Lemma~\ref{le:nl} to estimate the nonlinear term and, further, we bound
\begin{align}
\label{eq:cota_ref_2}
2\|\hat \bu_h^n\|_\infty c_{\rm inv} h^{-1}\|\be_h^{n+1}-\hat e_h^n\|_0\|\be_h^{n+1}\|_0
\le& 2c_{\rm inv}^2\|\hat \bu_h^n\|_\infty^2\frac{(1+\omega_n)}{\omega_n}\frac{\Delta t_n}{h^2}\|\be_h^{n+1}\|_0^2
\\ &{}+
\frac{\omega_n}{2(1+\omega_n)}\frac{\|\be_h^{n+1}-\hat e_h^n\|_0^2}{\Delta t_n}.
\nonumber
\end{align}
Instead of \eqref{CFL2-u} we will now assume
\begin{eqnarray}\label{CFL2}
2c_{\rm inv}^2\|\hat \bu_h^n\|_\infty^2\frac{(1+\omega_n)}{\omega_n}\frac{\Delta t_n}{h^2}\le \frac{\kappa}{T}.
\end{eqnarray}
As before, we will see at the end of the proof that~\eqref{CFL2} holds if~\eqref{CFL2-u} holds and $h$ is sufficiently small.
Thus, from \eqref{eq:nonli1-0_var} in Lemma~\ref{le:nl}, \eqref{eq:cota_ref_2} and assuming \eqref{CFL2} we obtain
\begin{eqnarray}\label{eq:nonli_var}
&&\Delta t_n|b(\hat \bs_h^n,\hat \bs_h^n,\be_h)-b(\hat\bu_h^n,\hat\bu_h^n,\be_h)|\le
\Delta t_n\left(\frac{\|\nabla \hat \bs_h^n\|_\infty}{2}+\frac{9\alpha}{2\mu}\|\hat \bs_h^n\|_\infty^2+\frac{\kappa}{T}\right)\|\be_h^{n+1}\|_0^2\nonumber\\
&&\quad +\Delta t_n\frac{(1+2\Gamma)(1+\Gamma)}{2}\|\nabla \hat \bs_h^n\|_\infty\|\be_h^n\|_0^2+\Delta t_n\frac{\Gamma(1+2\Gamma)}{2}\|\nabla \hat \bs_h^n\|_\infty\|\be_h^{n-1}\|_0^2
\nonumber\\&&\quad+\Delta t_n\frac{\mu(1+2\Gamma)(1+\Gamma)}{18\alpha}\|\nabla \cdot\be_h^n\|_0^2+\Delta t_n\frac{\mu\Gamma(1+2\Gamma)}{18\alpha}\|\nabla \cdot\be_h^{n-1}\|_0^2
\\
&&\quad
+\frac{\omega_n}{2(1+\omega_n)}{\|\be_h^{n+1}-\hat \be_h^n\|_0^2}.\nonumber
\end{eqnarray}
Inserting \eqref{eq:nonli_var} into \eqref{er_var_1} and, arguing as before with the terms involving $\tau_1^{n+1}+\tau_3^{n+1}$ and~$\tau_2^{n+1}$, we get
\begin{eqnarray}\label{nonli_var2}
&&G_{n+1} - G_n+\nu\Delta t_n\|\nabla \be_h^{n+1}\|_0^2
+\frac{17}{18}\mu\Delta t_n\|\nabla \cdot\be_h^{n+1}\|_0^2\le\nonumber\\
&& \quad\Delta t_n\left(\frac{\|\nabla \hat \bs_h^n\|_\infty}{2}+\frac{9\alpha}{2\mu}\|\hat \bs_h^n\|_\infty^2
+\frac{\kappa+1}{T}\right)\|\be_h^{n+1}\|_0^2
+\frac{(1+2\Gamma)(1+\Gamma)}{2}\Delta t_n\|\nabla \hat \bs_h^n\|_\infty\|\be_h^n\|_0^2
\nonumber\\
&&\quad+\frac{\Gamma(1+2\Gamma)}{2}\Delta t_n\|\nabla \hat \bs_h^n\|_\infty\|\be_h^{n-1}\|_0^2+\Delta t_n\frac{\mu(1+2\Gamma)(1+\Gamma)}{18\alpha}\|\nabla \cdot\be_h^n\|_0^2\nonumber\\
&&\quad+\Delta t_n\frac{\mu\Gamma(1+2\Gamma)}{18\alpha}\|\nabla \cdot\be_h^{n-1}\|_0^2+\Delta t_n\frac{T}{2}\|\tau_1^{n+1}\|_0^2\nonumber\\
&&\quad+\Delta t_n\frac{T}{2}\|\tau_3^{n+1}\|_0^2
+\frac{9\Delta t_n}{2\mu}\|\tau_2^{n+1}\|_0^2+\frac{K_0}{\gamma}(1+\Gamma)^2 \left(\left|\omega_{n-1}-1\right| + \left|\omega_n-1\right|\right)G_n.
\end{eqnarray}
Let us denote by
\begin{equation}
\label{eq:Ln}
L_h^n=\frac{\|\nabla \hat \bs_h^n\|_\infty}{2}+\frac{9\alpha}{2\mu}\|\hat \bs_h^n\|_\infty^2
+\frac{\kappa+1}{T},
\end{equation}
so that, using \eqref{eq:cota_inf_Gn}, the first term on the right-hand side of \eqref{nonli_var2} can be written as
$$
\Delta t_n L_h^n\|\be_h^{n+1}\|_0^2\le \Delta t_n\frac{(1+\Gamma)L_h^n}{0.07\omega_n}G_{n+1}.
$$
Similarly, for the terms involving $\|\be_h^n\|_0^2$ and~$\|\be_h^{n-1}\|_0^2$ on the right-hand side
in~\eqref{nonli_var2}, we have
\begin{eqnarray*}
\frac{(1+2\Gamma)(1+\Gamma)}{2}\Delta t_n\|\nabla \hat \bs_h^n\|_\infty\|\be_h^n\|_0^2
+\frac{\Gamma(1+2\Gamma)}{2}\Delta t_n\|\nabla \hat \bs_h^n\|_\infty\|\be_h^{n-1}\|_0^2\le
\nonumber\\
 \frac{(1+\Gamma)(1+2\Gamma)^2}{0.14\omega_{n-1}}\Delta t_n\|\nabla \hat \bs_h^n\|_\infty G_n.
\end{eqnarray*}
Thus, going back to \eqref{nonli_var2} we reach
\begin{eqnarray}\label{nonli_var3}
&&G_{n+1} - G_n+\nu\Delta t_n\|\nabla \be_h^{n+1}\|_0^2
+\frac{17}{18}\mu\Delta t_n\|\nabla \cdot\be_h^{n+1}\|_0^2\le\Delta t_n\frac{(1+\Gamma)L_h^n}{0.07\omega_n}G_{n+1}\nonumber\\
&&\quad+\left(\frac{(1+\Gamma)(1+2\Gamma)^2}{0.14\omega_{n-1}}\Delta t_n\|\nabla \hat \bs_h^n\|_\infty+\frac{K_0}{\gamma}(1+\Gamma)^2 \left(\left|\omega_{n-1}-1\right| + \left|\omega_n-1\right|\right)\right) G_n\nonumber\\
&&\quad+\Delta t_n\frac{\mu(1+2\Gamma)(1+\Gamma)}{18\alpha}\|\nabla \cdot\be_h^n\|_0^2+\Delta t_n\frac{\mu\Gamma(1+2\Gamma)}{18\alpha}\|\nabla \cdot\be_h^{n-1}\|_0^2 \\
&&\quad+\frac{\Delta t_n}{2}\left(T(\|\tau_1^{n+1}\|_0^2+\|\tau_3^{n+1}\|_0^2)
+\frac{9}{\mu}\|\tau_2^{n+1}\|_0^2\right).\nonumber
\end{eqnarray}
Denoting by
\begin{equation}\label{laf}
f_n=\frac{(1+\Gamma)(1+2\Gamma)^2}{0.14\omega_{n-1}}\Delta t_n\|\nabla \hat \bs_h^n\|_\infty+\frac{K_0}{\gamma}(1+\Gamma)^2 \left(\left|\omega_{n-1}-1\right|
 + \left|\omega_n-1\right|\right),
\end{equation}
and adding terms we get
\begin{eqnarray}\label{nonli_var4}
&&G_{n+1}+\nu\sum_{k=2}^{n+1}\Delta t_{k-1}\|\nabla \be_h^k\|_0^2\nonumber\\
&&\quad +\mu\sum_{k=2}^{n+1}\Delta t_k\left(\frac{17}{18}\frac{1}{\omega_k}
-\frac{(1+2\Gamma)(1+\Gamma)}{18\alpha}-\frac{\Gamma(1+2\Gamma)\omega_{k+1}}{18\alpha}\right)\|\nabla \cdot \be_h^k\|_0^2\le\nonumber\\
&&G_1+\mu\left(\frac{(1+2\Gamma)(1+\Gamma)\Delta t_1}{18\alpha}+\frac{\Gamma(1+2\Gamma)\Delta t_2}{18\alpha}\right)\|\nabla \cdot \be_h^1\|_0^2+\mu\frac{\Gamma(1+2\Gamma)\Delta t_1}{18\alpha}\|\nabla \cdot \be_h^0\|_0^2\nonumber\\
&&\quad+\sum_{k=2}^{n+1}\Delta t_{k-1}\frac{(1+\Gamma)L_h^{k-1}}{0.07\omega_{k-1}}G_{k}
+\sum_{k=2}^{n+1} f_{k-1} G_{k-1}\nonumber\\
&&\quad+\frac{1}{2}\sum_{k=2}^{n+1}\Delta t_{k-1}\left(T(\|\tau_1^k\|_0^2+\|\tau_3^k\|_0^2)
+\frac{9}{\mu}\|\tau_2^k\|_0^2\right).
\end{eqnarray}
As mentioned before, for any fixed value of $\alpha$, the number
$\Gamma^*(\alpha)$ in~\eqref{cota_Gamma} is chosen to be the 
the positive real root of \eqref{poly}.
Since  $\omega_k\le \Gamma^*$, $k=1,\ldots,N-1$, it follows  that
\begin{eqnarray}\label{mas_hip2}
m_k:= \frac{17}{18}\frac{1}{\omega_k}
-\frac{(1+2\Gamma)(1+\Gamma)}{18\alpha}-\frac{\Gamma(1+2\Gamma)\omega_{k+1}}{18\alpha}&\ge& 0.
\end{eqnarray}
Let us also denote by
\begin{eqnarray}\label{ini_var}
E_0^v&=&(1+f_1)G_1+\mu\left(\frac{(1+2\Gamma)(1+\Gamma)\Delta t_1}{18\alpha}+\frac{\Gamma(1+2\Gamma)\Delta t_2}{18\alpha}\right)\|\nabla \cdot \be_h^1\|_0^2\nonumber\\
&&\quad+\mu\frac{(1+2\Gamma)(1+\Gamma)\Delta t_1}{18\alpha}\|\nabla \cdot \be_h^0\|_0^2.
\end{eqnarray}
Taking into account \eqref{ini_var},
\eqref{nonli_var4} can be written as
\begin{eqnarray}\label{nonli_var5}
&&G_{n+1}+\nu\sum_{k=2}^{n+1}\Delta t_{k-1}\|\nabla \be_h^k\|_0^2+
\mu\sum_{k=2}^{n+1}\Delta t_k m_k\|\nabla \cdot \be_h^k\|_0^2\le
\\
&&\quad\quad 
\Delta t_n\frac{(1+\Gamma)L_h^n}{0.07\omega_n} G_{n+1}
+\sum_{k=2}^{n}\left(\Delta t_{k-1}\frac{(1+\Gamma)L_h^{k-1}}{0.07\omega_{k-1}}+\omega_k f_k\right)G_{k}
\nonumber\\
&&\quad+\frac{1}{2}\sum_{k=2}^{n+1}\Delta t_{k-1}\left(T(\|\tau_1^k\|_0^2+\|\tau_3^k\|_0^2)
+\frac{9}{\mu}\|\tau_2^k\|_0^2\right) + E_0^v.\nonumber
\end{eqnarray}
Now assumption~\eqref{mas_hip} implies that $\sigma_{n+1}$ defined by
\begin{equation}\label{la_sigma}
\sigma_{n+1}=\biggl(1-\Delta t_n\frac{(1+\Gamma)L_h^n}{0.07\omega_n} \biggr)^{-1},
\end{equation}
is positive. Thus,
applying Lemma~\ref{gronwall2} we have
\begin{eqnarray}\label{nonli_var6}
&&G_{n+1}+\nu\sum_{k=2}^{n+1}\Delta t_{k-1}\|\nabla e_h^k\|_0^2+\mu\sum_{k=2}^{n+1}\Delta t_k m_k\|\nabla \cdot \be_h^k\|_0^2\le\\
&&
C_n
\biggl(E_0^v+
\frac{1}{2}\sum_{k=2}^{n+1}\Delta t_{k-1}\left(T(\|\tau_1^k\|_0^2+\|\tau_3^k\|_0^2)
+\frac{9}{\mu}\|\tau_2^k\|_0^2\right)\biggr),\nonumber
\end{eqnarray}
where $C_n$ is defined in \eqref{laCn}.

To conclude we bound the initial and truncation errors. For the initial errors, we argue
as in the fixed stepsize case. Since we assume $\bu_h^0=\bs_h^0$ so that $\be_h^0=0$. Consequently,
$$
G_1=\frac{\omega_0}{2(1+\omega_0)} \left\|\be_h^1\right\|_0^2 + \frac{1}{4}\left\|2\be_h^1\right\|_0^2 \le
\frac{3}{2} \left\|\be_h^1\right\|_0^2 ,
$$
and, in view of \eqref{ini_var},
\begin{equation}
\label{ini_var2}
E_0^v \le (1+f_1)\frac{3}{2} \left\|\be_h^1\right\|_0^2 + \omega_0\left(\frac{(1+2\Gamma)(1+\Gamma)}{18\alpha} + \frac{\Gamma(1+2\Gamma)\omega_1}{18\alpha}\right)
\mu\Delta t_0 \left\|\nabla\cdot\be_h^1\right\|_0^2.
\end{equation}

 The
first step is carried out with the implicit-explicit Euler method and then, in view of \eqref{ini_var2}, and arguing as we did with~\eqref{cota_E0} we get
\begin{equation}\label{cota_E0v}
E_0^v \le  C\left(1+\Delta t\left\|\bu\right\|_{L^\infty(W^{1,\infty})} \right)
\left(C_0^2(\Delta t)^4 + T\left(C_1^2h^{2k} + C_2^2h^{2l+2}\right)\right),
\end{equation}
where $C$ depends on~$\Gamma$, $1/\gamma$, $\omega_0$ and~$\omega_1$.

For the truncation errors we also argue as in the fixed stepsize case. Then, arguing exactly as in \eqref{eq:suma_truncs} we get
\begin{eqnarray}\label{trunca_vari}
\frac{1}{2}\sum_{k=2}^{n+1}\Delta t_{k-1}\left(T(\|\tau_1^k\|_0^2+\|\tau_3^k\|_0^2)
+\frac{9}{\mu}\|\tau_2^k\|_0^2\right)\le T\left(C_1^2h^{2k} + C_2^2h^{2l+2} + C_3^2(\Delta t)^4\right),
\end{eqnarray}
with the constants $C_i^2$ defined in \eqref{eq:C2} and avoiding writing the explicit dependency on $\omega_n$.
Inserting  \eqref{cota_E0v} and \eqref{trunca_vari} into \eqref{nonli_var6} we finally reach \eqref{cota_def}.

To conclude the proof, as in the fixed step size, we now show that taking $h$ sufficiently small, the CFL type condition~\eqref{CFL2-u} implies~\eqref{CFL2}. Arguing as in the fixed step-size, it is possible to choose $h_{0,1}$ such that $\left\|\bs_h^n - \bu^n\right\|_\infty \le \left\|\bu\right\|_{L^\infty(L^\infty)} /12$ for $h\le h_{0,1}$, which implies
\begin{equation}
\label{new_bsh}
\left\| \bs_h^n\right\|_\infty \le \frac{13}{12}\left\|\bu\right\|_{L^\infty(L^\infty)},
\end{equation}
and, since
$\left\|\hat\bs_h^n\right\| \le (1+2\Gamma)\left\|\bs_h\right\|_{L^\infty(L^\infty)}$, by writing $\hat\bu_h^n=(\hat\bu_h^n-\hat\bs_h^n) + \hat\bs_h^n$, it also holds
\begin{equation}
\left\|\hat\bu_h^n\right\|_\infty \le c_{\rm inv}h^{-d/2}\left((1+\Gamma) \left\|\bs_h^n-\bu_h^n \right\|_0 + \Gamma
 \left\|\bs_h^{n-1}-\bu_h^{n-1} \right\|_0\right) + (1+2\Gamma) 
 \frac{13}{12}\left\|\bu\right\|_{L^\infty(L^\infty)}.
\label{eq:ulti}
\end{equation}
If an $h_{0,2}>0$ exists such that for $h\le h_{0,2}$ both the right-hand side of~\eqref{cota_ini} and the right-hand side of~\eqref{cota_def}
multiplied by $0.07(1+\Gamma)/\gamma$
are smaller than $h^d \left\|\bu\right\|_{L^\infty(L^\infty)}^2/(12c_{\rm inv})^2$ it follows
that 
$$
\|\bs_h^k-\bu_h^k\|_0\le \frac{h^{d/2}}{12c_{\rm inv}} \|\bu\|_{L^\infty(L^\infty)},\quad k=1,\cdots,n.
$$
Thus, in view of~\eqref{eq:ulti}, it follows that
$$
\left\|\hat\bu_h^n\right\|_\infty \le (1+2\Gamma) \frac{7}{6} \left\|\bu\right\|_{L^\infty(L^\infty)},
$$
so that \eqref{CFL2} holds as a consequence of assumption~\eqref{CFL2-u}. We now argue why $h_{0,2}$ exists, and this will finish the proof. The exponents of the powers of~$h$ on right-hand sides of~\eqref{cota_ini} and~\eqref{cota_def} are larger than $d$, so that they will decay with $h$ faster than $h^d$ if the exponential term
in~\eqref{laCn} remains bounded as $h\rightarrow 0$. To show that this is the case, we first notice that,
in view of the expression of~$L_h^n$ in~\eqref{eq:Ln} and recalling~\eqref{cotainfty1} and~\eqref{new_bsh}, we have
$$
L_h^n \le (1+2\Gamma) \left(\frac{(1+C_\infty)}{2}\left\|\bu\right\|_{L^\infty(W^{1,\infty})} + \frac{9\alpha}{2\mu}\left(\frac{13}{12}\right)^2
\left\|\bu\right\|_{L^\infty(L^\infty)}^2\right) + \frac{\kappa+1}{T}
$$
and in view of the expression of~$f_k$ in~\eqref{laf}, we also have
$$
f_k \le \frac{(1+\Gamma)(1+2\Gamma)^3}{0.14\gamma}\Delta t_n(1+C_\infty)\left\|\bu\right\|_{L^\infty(W^{1,\infty})}+\frac{K_0}{\gamma}(1+\Gamma)^2 \left(\left|\omega_{n-1}-1\right|
 + \left|\omega_n-1\right|\right).
 $$
Finally, due to assumption~\eqref{mas_hip}, $\sigma_n$ defined in~\eqref{la_sigma} satisfies $\sigma_n<\sigma$ and noticing that the sum of the increments $\Delta t_n$ is always bounded by~$T$ and recalling~assumption~\eqref{ratios_Lambda}, it is clear that the constants $C_n$ in~\eqref{laCn} remain bounded as $h\rightarrow0$.
\end{proof}

\begin{remark}
In view of \eqref{CFL2-u} we notice that the ratios $\Delta t_n/h^2$ must remain bounded. 
As stated in Remark \ref{re:2}, the size of the constant $\kappa$ in the CFL condition \eqref{CFL2-u}
reflects in the error bound \eqref{cota_def} through an exponential factor.
More precisely, considering also the size of the constant $\sigma$ in \eqref{mas_hip}, taking into account $\sigma_n<\sigma$ (with $\sigma_n$ defined in \eqref{la_sigma}) and in view of the expression of~$L_h^n$ in~\eqref{eq:Ln} the influence of the constants $\kappa$ and $\sigma$ 
 affect the constant $C_n$ in~\eqref{nonli_var6} by a factor of size
$$
\exp\biggl(\kappa \sigma\frac{1+\Gamma}{0.07T} \sum_{k=1}^{n} \frac{\Delta t_k}{\omega_{k}} \biggr)
=\exp\biggl(\kappa \sigma\frac{1+\Gamma}{0.07T} \sum_{k=1}^{n}\Delta t_{k-1} \biggr)
\le \exp\biggl(\kappa \sigma\frac{1+\Gamma}{0.07}\biggr).
$$
On the other hand, the value of $\Gamma^*(\alpha)$ in \eqref{cota_Gamma} increases with the value of the free parameter $\alpha$. However,
arguing as above, we can observe that
the effect of increasing $\alpha$ (to allow for larger increases in step length) also affects the constant $C_n$ in~\eqref{laCn} since the term $\frac{9\alpha}{2\mu}\|\hat \bs_h^n\|_\infty^2$ is present in the definition of~$L_h^n$ in~\eqref{eq:Ln}.
\end{remark}
\bigskip


\begin{Theorem}\label{th:th2-si}
  Fix $\sigma>1$,
and let $\Delta t_n$ and~$h$ satisfy
$$
1-\Delta t_n\frac{(1+\Gamma){\cal L}_h^n}{0.07\omega_n}>\frac{1}{\sigma}, \quad n\ge 1.
$$
where
$$
{\cal L}_h^n=\frac{\|\nabla \bs_h^{n+1}\|_\infty}{2}+\frac{9\alpha}{2\mu}\| \bs_h^{n+1}\|_\infty^2
+\frac{1}{T},
$$
Let $G_n$ be as defined in~\eqref{eq:elG_n} for the error~$\be_h^n$ of the semi-implicit method~\eqref{semimp}.
Then, the following bound holds with $m_k$ defined in \eqref{mas_hip2}
\begin{eqnarray}\label{cota_def-si_semi}
&&G_{n+1}+\nu\sum_{k=2}^{n+1}\Delta t_{k-1}\|\nabla e_h^k\|_0^2+\mu\sum_{k=2}^{n+1}\Delta t_k m_k\|\nabla \cdot \be_h^k\|_0^2\le\\
&&
C_n
\biggl((C_0^2+TC_3^2)(\Delta t)^4 + T\left(C_1^2h^{2k} + C_2^2h^{2l+2}\right)
\biggr),\nonumber
\end{eqnarray}
where
\begin{equation*}
C_n=\exp\left(\sigma_{n+1}\Delta t_{n}\frac{(1+\Gamma){\cal L}_h^{n}}{0.07\omega_{n}}+\sum_{k=2}^{n}
\left(\sigma_k\Delta t_{k-1}\frac{(1+\Gamma){\cal L}_h^{k-1}}{0.07\omega_{k-1}}+\omega_k f_k\right)\right),
\end{equation*}
and the constants
$f_k$ and~$\sigma_k$ are defined in \eqref{laf}, \eqref{la_sigma}, respectively,
and the constants $C_i^2$ are defined in \eqref{eq:C2} and~\eqref{eq:C0}. 
\end{Theorem}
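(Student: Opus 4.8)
The plan is to mirror the proof of Theorem~\ref{th:th2}, exploiting the simplifications recorded in Remark~\ref{semimp-nl}. For the semi-implicit scheme~\eqref{semimp} the error identity is~\eqref{fully_error} with the nonlinear contribution $b(\hat\bs_h^n,\hat\bs_h^n,\bv_h)-b(\hat\bu_h^n,\hat\bu_h^n,\bv_h)$ replaced by $b(\hat\bs_h^n,\bs_h^{n+1},\bv_h)-b(\hat\bu_h^n,\bu_h^{n+1},\bv_h)$ and $\tau_3^{n+1}$ replaced by $\tau_4^{n+1}$. First I would set $\bv_h=\be_h^{n+1}$ in this identity and invoke~\eqref{eq:pvar1_d} of Lemma~\ref{le:tech} to rewrite the time-derivative term as $G_{n+1}-G_n$ plus the nonnegative quantity $\frac{\omega_n}{2(1+\omega_n)}\|\be_h^{n+1}-\hat\be_h^n\|_0^2$ minus a remainder controlled by $\frac{K_0}{\gamma}(1+\Gamma)^2(|\omega_{n-1}-1|+|\omega_n-1|)G_n$.

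The key point is that, by Lemma~\ref{le:nl-si} (bound~\eqref{eq:nonli1-0_var-si}), the nonlinear difference is estimated \emph{without} the term $2\|\hat\bu_h^n\|_\infty c_{\rm inv} h^{-1}\|\be_h^{n+1}-\hat\be_h^n\|_0\|\be_h^{n+1}\|_0$ that forced the CFL condition in Theorem~\ref{th:th2}; consequently no analogue of~\eqref{eq:cota_ref_2}, of condition~\eqref{CFL2}, or of the continuation argument bounding $\|\hat\bu_h^n\|_\infty$ is needed, and the nonnegative term $\frac{\omega_n}{2(1+\omega_n)}\|\be_h^{n+1}-\hat\be_h^n\|_0^2$ coming from~\eqref{eq:pvar1_d} is simply discarded. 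After inserting~\eqref{eq:nonli1-0_var-si}, the coefficient multiplying $\|\be_h^{n+1}\|_0^2$ is exactly ${\cal L}_h^n=\frac{\|\nabla\bs_h^{n+1}\|_\infty}{2}+\frac{9\alpha}{2\mu}\|\bs_h^{n+1}\|_\infty^2+\frac{1}{T}$, the extra $1/T$ arising, as in~\eqref{eq:er2}, from the standard absorption of $(\tau_1^{n+1}+\tau_4^{n+1},\be_h^{n+1})$; this is why ${\cal L}_h^n$ is $L_h^n$ of~\eqref{eq:Ln} with $\hat\bs_h^n$ swapped for $\bs_h^{n+1}$ and $\kappa=0$. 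Using~\eqref{eq:cota_inf_Gn} I would then convert the $\|\be_h^{n+1}\|_0^2$ term into $\Delta t_n\frac{(1+\Gamma){\cal L}_h^n}{0.07\omega_n}G_{n+1}$ and the $\|\be_h^n\|_0^2$, $\|\be_h^{n-1}\|_0^2$ terms into a multiple of $G_n$, exactly as in~\eqref{nonli_var3}; the grad-div terms are handled verbatim, yielding after summation the coefficients $m_k$ of~\eqref{mas_hip2}, which are nonnegative by the choice $\Gamma\le\Gamma^*(\alpha)$ in~\eqref{cota_Gamma}.

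Next I would apply the Gronwall Lemma~\ref{gronwall2} with $a_n=G_{n+1}$, with $\gamma_n$ the coefficient $\Delta t_n\frac{(1+\Gamma){\cal L}_h^n}{0.07\omega_n}$ of $G_{n+1}$ --- which is strictly less than $1$ precisely by the hypothesis $1-\Delta t_n\frac{(1+\Gamma){\cal L}_h^n}{0.07\omega_n}>\frac{1}{\sigma}$, so that $\sigma_{n+1}$ of~\eqref{la_sigma} satisfies $\sigma_{n+1}<\sigma$ --- and with $\delta_j=\omega_{j+1}f_{j+1}$ built from $f_k$ of~\eqref{laf}, to obtain~\eqref{cota_def-si_semi} with $C_n$ as stated. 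The initial datum $E_0^v$ of~\eqref{ini_var} and the truncation sums are then bounded exactly as in the proof of Theorem~\ref{th:th2}: since $\bu_h^0=\bs_h^0$ one has $\be_h^0=0$ and $G_1\le\frac{3}{2}\|\be_h^1\|_0^2$, the first IMEX-Euler step is controlled by~\eqref{cota_ini} with $C_0^2$ from~\eqref{eq:C0}, and~\eqref{eq:suma_truncs}--\eqref{trunca_vari}, now with $\tau_4^k$ in place of $\tau_3^k$ (so using~\eqref{eq:cota_t4} instead of~\eqref{eq:cota_t3}), supply the remaining $C_i^2$ contributions of~\eqref{eq:C2}.

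The only step needing a specific word is the boundedness of $C_n$ as $h\to 0$, and here it is immediate: by~\eqref{cotainfty0}--\eqref{cotainfty1} and~\eqref{new_bsh}, $\|\nabla\bs_h^{n+1}\|_\infty$ and $\|\bs_h^{n+1}\|_\infty$ are bounded uniformly in $h$ in terms of $\|\bu\|_{L^\infty(W^{1,\infty})}$ and $\|\bu\|_{L^\infty(L^\infty)}$, hence so is ${\cal L}_h^n$; combined with $\sum_n\Delta t_n\le T$, assumption~\eqref{ratios_Lambda} on the step-ratio variation, and $\sigma_n<\sigma$, the exponent defining $C_n$ stays bounded, so the $O((\Delta t)^4)+O(h^{2k})+O(h^{2l+2})$ right-hand side is controlled for $h$ small. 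I do not anticipate any real obstacle: the argument is the proof of Theorem~\ref{th:th2} with the CFL machinery deleted; the only point to watch is purely bookkeeping --- that the new-time-level factor $\bs_h^{n+1}$ appearing in~\eqref{eq:nonli1-0_var-si} enters ${\cal L}_h^n$ (hence $\sigma_{n+1}$ and $C_n$) and not the coefficients multiplying $G_n$, so that the hypothesis structure of Lemma~\ref{gronwall2} is preserved.
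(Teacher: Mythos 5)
Your proposal is correct and follows essentially the same route as the paper: the paper's own proof of Theorem~\ref{th:th2-si} simply repeats the argument of Theorem~\ref{th:th2} with the nonlinear difference bounded by Lemma~\ref{le:nl-si} (so $\kappa=0$, $\hat\bs_h^n$ replaced by $\bs_h^{n+1}$, $\tau_3$ by $\tau_4$) and with the CFL/continuation machinery deleted, which is exactly what you do. Your additional bookkeeping (the $1/T$ in ${\cal L}_h^n$, the role of~\eqref{eq:cota_inf_Gn}, the Gronwall setup, and the boundedness of $C_n$) is consistent with the paper.
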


\begin{proof} The proof follows that of~Theorem~\ref{th:th2} with the changes that we now comment on. Indeed, it is easy to see that~\eqref{er_var_1}
also holds for the semi-implicit method~\eqref{semimp} with the first term on the right-hand side of~\eqref{er_var_1} replaced by
$$
|b(\hat \bs_h^n,\bs_h^{n+1},\be_h^{n+1})-b(\hat\bu_h^n, \bu_h^{n+1},\be_h^{n+1})|
$$
and~$\tau_3$ by~$\tau_4$. Then, recalling~Lemma~\ref{le:nl-si} and Remark~\ref{semimp-nl}, we have that 
\eqref{nonli_var2} holds with~$\kappa=0$, $\hat\bs_h^n$ replaced by~$\bs_h^{n+1}$ and~$\tau_3$ by~$\tau_4$. The proof is concluded following the steps of the proof of~Theorem~\ref{th:th2}, but without having to prove that the CFL condition~\eqref{CFL2} holds.
\end{proof} 

\begin{remark}\label{re_order_var} Analogous comments to those in Remark \ref{re_order} apply in this case taking into account 
\eqref{cota_def}, \eqref{cota_def-si_semi} 
and the definition of $G_n$ in \eqref{eq:elG_n}.
\end{remark}


\section{Numerical Experiments}
\label{Se:Numer}

We test methods~\eqref{fully_discrete} and~\eqref{semimp} with variable step size. We now describe the algorithm used, which follows standard procedures in the numerical integration of ordinary differential equations (ODEs).  For the implementation of the method, we used a variable formula  strategy as described in~\cite[\S~III.5]{Hairer} for the fully nonlinear BDF formulae, but with explicit treatment of the convective term as specified in~\eqref{fully_discrete}. In particular we notice that the local error estimate of the method of order~$k$ is given by
$$
{\rm EST}_n=\frac{\Delta t_n}{t_{n+1}-t_{n-k}} \left\|{\cal U}^{n,k}_h\right\|_0,
$$
where 
$$
{\cal U}^{n,k}_h= \prod_{i=0}^{k-1}(t_{n+1} -t_{n-i}) \bu_h[t_{n+1},\ldots,t_{n-k}],
$$
where $\bu_h[t_{n+1},\ldots,t_{n-k}]$ is the standard $(k+1)$-th divided difference based on $t_{n+1},\ldots,t_{n-k}$ and
$\bu_h^{n+1},\ldots, \bu_h^{n-k}$. This corresponds to the fully nonlinear BDF, but, nevertheless, it was used with the methods~(\ref{fully_discrete}) and~\eqref{semimp}.
At each time level~$t_n$, the estimation ${\rm EST}_n$ is compared with the quantity
$$
{\rm TOL}_n =TOL_r \left(\max\left\{\left\| \bu_h^{n+1}\right\|_0,\left\| \bu_h^{n}\right\|_0\right\}+0.001\right),
$$
where $TOL_r$ is a given tolerance.
If ${\rm EST_n}>{\rm TOL_n}$, $\bu_h^{n+1}$ is not considered sufficiently accurate and is rejected. It is then recomputed from $t_n$ with a new step length given by
\begin{equation}
\label{eq:change}
\Delta t_{n}^{\rm new} = 0.9\Delta t_n \sqrt[k+1]{\frac{{\rm TOL}_n}{{\rm EST}_n}}.
\end{equation}
On the contrary, if ${\rm EST_n}<{\rm TOL_n}$, then, $\bu_h^{n+1}$ is accepted and the algorithm proceeds to compute $\bu_h^{n+2}$ with a step length $\Delta t_{n+1}$ given by the right-hand side of~\eqref{eq:change}.

The algorithm starts with~$k=1$ and $\Delta t_0=\Delta t_1=\sqrt{TOL_r}/100$. At $t_1$ the first local error estimation~${\rm EST}_1$ is computed. If ${\rm EST}_1> {\rm TOL}_1$ the computation is restarted again from $t_0$ with the step size changed according to~\eqref{eq:change}. From $t_2$ onwards, estimates corresponding to the two-step BDF can be estimated. When these estimates are smaller than those corresponding to~$k=1$, then $k$ is switched to $k=2$. In the experiments below, no more than eight or ten steps were performed with $k=1$.

We now comment on the solution of the linear systems to be solved at each step, where, again, we follow standard procedures in the numerical integration of ODEs. If we denote by~$\by^n$ the vector containing the coefficients of the velocity~$\bu_h^{n+1}$ and the pressure
$p_h^{n+1}$ in the corresponding nodal basis of the finite element spaces, then, since~\eqref{fully_discrete} and~\eqref{semimp} are linear in~$\bu_h^{n+1}$ and~$p_h^{n+1}$, to find~$\by^n$ one has to solve a linear system
\begin{equation}
\label{eq:system}
A_n \by^n = \bb^n.
\end{equation}
This was solved by iterative refinement. Starting with an approximation~$\by^{n,[0]}= \hat \by^n$, extrapolated from the previous~$k$ values ($k=1,2$), the approximation is updated as
$
\by^{n,[j+1]} = \by^{n,[j]} + \be^{n,[j]}
$,
where $\be^{n,[j]}$ is obtained by solving the system~$A_m \be^{n,[j]}=\bb^n - A_n\by^{n,[j]} $, where, we notice, $A_m$ has been factored on time level~$t_m\le t_n$. The iteration is stopped either when the velocity in two consecutive iterations
satisfies
\begin{equation}
\label{eq:converge}
\left\| \bu_h^{n,[j+1]} - \bu_h^{n,[j]}\right\|_0 \le \min\left\{10^{-8},\frac{TOLr}{100} \right\}\left(\left\|\bu_h^{n-1}\right\|_0+ 0.001\right),
\end{equation}
in which case we set $\bu_n^{n}=\bu_n^{n,[j+1]}$, or if five iterations are performed without ever \eqref{eq:converge} being satisfied, in which case, $A_n$ is factored , replaces~$A_m$ and system~\eqref{eq:system} is solved. As we will see below, the number of factorizations never exceeded 5\% of the total number of steps.

The code was programmed in {\sc Matlab} and for the factorizations in linear systems we used command {\tt decomposition} with no extra arguments. The set of equations obtained from~(\ref{fully_discrete}) when $\bv_h=0$ was rewriten as
$$
-(\nabla\cdot \bu_h^{n+1},q_h)=0,
$$
so that matrices~$A_n$ were symmetric (although indefinite).

\subsection{Problem with known solution}


We consider the Navier-Stokes equations in  the domain $\Omega=[0,1]^2$ with $T=4$, and the forcing term~$\bff$ chosen so that the solution $\bu$ and~$p$ are given by
\begin{eqnarray}
\label{eq:exactau}
\bu(x,y,t)&=& \frac{6+4\cos(4t)}{10} \left[\begin{array}{c} 8\sin^2(\pi x) (2y(1-y)(1-2y)\\
-8\pi\sin(2*\pi x) (y(1-y))^2\end{array}\right]\\
p(x,y,t)&=&\frac{6+4\cos(4t)}{10} \sin(\pi x)\cos(\pi y).
\label{eq:exactap}
\end{eqnarray}

We present results with the $P_2/P_1$ pair of mixed finite-elements on a regular triangulation with SW-NE diagonals.
The meshes have $N=6$, $12$, $24$ and~$48$ subdivisions in each coordinate directions, and the tolerances for the BDF2 on these meshes are, respectively, $10^{-4}$, $10^{-5}$, $10^{-6}$ and~$10^{-7}$. We checked that they are sufficiently small so that the error arising from time discretization is negligible when compared to that arising form spatial discretization. In the experiments in this section, the value of the grad-div parameter is~$\mu=0.05$. 

In Fig.~\ref{fig:conv}, for the final time $t=4$, we show the $L^2$ errors in velocity
\begin{equation}
\label{errsu_en_linfty}
\|\bu_h^n-I_h(\bu^n)\|_0,
\end{equation}
where $I_h$ is the standard Lagrange interpolant. In Fig.~\ref{fig:conv}, as in the rest of the pictures in this section, results of IMEX method~\eqref{fully_discrete} are joined by continuous lines, while those of the semi-implict method~\eqref{semimp} by discontinuous lines. It can be seen that continuous and discontinuous lines are super imposed, reflecting the fact that the errors shown in Fig.~\ref{fig:conv} correspond to spatial discretization and are independent of the method chosen for time integration.
\begin{figure}
\begin{center}
\includegraphics[height=5.8truecm]{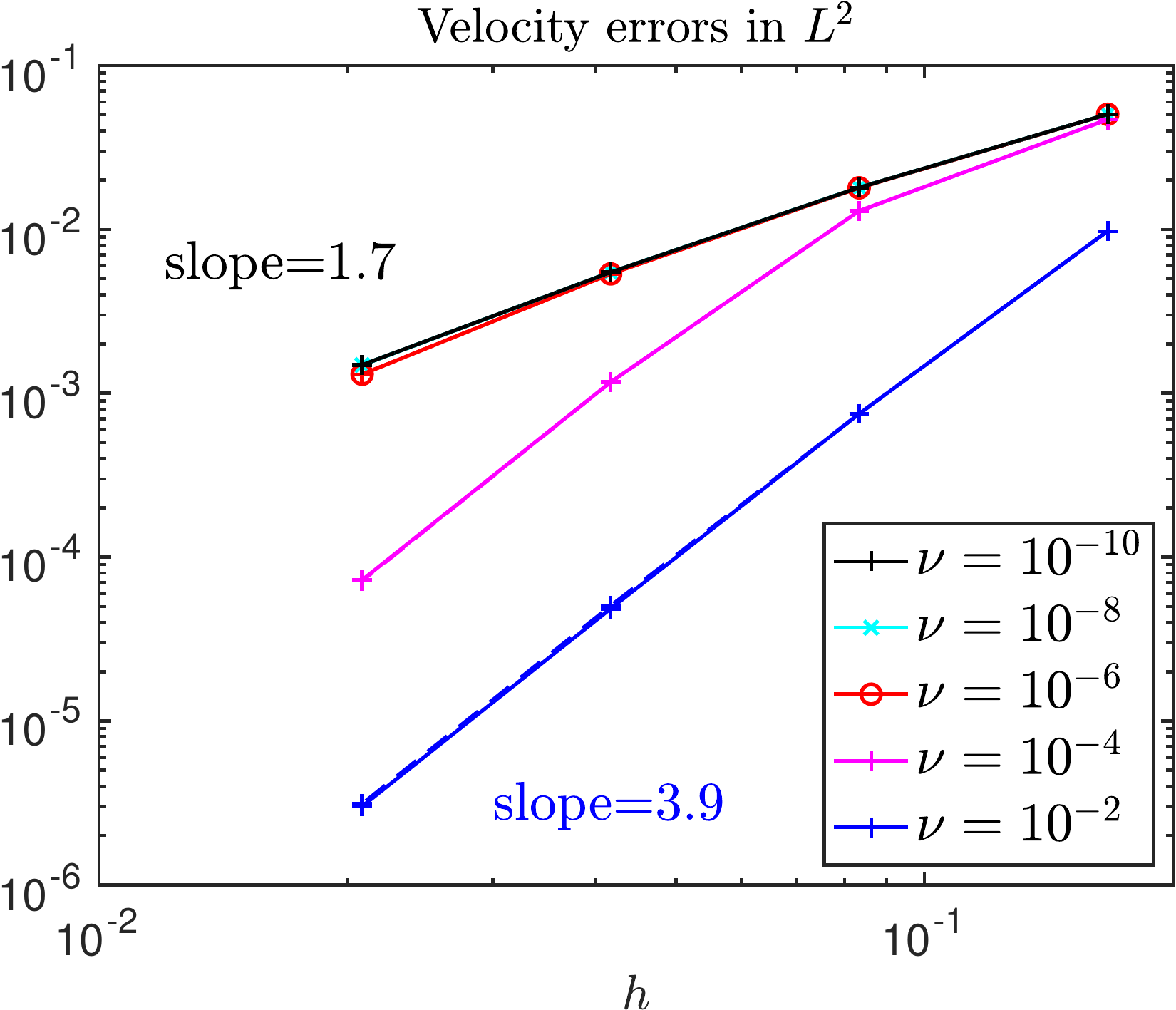}
\end{center}
\caption{Velocity errors~(\ref{errsu_en_linfty}) for $T=4$ and~$\mu=0.05$. Continuous lines correspond to method~(\eqref{fully_discrete} and continuous lines to~method~\eqref{semimp}.}
\label{fig:conv}
\end{figure}
It can be seen that, as $h$ decreases, the errors also decrease, but with a higher rate for the two largest values of the viscosity, $\nu=10^{-2}$ and~$\nu=10^{-4}$.  The results corresponding to~$\nu=10^{-6}$, $\nu=10^{-8}$ and $\nu=10^{-10}$ coincide and are on top of one another.
We also show the slopes of a least squares fit to the results corresponding to~$\nu=10^{-2}$ (in blue) and $\nu=10^{-10}$,
confirming the analysis that the rate of decay is $O(h^2)$ independent of~$\nu$ in the convection dominated regime, i.e., for $\nu$ small enough. For $\nu=10^{-2}$, and the last three results of~$\nu=10^{-4}$, the slope close to~4 reflects that the numerical approximation~$\bu_h$ is supraconvergent, in the sense that the error~\eqref{errsu_en_linfty} decays faster that $\|\bu^n-I_h(\bu^n)\|_0$, which is $O(h^3)$.

We check that our implementation uses variable step sizes in Fig.~\ref{fig:deltates}, where we show the step lengths of method~\eqref{fully_discrete} (the ones corresponding to~method~\eqref{semimp} being hardly distinguishable). They correspond  to $\nu=10^{-6}$, $TOL_r=10^{-5}$ and the mesh with $N=12$ subdivisions in each coordinate directions.  
After the initial steps corresponding to the first order method, the step lengths grow quickly, and then vary between~$0.001$ and~$0.005$ approximately.  In Fig.~\ref{fig:deltates} we marked with a red cross the locations where a step was rejected. We see that the number of rejections is really low. Most of the results corresponding to different values of~$\nu$, $TOL_r$ and~$N$ showed similar variations.
\begin{figure}
\begin{center}
\includegraphics[height=5.8truecm]{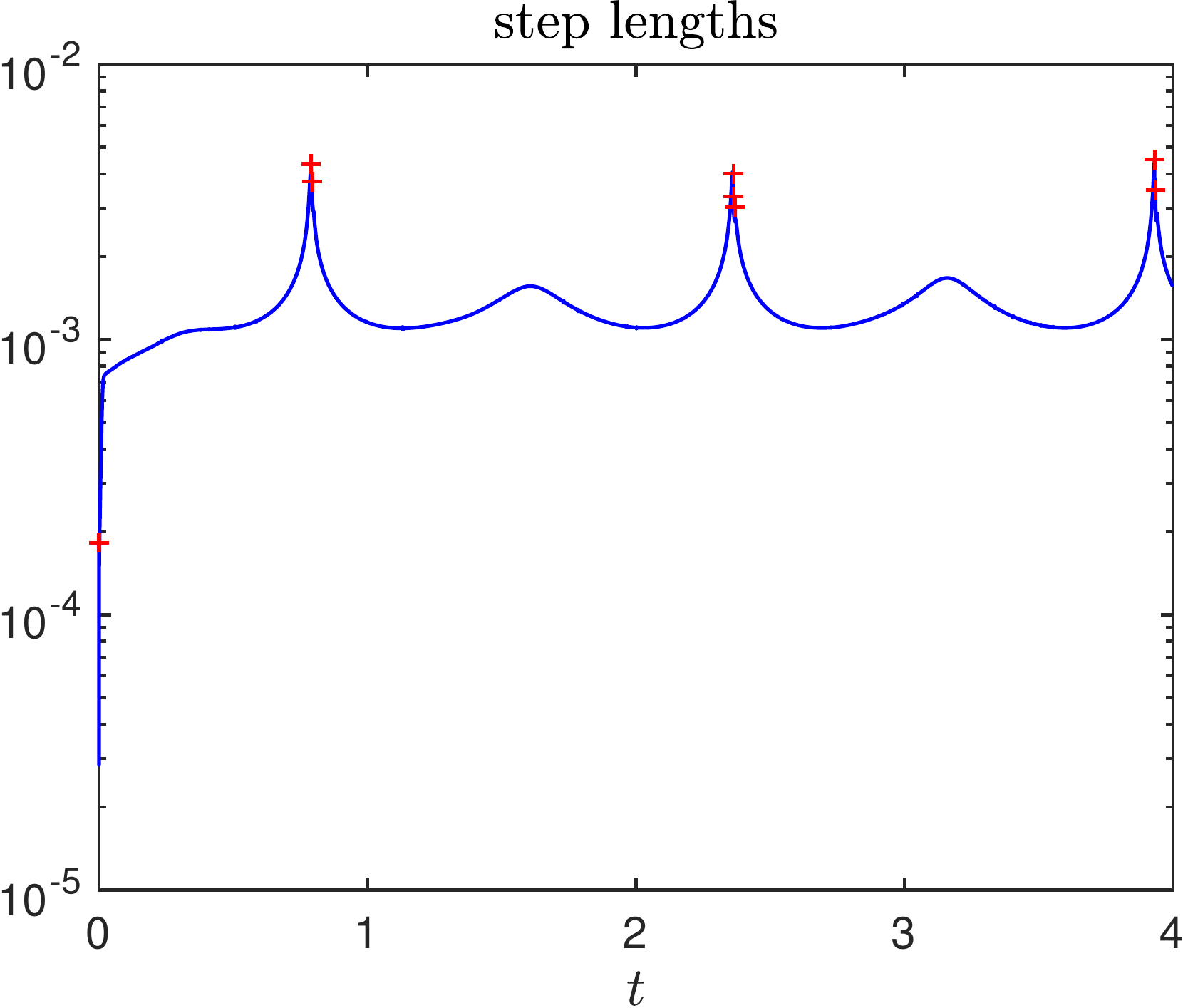}
\end{center}
\caption{Step lengths corresponding to $\nu=10^{-6}$, $TOL_r=10^{-5}$ and $N=12$. Rejections marked with a red cross.}
\label{fig:deltates}
\end{figure}

In Fig.~\ref{fig:steps-facts} we compare the number of factorizations with the number of steps for the results shown in Fig.~\ref{fig:conv} (recall each factorization is reused in subsequent steps for iterative refinement until failure to converge, where it is updated). We can see that the number of factorizations hardly grows as $h$ (and the corresponding tolerances) are decreased, and that at worst (large~$h$, small~$\nu$) they are hardly more than 5\% of the number of steps. Some differences can be seen between methods \eqref{fully_discrete} and~\eqref{semimp},  for large values of~$\nu$. We do not have an explanation for them. We notice, however, that the differences between the methods become less significant for smaller values of~$\nu$.

\begin{figure}
\begin{center}
\includegraphics[height=5.8truecm]{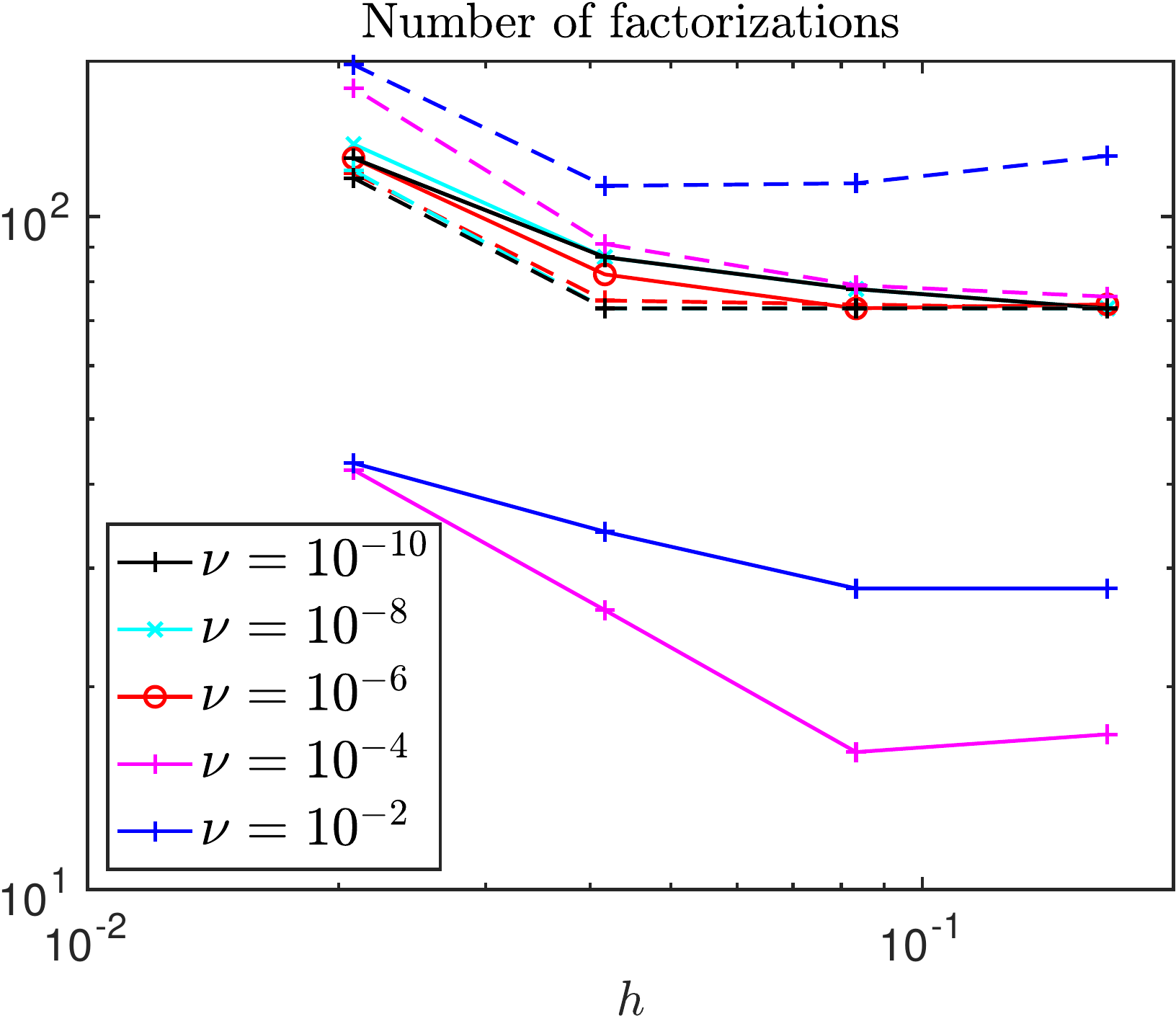} \quad 
\includegraphics[height=5.8truecm]{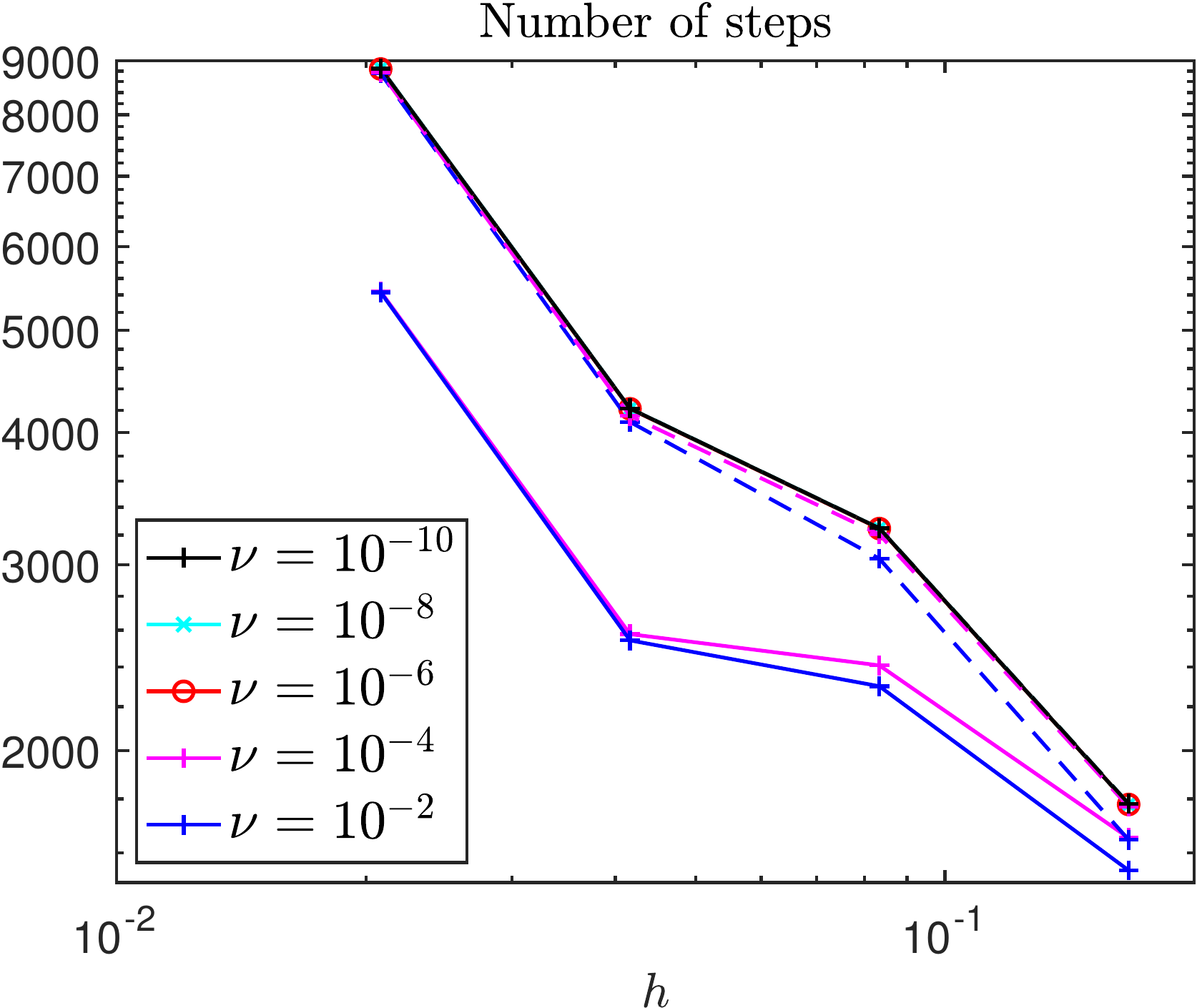}
\end{center}
\caption{Number of factorizations (left) and~steps (right) for results shown in~Fig.~\ref{fig:conv}.}
\label{fig:steps-facts}
\end{figure}

Next we check if the CFL-type condition~\eqref{CFL2-u} is necessary in practice for the IMEX method~(\ref{fully_discrete}), whereas this is not the case of the semi-implicit method~\eqref{semimp}. For that purpose, we repeat the above computations (and including also meshes with $N=96$ subdivisions in each coordinate direction) but with tolerance $TOL_r=10^{-4}$ for all meshes. Notice that in principle the number of steps depends on the tolerance, so, if the tolerance is kept constant while the spatial mesh size $h$ is decreased, the number of steps should not change much. On the other hand, if stability worsens as $h$ is decreased, errors arising from time discretization increase with every step, and will become larger than the tolerance more easily, forcing the algorithm to take smaller step sizes, this resulting in a larger number of steps. In Fig.~\ref{fig:steps-facts4} we show the number of steps (left plot). For the IMEX method (continuous lines), we see that although they remain fairly constant for $\nu=10^{-2}$, this is not the case of the rest of the values of the viscosity. Indeed, for the three smallest values of the viscosity, the number of steps roughly doubles from $h=1/24$ to~$h=1/48$, and it is multiplied by three from~$h=1/48$ to~$h=1/96$, approaching the factor~$4$ predicted by the CFL-type condition~\eqref{CFL2-u}. On the other hand, for the semi-implicit method (discontinuous lines) we see that, contrary to the IMEX method, the number of steps fairly constant as~$h$ is refined. As we have shown, this method does not require the CFL-type condition~\eqref{CFL2-u} to converge. On the contrary, in view of~Fig.~\ref{fig:steps-facts4}, we may conclude that method with explicit convection~(\ref{fully_discrete}) indeed suffers from restriction~\eqref{CFL2-u}.

 \begin{figure}
\begin{center}
\includegraphics[height=5.8truecm]{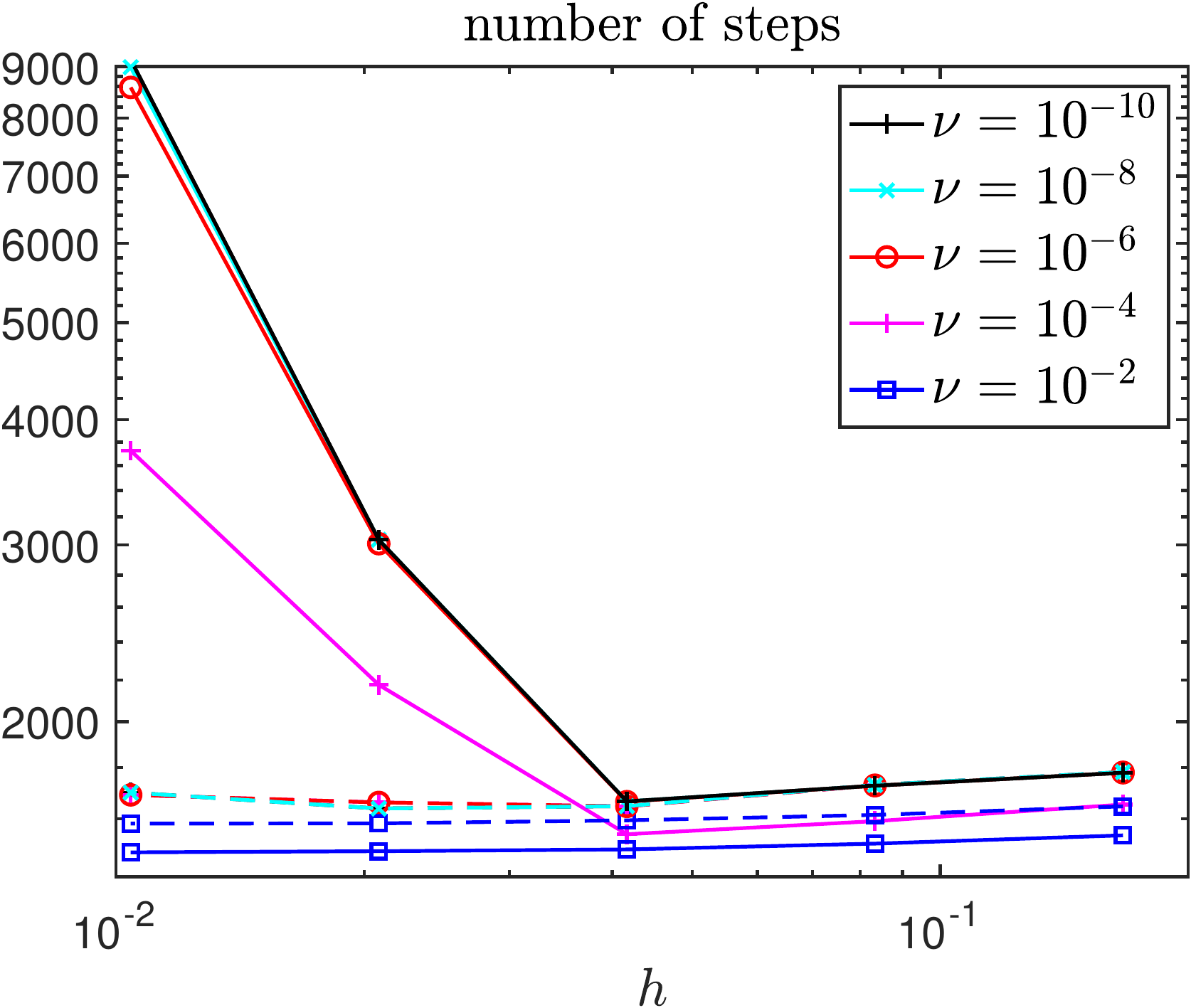} \quad 
\includegraphics[height=5.8truecm]{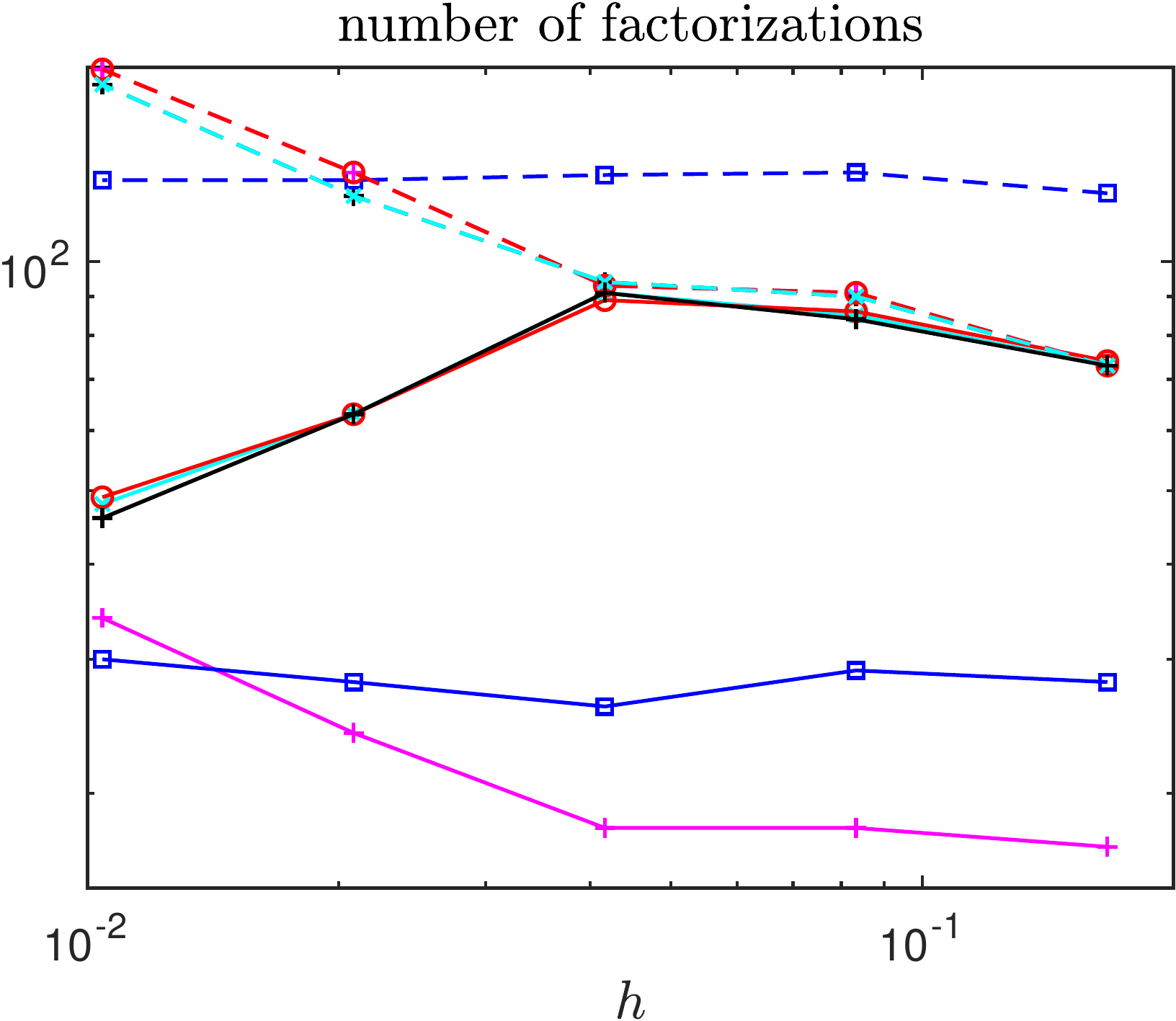}
\end{center}
\caption{Number of steps (left) and~factorizations (right) for results for $TOL=10^{-4}$ for all meshes. Color and symbol codes on the right plot are those of the left one. Discontinuous lines correspond to the semi-implicit method~\eqref{semimp} and continuous ones for the IMEX method~\eqref{fully_discrete}.}
\label{fig:steps-facts4}
\end{figure}

In Fig~\ref{fig:steps-facts4} we can also see that the number of factorizations remains fairly independent of the number of steps, as it was the case in the previous example where tolerances decreased with~$h$.

\subsection{Flow past a cylinder}
\label{Se:Flow}
We now present results for the well-known benchmark problem defined in~\cite{bench}. The domain is given by
$$
\Omega=(0,2.2)\times(0,0.41)/\left\{ (x,y) \mid (x-0.2)^2 + (y-0.2)^2 \le 0.0025\right\}
$$
and the time interval is~$[0,8]$. The velocity is identical on both vertical sides, and is given by
$$
\bu(0,y)=\bu(2.2,y)=\frac{6}{0.41^2}\sin\Bigl(\frac{\pi t}{8}\Bigr)\left(\begin{array}{c} y(0.41-y)\\ 0\end{array}\right).
$$
In the rest of the boundary the velocity is set $\bu ={\bf 0}$. At~$t=0$, the initial velocity is ${\bu}={\bf 0}$. The viscosity is set to~$\nu=10^{-3}$
and the forcing term is $\bff={\bf 0}$. 

It is well-known that around $t=4$ a vortex sheet develops behind the cylinder, as it can be seen in Fig.~\ref{speed} where we show
the speed and velocity fields for $t=5$ to~$t=8$. We use the same scale in the four plots of the velocity fields, and the results
we obtain are virtually identical to those in~\cite[Fig.~2]{John_ref}.
\begin{figure}
\begin{center}
\includegraphics[height=5.5truecm]{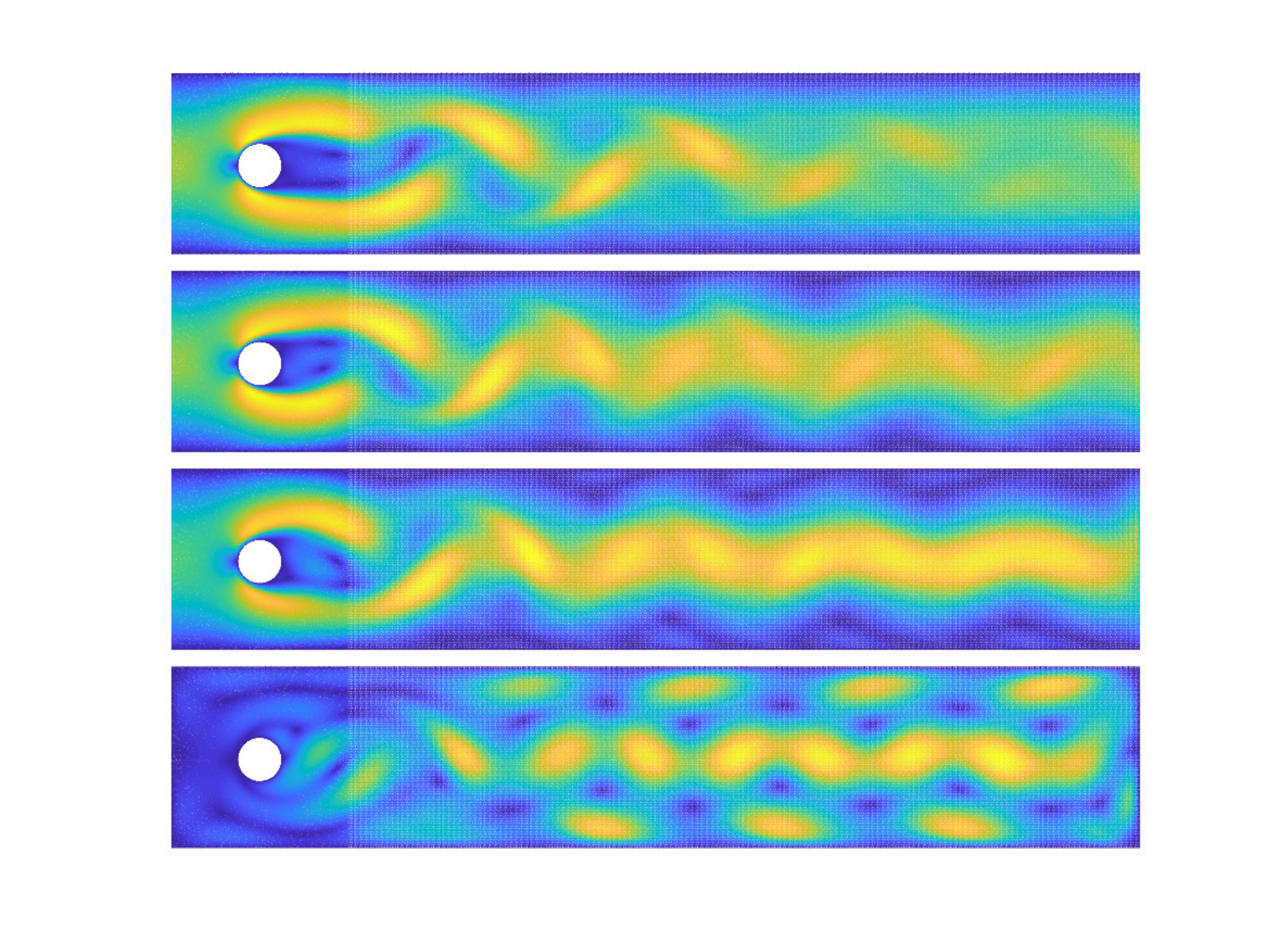}
 \includegraphics[height=5.5truecm]{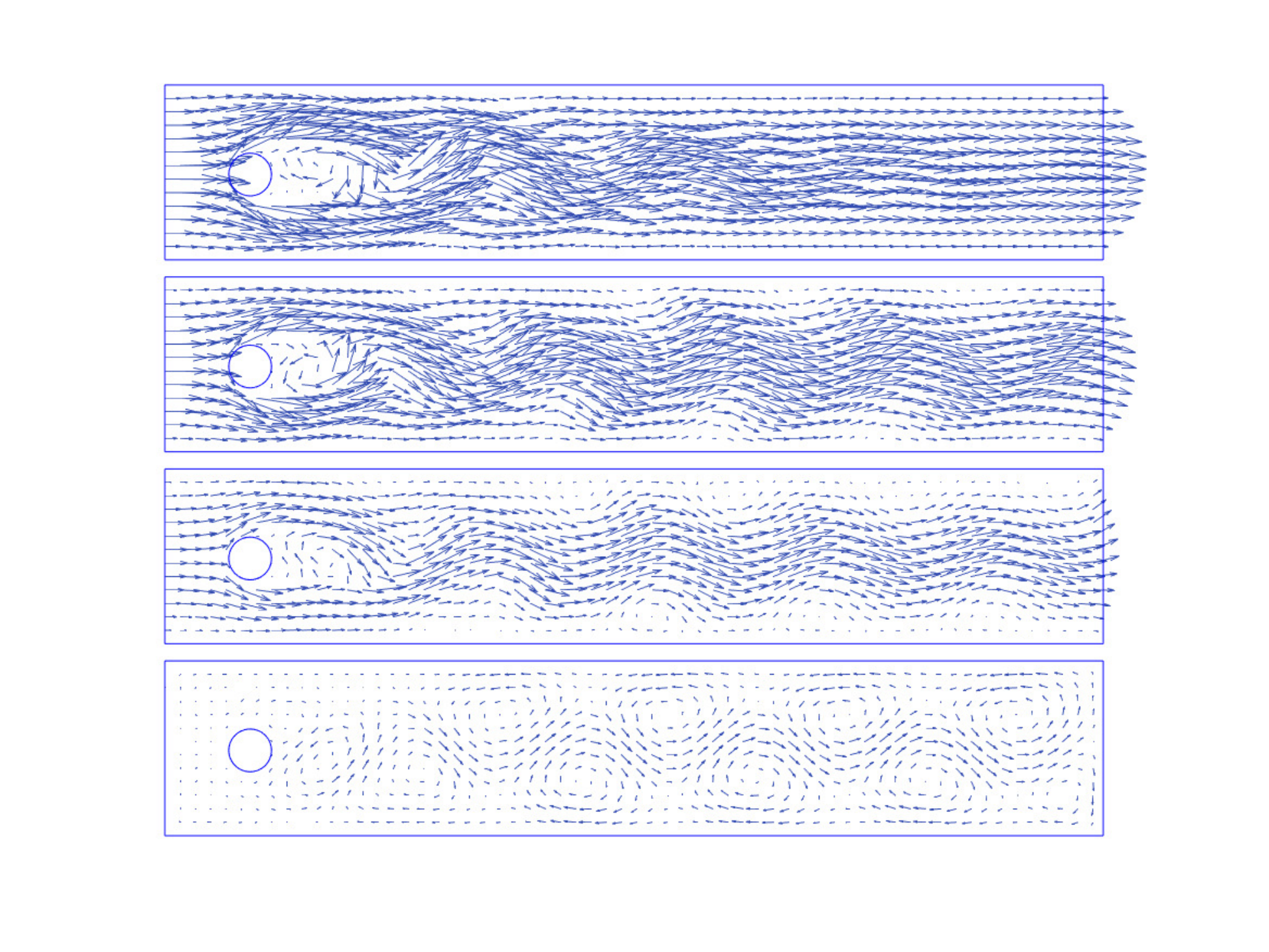}
\caption{Speed contours (left) and velocity field for times $t=5,6,7,8$ (from top to bottom).}
\label{speed}
\end{center}
\end{figure}

In~Fig.~\ref{fig:red} we show the mesh used in all the results below, which has 6624 elements whose diameters range from $5.53\times 10^{-3}$ to~$3.38\times 10^{-2}$. The number of degrees of freedom for the velocity on this mesh is
27168, and for for the pressure, 3480. Following experience in~\cite{proy_gdiv}, the grad-div parameter for the experiments in this section is set to~$\mu=0.01$.

In~Fig~\ref{fig:h_cyl} we show the step sizes used with tolerances $10^{-4}$, $10^{-6}$, and~$10^{-8}$. We notice that, 
for each tolerance, after the very small steps near~$t=0$, step sizes vary by a factor of at least ten, so that, as in the previous section, the algorithm uses variable steps sizes.  It can also be seen that the smaller the tolerance, the smaller the step sizes,
except for the IMEX method~\eqref{fully_discrete} with tolerances ~$10^{-4}$ and~$10^{-6}$ on the interval $[1.7,3.7]$ where step sizes are the same for both tolerances and clearly smaller than those of the semi-implicit method~\eqref{semimp} for the same tolerances. This suggest that, for these two tolerances, the step size in method~\eqref{fully_discrete} is limited by the CFL-type condition~\eqref{CFL2-u} rather than by the roughness of the solution, suggesting, once again, that restriction~\eqref{CFL2-u} is not a consequence of the techniques used in the proof of~Theorem~\ref{th:th2}.

\begin{figure}
\begin{center}
\includegraphics[height=2truecm]{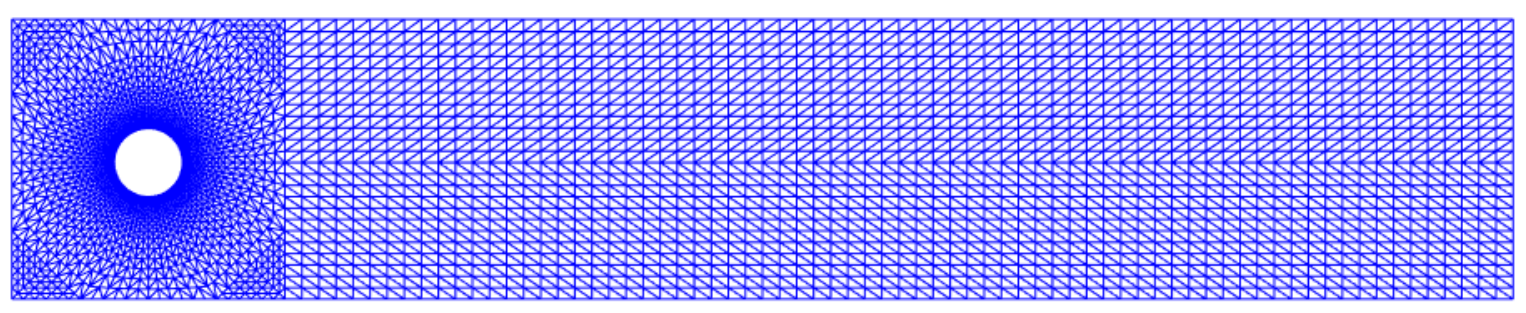}
\caption{The mesh used.}
\label{fig:red}
\end{center}
\end{figure}

\begin{figure}
\begin{center}
\includegraphics[height=3.5truecm]{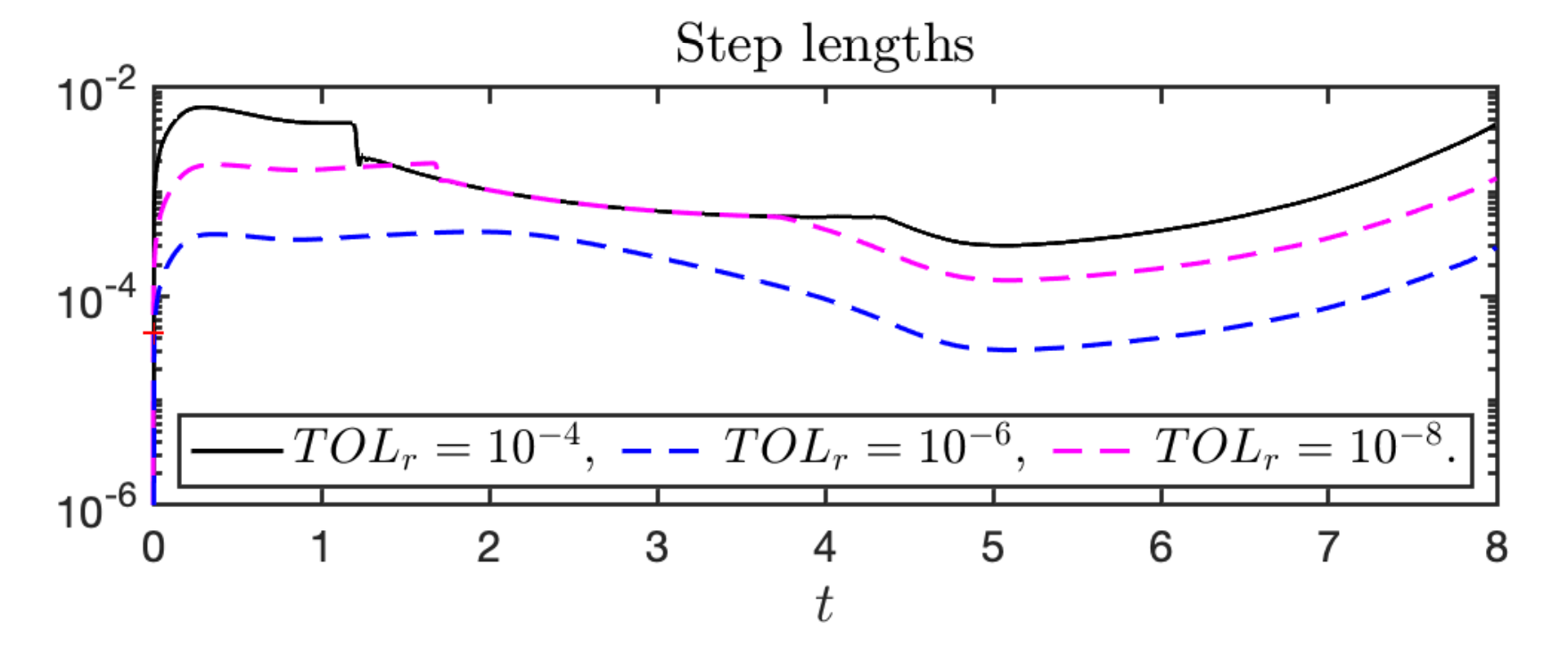}

\includegraphics[height=3.25truecm]{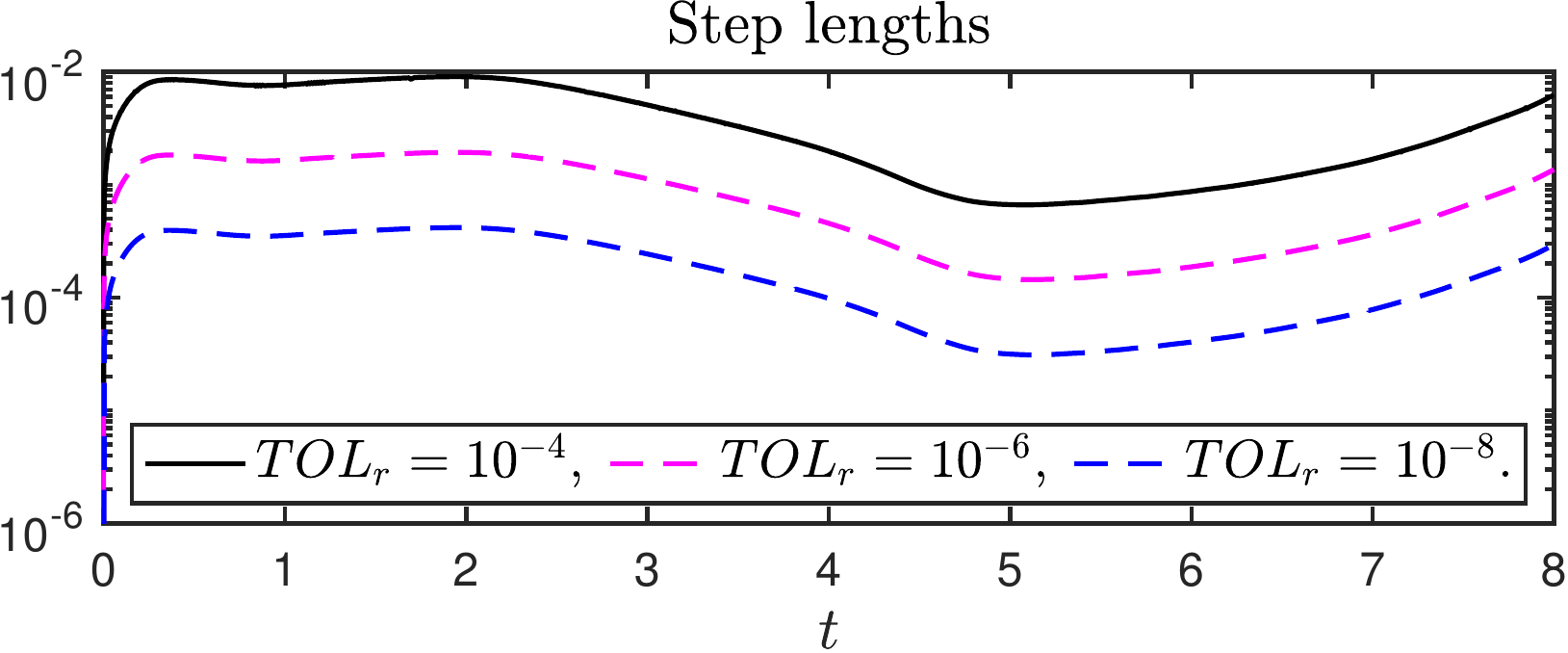}
\end{center}
\caption{Flow around a cylinder: step lengths for $TOLr=10^{-4}$, $10^{-6}$, and~$10^{-8}$. Top: IMEX method~\eqref{fully_discrete} bottom; semi-implicit method~\eqref{semimp}.}
\label{fig:h_cyl}
\end{figure}

\begin{figure}
\begin{center}
\includegraphics[height=5.3truecm]{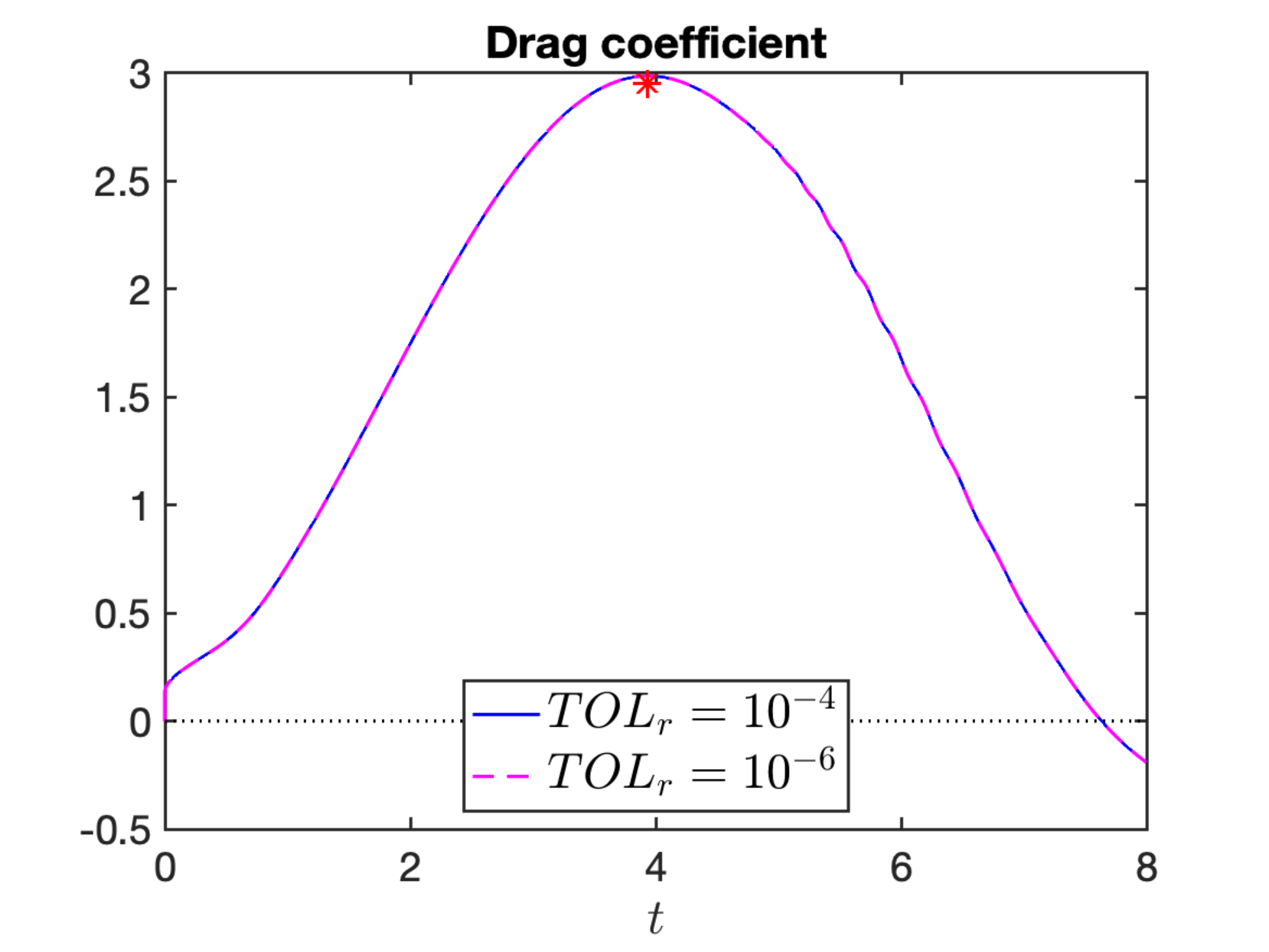} \quad 
\includegraphics[height=5.3truecm]{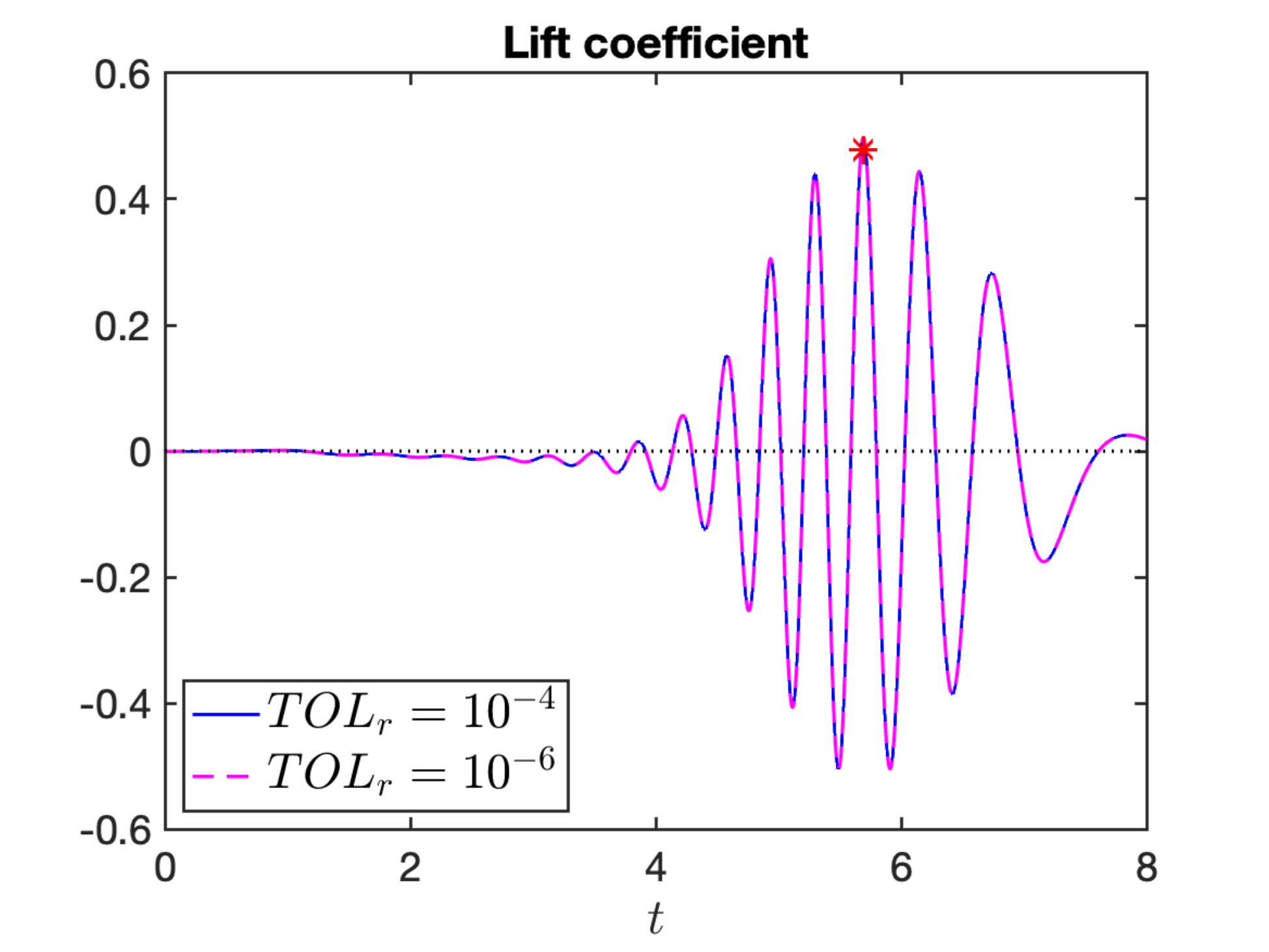}
\end{center}
\caption{Flow around a cylinder: drag and lift coefficients for $TOL_r=10^{-4}$, and~$10^{-6}$. Reference values for the maximum of the lift and drag coefficients marked with an asterisk.}
\label{fig:liftdrag}
\end{figure}

From the practical point of view, we may mention that the number of steps for the three tolerances, $10^{-4}$, $10^{-6}$, and~$10^{-8}$, were
$11221$,       $21187$  and $93671$, respectively, for the IMEX method, and 4384, 19885 and 92086 for the semi-implicit method,  but the number of factorizations were, respectively,
$67$,          $53$, and $92$ for the IMEX method and, 72, 84 and 89 for the semi-implicit method, low figures for both methods, in line with those of the previous section.

Quantities of practical  interest in this example are  the drag and lift coefficients, $c_d$ and~$c_l$, respectively. In Fig.~\ref{fig:liftdrag}, for tolerances $10^{-4}$ and~$10^{-6}$, we show the evolution of the drag and lift coefficients obtained with the IMEX method, which were computed with the following formulae, \cite{John_ref}, \cite{Volker_book}
\begin{align*}
c_d(t_n)&=-20\left( \left(\frac{1}{\Delta t_{n-1}}{\cal D}\bu_h^{n},\bv_d\right)+\nu(\nabla \bu_h^{n},\nabla\bv_d)+b(\bu_h^{n},\bu_h^{n},\bv_d) -(p_h^n(t),\nabla\cdot \bv_d)\right),
\nonumber\\
c_l(t_n)&=-20\left(\left(\frac{1}{\Delta t_{n-1}}{\cal D}\bu_h^{n},\bv_l\right)+ \nu(\nabla\bu_h^n,\nabla\bv_l)+b(\bu_h^{n},\bu_h^{n},\bv_l) -(p_h^n(t),\nabla\cdot \bv_l)\right),
\end{align*}
where $\bv_d$ and~$\bv_l$ are piecewise quadratic functions taking values~$\bv_d=[1,0]^T$ and~$\bv_l=[0,1]^T$ on those nodes on the circumference~$c\equiv (x-0.2)^2 + (y-0.2)^2=0.0025$, and vanishing on triangles without vertices on~$c$.

The results of both tolerances are indistinguishable, suggesting sufficient accuracy already for $TOL_r=10^{-4}$. We also show the reference values of the maximun values, $c_{d,\text{max}}^{\text{ref}}= 2.950921575$
and~$c_{l,\text{max}}^{\text{ref}}=0.47795$, plotted with a red asterisk at their corresponding times
$t_{d,\text{max}}^{\text{ref}}=3.93625$, and~$t_{l,\text{max}}^{\text{ref}}=5.693125$ (all taken from~\cite{John_ref}) showing a very good coincidence. In fact, the relative errors for the times where the maximum values are reached are in both cases below $0.0006$, and for the drag and lift coefficients are $0.012$ and~$0.043$, respectively. Furthermore, there is good agreement between the plots in~Fig.~\ref{fig:liftdrag} and those in~\cite[Fig.~4]{John_ref}. Finally, also of interest is the pressure difference between the front and the back of the cylinder, whose reference value for $t=8$ is, according to~\cite{John_ref}, $\Delta p_8 =-0.1116$. For the tolerance $10^{-4}$, the relative error in $\Delta p_8$ is
below $0.002$.

The results of these two sections suggest that, as predicted by the theory, the IMEX method~(\ref{fully_discrete}) is stable and convergent, but indeed suffers from the CFL-type restriction~\eqref{CFL2-u}.  On the other hand, the computational cost of the semi-implicit 
method~\eqref{semimp} is not 
much larger than the IMEX one, see Figure \ref{fig:steps-facts4}, and has the advantage of having no step restriction.

\section{Conclusions}
The numerical simulation of the Navier-Stokes equations is still a challenge in which there are several questions that deserve some research. Important aspects of the present paper are the following: 
\begin{itemize}	
\item We study the possibility of optimizing the cost of the temporal integration. To this end we consider two IMEX schemes for the incompressible Navier-Stokes equations, which reduce the cost of every time step, compared with an implicit method. In the first method the nonlinear term is explicit and in the second one is semi-implicit. 
\item We prove that using an explicit form for the nonlinear term a CFL type condition is required. This condition is stronger than the usual one in which $(\Delta t) h^{-1}$ has to be bounded (we required $(\Delta t) h^{-2}$ to be bounded). Our numerical experiments confirm  
experimentally that the CFL-type restriction cannot be weakened. 
 \item	Adaptive time stepping is an important tool in Computational Fluid Dynamics. In the present paper, we include the error analysis of a second order backward differentiation formulae (BDF2) scheme with variable time step both for an explicit and semi-implicit form of the nonlinear term. Concerning BDF2 method with variable time step we did not found in the literature any reference with the error analysis for the Navier-Stokes equations. 
\item	Methods for whom robust estimates can be derived enable stable flow simulations for small viscosity coefficients on comparatively coarse grids. Adding grad-div stabilization to the standard mixed finite element formulation we are able to prove error bounds with constants independent of the Reynolds number for the two methods we analized.
\item	In summary, to our knowledge, this is the first time in the literature variable step BDF2 methods are analyzed for the Navier-Stokes equations and also the first time in the literature robust bounds (independent of viscosity coefficient) are proved for Navier-Stokes with fully discrete methods using variable time-step. 
\end{itemize}

\bibliographystyle{abbrv}
\bibliography{references}

\begin{thebibliography}{10}

\bibitem{Adams}
R.~A. Adams and J.~J.~F. Fournier.
\newblock {\em Sobolev spaces}, volume 140 of {\em Pure and Applied Mathematics
  (Amsterdam)}.
\newblock Elsevier/Academic Press, Amsterdam, second edition, 2003.

\bibitem{becker}
J.~Becker.
\newblock A second order backward difference method with variable steps for a
  parabolic problem.
\newblock {\em BIT}, 38(4):644--662, 1998.

\bibitem{chenSiam}
H.~Chen.
\newblock Pointwise error estimates for finite element solutions of the
  {S}tokes problem.
\newblock {\em SIAM J. Numer. Anal.}, 44(1):1--28, 2006.

\bibitem{chinos_CH}
W.~Chen, X.~Wang, Y.~Yan, and Z.~Zhang.
\newblock A second order {BDF} numerical scheme with variable steps for the
  {C}ahn-{H}illiard equation.
\newblock {\em SIAM J. Numer. Anal.}, 57(1):495--525, 2019.

\bibitem{Cia78}
P.~G. Ciarlet.
\newblock {\em The finite element method for elliptic problems}.
\newblock North-Holland Publishing Co., Amsterdam-New York-Oxford, 1978.
\newblock Studies in Mathematics and its Applications, Vol. 4.

\bibitem{nos_oseen}
J.~de~Frutos, B.~Garc\'{\i}a-Archilla, V.~John, and J.~Novo.
\newblock Grad-div stabilization for the evolutionary {O}seen problem with
  inf-sup stable finite elements.
\newblock {\em J. Sci. Comput.}, 66(3):991--1024, 2016.

\bibitem{nos_grad_div}
J.~de~Frutos, B.~Garc\'{\i}a-Archilla, V.~John, and J.~Novo.
\newblock Analysis of the grad-div stabilization for the time-dependent
  {N}avier-{S}tokes equations with inf-sup stable finite elements.
\newblock {\em Adv. Comput. Math.}, 44(1):195--225, 2018.

\bibitem{proy_gdiv}
J.~de~Frutos, B.~Garc\'{\i}a-Archilla, and J.~Novo.
\newblock Fully discrete approximations to the time-dependent {N}avier-{S}tokes
  equations with a projection method in time and grad-div stabilization.
\newblock {\em J. Sci. Comput.}, 80(2):1330--1368, 2019.

\bibitem{imex_schneier}
V.~DeCaria and M.~Schneier.
\newblock An embedded variable step {IMEX} scheme for the incompressible
  {N}avier-{S}tokes equations.
\newblock {\em Comput. Methods Appl. Mech. Engrg.}, 376:113661, 26, 2021.

\bibitem{emmrich}
E.~Emmrich.
\newblock Stability and error of the variable two-step {BDF} for semilinear
  parabolic problems.
\newblock {\em J. Appl. Math. Comput.}, 19(1-2):33--55, 2005.

\bibitem{emmrich2}
E.~Emmrich.
\newblock Convergence of the variable two-step {BDF} time discretisation of
  nonlinear evolution problems governed by a monotone potential operator.
\newblock {\em BIT}, 49(2):297--323, 2009.

\bibitem{cmame_review}
B.~Garc\'{\i}a-Archilla, V.~John, and J.~Novo.
\newblock On the convergence order of the finite element error in the kinetic
  energy for high reynolds number incompressible flows.
\newblock {\em Comput. Methods Appl. Mech. Engrg.}, to appear.

\bibitem{girrav}
V.~Girault and P.-A. Raviart.
\newblock {\em Finite element methods for {N}avier-{S}tokes equations},
  volume~5 of {\em Springer Series in Computational Mathematics}.
\newblock Springer-Verlag, Berlin, 1986.
\newblock Theory and algorithms.

\bibitem{Hairer}
E.~Hairer and G.~Wanner.
\newblock {\em Solving ordinary differential equations. {II}}, volume~14 of
  {\em Springer Series in Computational Mathematics}.
\newblock Springer-Verlag, Berlin, 1991.
\newblock Stiff and differential-algebraic problems.

\bibitem{hey-ran4}
J.~G. Heywood and R.~Rannacher.
\newblock Finite-element approximation of the nonstationary {N}avier-{S}tokes
  problem. {IV}. {E}rror analysis for second-order time discretization.
\newblock {\em SIAM J. Numer. Anal.}, 27(2):353--384, 1990.

\bibitem{John_ref}
V.~John.
\newblock Reference values for drag and lift of a two-dimensional
  time-dependent flow around a cylinder.
\newblock {\em International Journal for Numerical Methods in Fluids},
  44(7):777--788, 2004.

\bibitem{Volker_book}
V.~John.
\newblock {\em Finite element methods for incompressible flow problems},
  volume~51 of {\em Springer Series in Computational Mathematics}.
\newblock Springer, Cham, 2016.

\bibitem{john_var}
V.~John and J.~Rang.
\newblock Adaptive time step control for the incompressible {N}avier-{S}tokes
  equations.
\newblock {\em Comput. Methods Appl. Mech. Engrg.}, 199(9-12):514--524, 2010.

\bibitem{bench}
M.~Sch{\"a}fer, S.~Turek, F.~Durst, E.~Krause, and R.~Rannacher.
\newblock {\em Benchmark Computations of Laminar Flow Around a Cylinder}, pages
  547--566.
\newblock Vieweg+Teubner Verlag, Wiesbaden, 1996.

\end{thebibliography}
\end{document}